\newtheorem{assumption}{Assumption}
\def\qed{ \ \vrule width.2cm height.2cm depth0cm\smallskip}
\newcommand{\la}{\langle}
\newcommand{\ra}{\rangle}
\newcommand{\dd}{\mathcal{\dagger}}
\newcommand{\ts}{\mathsf{T}}
\newcommand{\ba}{\begin{array}}
\newcommand{\ea}{\end{array}}
\newcommand{\be}{\begin{equation}}
\newcommand{\ee}{\end{equation}}
\newcommand{\bea}{\begin{eqnarray}}
\newcommand{\eea}{\end{eqnarray}}
\newcommand{\beaa}{\begin{eqnarray*}}
\newcommand{\eeaa}{\end{eqnarray*}}
\newcommand{\DKL}{\mathrm{D}_{\mathrm{KL}}}
\def\div{\mathbf{div}}
\def\Vol{\mathbf{Vol}}
\def\div{\mathbf{div}}
\def\cI{{\cal I}}
\def\cJ{{\cal J}}
\def\cK{{\cal K}}
\def\hM{\mathbb{M}}
\def\hR{\mathbb{R}}
\def\pa{\partial}
\def\cd{\cdot}
\def\qed{ \hfill \vrule width.25cm height.25cm depth0cm\smallskip}
\newcommand{\basa}{\begin{assumption}}
\newcommand{\easa}{\end{assumption}}
\newcommand{\bas}{\begin{assum}}
\newcommand{\eas}{\end{assum}}
\def\limP2{\,\mathop{\buildrel \Pi_2\over\longrightarrow\,}}
\def\pa{\partial}
 \def\cd{\cdot}
\def\1{{\bf 1}}
\def\:{\!:\!}
\begin{document}

\newtheorem{thm}{Theorem}[section]
\newtheorem{lem}[thm]{Lemma}
\newtheorem{nota}[thm]{Notation}
\newtheorem{cor}[thm]{Corollary}
\newtheorem{prop}[thm]{Proposition}
\newtheorem{rem}[thm]{Remark}
\newtheorem{eg}[thm]{Example}
\newtheorem{defn}[thm]{Definition}
\newtheorem{assum}[thm]{Assumption}

\renewcommand {\theequation}{\arabic{section}.\arabic{equation}}
\def\thesection{\arabic{section}}

\title{Sub-Riemannian Ricci curvature via generalized Gamma $z$ calculus}

\author{ Qi Feng\thanks{\noindent Department of
Mathematics, University of Michigan, Ann Arbor, 48109; email: qif@umich.edu}
~ and ~ Wuchen Li\thanks{ \noindent Department of
Mathematics, University of South Carolina, 29208;
email: wuchen@mailbox.sc.edu.} 
}

\maketitle
\abstract{We derive sub-Riemannian Ricci curvature tensor for sub-Riemannian manifolds. We provide examples including the Heisenberg group, displacement group, and Martinet sub-Riemannian structure with arbitrary weighted volumes, in which we establish analytical bound conditions for sub-Riemannian curvature dimension bounds and log-Sobolev inequalities. {These conditions can be used to establish the entropy dissipation results for sub-Riemannian drift diffusion processes on a compact spatial domain, in term of $L_1$ distance.} Our derivation of Ricci curvature is based on generalized Gamma $z$ calculus and $z$--Bochner's formula, where $z$ stands for extra directions introduced into the sub-Riemannian degenerate structure.}

\noindent\textbf{Keywords}: {sub-Riemannian Ricci curvature; Generalized Gamma $z$ calculus; Heisenberg group; Displacement group; Martinet sub-Riemannian structure.}

\noindent\textbf{Mathematics Subject Classification}: 53C17, 46N30.

\noindent\textbf{Thanks}: {Wuchen Li is supported by AFOSR MURI FA9550-18-1-0502.}
\section{Introduction} 
In Riemannian geometry, Ricci curvature plays essential roles in probability, geometric analysis, and functional inequalities \cite{bakryemery1985}.  Here, the lower bound, often related to the curvature dimension bound, plays crucial roles in studying convergence rate of drift-diffusion process and establishing the concentration inequalities, especially log--Sobolev inequalities. Here the major limitation for curvature dimension bound is that the metric is of Riemannian type, where the metric tensor needs to be positive definite. 

Nowadays, sub-Riemannian geometry is of great interest in Lie group, geometric analysis, optimal control, and engineering communities. Here the sub-Riemannian structure refers to the fact that the metric on the sub-bundle is degenerate. In other words, the metric is only semi-positive definite, see \cite{BaudoinGarofalo09, BBG, baudoinwang2012, Baudoin2017, BGK, baudoin2019gamma, GL2016, jungel2016entropy, agrachev2009optimal, Feng, KL, barilari2019bakry}
and many references therein. This degeneration structure brings many difficulties in the study of concentration inequalities. In these settings, the classical curvature dimension bound often does not exist. A natural question arises. {\em Does there exist a sub-Riemannian Ricci curvature tensor and its lower bound?} 

To answer these questions, Gamma calculus, also named Bakry--{\'E}mery iterative calculus, are powerful methods in deriving Ricci curvature tensor and curvature dimension bound. In Riemannian settings, the calculus provides analytical ways to compute the curvature dimension bound. However, the classical Gamma calculus relies on the fact that the metric is positive definite, which does not cover the sub-Riemannian cases.  To handle this degeneracy issue, studies in \cite{ baudoin2016wasserstein,BaudoinGarofalo09, Baudoin2017} propose Gamma $z$ calculus, where $z$ represents given extra directions. This method derives the sub-Riemannian curvature dimension bound. However, the current method still requires a sub-Riemannian structure with a special choice of $z$, and satisfying commutative iteration properties. Besides, the method requires the fact that the sub-Riemannian structure is restricted up to the step $2$ condition. 

In this paper, following the generalized Gamma $z$ calculus in \cite{FL}, we present a generalized sub-Riemannian Ricci curvature and curvature dimension bound. This method transfers the commutative iteration condition into a new quantity in Gamma $z$ calculus (see formula \eqref{Gz2}). In a compact region, our method allows us to establish the analytical bound for sub-Riemannian log--Sobolev inequalities. More concretely, we formulate analytical curvature tensor for the Heisenberg group, displacement group ($\textbf{SE}(2)$) and Martinet flat sub-Riemannian structure with general weighted volumes. {These curvature tensor bounds implies the entropy dissipation results for sub-Riemannian drift diffusion processes on a compact spatial domain, in term of $L_1$ distance.} 

In literature, a weighted Ricci curvature tensor has also been studied in sub-Riemannian manifold \cite{barilari2019bakry}. Here \cite{barilari2019bakry} introduces the other generalization of Bakry-{\'E}mery curvature tensor using the associated Riccati equation, following which they prove sub-Riemannian comparison theorems. Compared to them, our method generalizes the Gamma calculus based on sub-Riemannian Laplacian operator, following which we prove log-Sobolev and Poincar{\'e} inequalities.  {Notice that, the log-Sobolev inequality on sub-Riemannian manifolds satisfying generalized curvature dimension inequality was established in \cite{baudoin2012log} by using the Gamma-z calculus in \cite{BaudoinGarofalo09}. However, this is based on the assumption of the commutative property of the Gamma ($\Gamma$) and Gamma-z ($\Gamma^z$) operator, i.e. transverse symmetry property of the sub-Riemannian manifolds, and the convergence was established in terms of the semi-group associated with the horizontal Laplace operator. A generalized version of log-Sobolev inequality for infinite dimensional Heisenberg group 
has been proved in \cite{inglis2009logarithmic} in the sense of coercive inequalities \cite{hebisch2009coercive}. Comparing to \cite{baudoin2012log, inglis2009logarithmic}, we can compute the precise log-Sobolev inequality constant. Furthermore, our method also implies the entropy dissipation rate of the transition density associated with the sub-Riemannian drift-diffusion process in $L_1$ distance. To our best knowledge, this is the first time of establishing entropy dissipation for sub-Riemannian drift--diffusions, which extends the results in Riemannian manifolds \cite{MV,OV}.}
\subsection{Main results}
We sketch one of our main results for the Martinet sub-Riemannian structure. Here the sub-Riemannian structure is defined on $\mathbb R^3$ through the kernel of one-form $\eta :=dz-\frac{1}{2}y^2dx.$ A global orthonormal basis for the horizontal distribution $\mathcal{H}$ adapt the following differential operator representation, in local coordinates $(x,y,z)$,  
\[
X=\frac{\pa}{\pa x}+\frac{y^2}{2}\frac{\pa}{\pa z},\quad Y=\frac{\pa}{\pa y}.
\]
The commutative relation gives
\beaa
[X,Y]=-yZ,\quad [Y,[X,Y]]=-Z,\quad \text{where}\quad Z=\frac{\pa}{\pa z}.
\eeaa
Here we note the horizontal and vertical direction by
\beaa
a^T=\begin{pmatrix}
    1&0&\frac{y^2}{2}\\
    0&1&0
\end{pmatrix},\quad z^{\ts}=(0,0,1)
\eeaa
Associated with this group, we consider the drift--diffusion process whose generator is given by
\begin{equation*}
Lf=\nabla\cdot(aa^{\ts}\nabla f)-\la a\otimes \nabla a,\nabla f\ra-\la \nabla V, aa^{\ts}\nabla f\ra,
\end{equation*}
where $V\in C^{\infty}(\mathbb{R}^3)$ is a given potential function and $a\otimes\nabla a$ is defined in section \ref{section2}. Here generator $L$ induces an invariant measure associated with probability density function by $\rho^*$. 
In this paper, we shall introduce a sub-Riemannian curvature tensor $\mathfrak{R}=\mathfrak{R}_{ab}^G+\mathfrak{R}_{zb}+\mathfrak{R}_{\rho^*}$; see details in section \ref{section3}. We show that when there exists a constant $\kappa>0$, such that
\beaa
\mathfrak{R}(\nabla f, \nabla f)\geq \kappa\la \nabla f, (aa^{\ts}+zz^{\ts})\nabla f\ra,\quad \textrm{for any $f\in C^{\infty}(\mathbb{R}^3)$},
\eeaa
then the following sub-Riemannian log--Sobolev inequality holds 
\beaa
\int \rho\log\frac{\rho}{\rho^*}dx\leq \frac{1}{2\kappa}\int \Big(\nabla\log\frac{\rho}{\rho^*}. (aa^{\ts}+zz^{\ts})\nabla\log\frac{\rho}{\rho^*}\Big)\rho dx,
\eeaa 
And the entropy dissipation result is established. In particular, 
{
    \begin{equation*}
 \|\rho(t,x)-\rho^*(x)\|_{L^1(\mathbb{R}^{n+m})}\leq \sqrt{\frac{1}{\kappa}\mathcal{I}_{a,z}(\rho_0\|\rho^*) }e^{-\kappa t},
\end{equation*}
where $\rho(t,x)$ is the probability density function for the sub-Riemannian diffusion process. 
}
We derive the algebraic condition for $\kappa$. Let 
\begin{equation*}
\kappa=\lambda_{\min}(\mathsf{A}),    
\end{equation*}
where matrix $\mathsf{A}=(\mathsf{A}_{ij})_{1\leq i,j\leq 3}\in\mathbb{R}^{3\times 3}$ has the following form
\beaa
\mathsf{A}_{11}&=&\Big(\frac{\pa^2 V}{\pa x\pa x}+y^2\frac{\pa^2 V}{\pa x\pa z}+\frac{y^4}{4}\frac{\pa^2 V}{\pa z\pa z} \Big)-y^2;\\
\mathsf{A}_{22}&=&\frac{\pa^2 V}{\pa y\pa y}-y^2;\quad \mathsf{A}_{33}=\frac{y^2}{2};  \\
\mathsf{A}_{12}&=&\mathsf{A}_{21}=\frac{y}{2}\frac{\pa V}{\pa z}+(\frac{\pa^2 V}{\pa x\pa y}+\frac{y^2}{2}\frac{\pa^2 V}{\pa y\pa z});\\
\mathsf{A}_{13}&=&\mathsf{A}_{31}=\frac{1}{2}-\frac{y}{2}\frac{\pa V}{\pa y}+\frac{1}{2}(\frac{\pa^2 V}{\pa x\pa z}+\frac{y^2}{2}\frac{\pa^2 V}{\pa z\pa z});\\
\mathsf{A}_{23}&=&\mathsf{A}_{32}=\frac{1}{2}y(a^{\ts}\nabla)_1V+\frac{1}{2}\frac{\pa^2 V}{\pa y\pa z}.
\eeaa
There exists a compact region in $\mathbb{R}^3$ with $\kappa=\lambda_{\min}(A)>0$. The extension of such a lower bound to the entire space and the explicit convergence rate analysis will be left for future studies.  

This paper is organized as follows. In section \ref{section2}, we briefly review the generalized Gamma $z$ calculus and its derivation by Lyapunov methods in density space. In section \ref{section3}, we briefly recall the analytical formulas of sub-Riemannian Ricci curvature by generalized Gamma $z$ calculus from \cite{FL}.  In section \ref{section4}, we present the main result of this paper, which includes several concrete examples, including the Heisenberg group, displacement group, and Martinet sub-Riemannian structure with any weighted volumes. We leave the technical proofs in the appendices. 

\section{Generalized Gamma $z$ calculus and entropy dissipation}\label{section2}
We briefly review the generalized Gamma $z$ calculus proposed in \cite{FL}.  Here we also review its derivation by Lyapunov methods in density space in subsection \ref{section2.1}, known as the entropy dissipation method. {We apply this entropy dissipation methods to derive various decay results, epsecially for the $L^1$ distance.}

Consider a degenerate drift diffusion process
\begin{equation}\label{SDE framework1}
dX_t=-a(X_t)a(X_t)^{\ts}\nabla V(X_t)dt+\sqrt{2}a(X_t)\circ dB_t,
\end{equation}
where $n, m\in Z_+$, $a\in C^{\infty}(\mathbb{R}^{(n+m)\times n})$ is a matrix function, $V\in C^{\infty}(\mathbb{R}^{n+m})$ is a vector function. Here $B_t$ is the standard Brownian motion in $\mathbb{R}^{n}$ and $\circ$ represents the Stratonovich integral of Brownian motion.  

 We notice that the choice of matrix $a$ is based on the sub--Riemannian metric for Euclidean space $(\hM=\mathbb{R}^{n+m}, ({aa^{\ts}})^\dd)$, where $\dd$ is the pseudo inverse operator. Here $a=(a_1,a_2,\cdots,a_n)$ with each $a_i, i=1\cdots,n$, as a $n+m$-dimensional column vector. {In particular, we denote $aa^{\ts}\nabla f$ (resp. $zz^{\ts}\nabla f$) as the horizontal (resp. vertical) gradient of function $f:\hR^{n+m}\rightarrow \hR$ in terms of Euclidean gradient $\nabla$ in $\hR^{n+m}.$ (See more details in \cite{FL}[Definition 2.5]).} We notice that when $a$ is an invertiable squared matrix, i.e. $m=0$ and vectors $a_1,\cdots a_n$ are linearly independent. Then $(\hM, (aa^{\ts})^{-1)})$ is a Riemannian manifold and  \eqref{SDE framework1} corresponds to the associated Riemannian drift diffusion process. And, for general degenerate matrix $a$, SDE \eqref{SDE framework1} can be viewed as a sub--Riemannian drift diffusion process. 
 
Throughout this paper, we assume that $\{a_1,a_2,\cdots,a_n\}$ satisfies the strong H\"ormander condition or bracket generating condition. Then from sub--Riemannian theory, there exists a unique and smooth solution for the density function of a process $X_t$ in \eqref{SDE framework1}. Our goal here is to study the convergence behavior of the drift-diffusion process \eqref{SDE framework1}.

We first review some background results for the invariant measure of SDE \eqref{SDE framework1}. 
\begin{lem}[Invariant measure]
Suppose SDE \eqref{SDE framework1} with $V=0$ is associated with a unique smooth symmetric invariant measure, then there exists a function $\Vol\in C^{\infty}(\mathbb{R}^{n+m})$, such that
\beaa 
a\otimes \nabla a=-aa^{\ts}\nabla \log \Vol.
\eeaa
Assume that the SDE \eqref{SDE framework1} has a smooth invariant probability measure $\rho^*\in C^{\infty}(\mathbb{R}^{n+m})$, then 
\begin{equation*}
\rho^*=\frac{1}{Z}e^{-V}\Vol,
\end{equation*}
where $Z$ is a normalization constant such that $Z=\int_{\mathbb{R}^{n+m}}e^{-V}\Vol dx<\infty$. 
\end{lem}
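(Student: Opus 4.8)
The plan is to reduce everything to the observation that the generator of \eqref{SDE framework1}, once written in weighted divergence form, carries no residual first-order drift precisely when the asserted identity holds. First I would pass from the Stratonovich equation \eqref{SDE framework1} to its It\^o form; the It\^o--Stratonovich correction is by construction $a\otimes\nabla a$ (this is its definition in Section~\ref{section2}), so the generator is the operator
\[
Lf=\nabla\cdot(aa^{\ts}\nabla f)-\langle a\otimes\nabla a,\nabla f\rangle-\langle\nabla V,aa^{\ts}\nabla f\rangle
\]
from the Introduction; write $L_0$ for its $V=0$ version. Since $\{a_1,\dots,a_n\}$ is bracket generating, $L_0$ and its formal adjoint are hypoelliptic, so the assumed symmetric invariant measure has a smooth, strictly positive density $\Vol$, whence $\log\Vol\in C^{\infty}(\mathbb{R}^{n+m})$.

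For the $V=0$ statement I would use the elementary identity $\frac{1}{\Vol}\nabla\cdot(\Vol\,aa^{\ts}\nabla f)=\nabla\cdot(aa^{\ts}\nabla f)+\langle aa^{\ts}\nabla\log\Vol,\nabla f\rangle$ to split
\[
L_0 f=\frac{1}{\Vol}\nabla\cdot(\Vol\,aa^{\ts}\nabla f)+\langle b,\nabla f\rangle,\qquad b:=-\,a\otimes\nabla a-aa^{\ts}\nabla\log\Vol .
\]
The weighted divergence operator is symmetric on $L^{2}(\Vol\,dx)$ by integration by parts, so symmetry (reversibility) of $L_0$ forces $f\mapsto\langle b,\nabla f\rangle$ to be symmetric there as well; but a genuine first-order operator is symmetric on $L^{2}(\Vol\,dx)$ only if $b\equiv0$: its adjoint sends $g$ to $-\langle b,\nabla g\rangle-\Vol^{-1}\nabla\cdot(b\,\Vol)\,g$, so self-adjointness would force $2\langle b,\nabla f\rangle=-\Vol^{-1}\nabla\cdot(b\,\Vol)\,f$ for all $f$, which is impossible for $b\not\equiv0$ by comparing differential orders. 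This is exactly $a\otimes\nabla a=-aa^{\ts}\nabla\log\Vol$.

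For general $V$ I would substitute this identity back into $L$: since $\nabla\log\rho^{*}=\nabla\log\Vol-\nabla V$ for $\rho^{*}=\frac{1}{Z}e^{-V}\Vol$, a one-line computation gives $Lf=\nabla\cdot(aa^{\ts}\nabla f)+\langle aa^{\ts}\nabla\log\rho^{*},\nabla f\rangle=\frac{1}{\rho^{*}}\nabla\cdot(\rho^{*}\,aa^{\ts}\nabla f)$, so $L$ is symmetric on $L^{2}(\rho^{*}\,dx)$. Hence, provided $Z=\int_{\mathbb{R}^{n+m}}e^{-V}\Vol\,dx<\infty$, the measure $\frac{1}{Z}e^{-V}\Vol\,dx$ is invariant (indeed reversible); by hypoellipticity the invariant probability measure is unique, so it coincides with the $\rho^{*}$ assumed in the statement. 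The only non-routine input is the smoothness and positivity of $\Vol$, which rests on the strong H\"ormander condition; the one mildly delicate algebraic step is the order-counting argument forcing $b\equiv0$, and everything else is integration by parts.
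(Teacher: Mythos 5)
Your proof is correct, and it supplies an argument where the paper gives none: the lemma is stated without proof, and the identities $a\otimes\nabla a=-aa^{\ts}\nabla\log\Vol$ and $\rho^*=\frac{1}{Z}e^{-V}\Vol$ are simply invoked later when the Fokker--Planck equation is rewritten as $\partial_t\rho=\nabla\cdot(\rho\,aa^{\ts}\nabla\log\frac{\rho}{\rho^*})$. Your route --- splitting $L_0$ into the $\Vol$-weighted divergence operator plus a residual drift $\langle b,\nabla f\rangle$, using reversibility to force $b\equiv 0$ via the order-counting argument, and then checking that the full generator becomes $\frac{1}{\rho^*}\nabla\cdot(\rho^*aa^{\ts}\nabla f)$ --- is exactly the generator-side dual of the paper's implicit Fokker--Planck reasoning, and all the computations check out (the adjoint formula for the first-order part, the cancellation $-\langle a\otimes\nabla a,\nabla f\rangle-\langle\nabla V,aa^{\ts}\nabla f\rangle=+\langle aa^{\ts}\nabla\log\rho^*,\nabla f\rangle$, and the appeal to uniqueness to identify the assumed invariant probability measure with $\frac{1}{Z}e^{-V}\Vol$). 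Two small points of hygiene: the parenthetical claim that $a\otimes\nabla a$ \emph{is} the It\^o--Stratonovich correction is not quite right --- the correction term is $\sum_{j,k}a_{jk}\partial_j a_{ik}$, and $a\otimes\nabla a$ is rather the discrepancy between $\nabla\cdot(aa^{\ts}\nabla f)$ and the Stratonovich generator, so that $L$ as written is the generator; since you only ever use $L$ as the paper defines it, this does not affect the argument. And you should state explicitly that strict positivity of $\Vol$ (needed for $\log\Vol$ and for dividing by $\Vol$ in the adjoint computation) comes from hypoellipticity together with irreducibility under the strong H\"ormander condition, which you gesture at but do not isolate.
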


We next present the iterative Gamma $z$ calculus for the convergence behavior of SDE \eqref{SDE framework1}. To do so, we denote the generator $L$ of sub-Riemannian drift--diffusion process by
\begin{equation*}
Lf=\nabla\cdot(aa^{\ts}\nabla f)-\la a\otimes \nabla a,\nabla f\ra_{\mathbb{R}^{n+m}}-\la \nabla V, aa^{\ts}\nabla f\ra_{\mathbb{R}^{n+m}},
\end{equation*}
where $f\in C^{\infty}(\mathbb{R}^{n+m})$ and 
$$a\otimes \nabla a= \Big((a\otimes \nabla a)_{\hat k}\Big)_{\hat k=1}^{n+m} =\Big(\sum_{k=1}^n\sum_{k'=1}^{n+m}a_{\hat kk}\frac{\partial}{\partial x_{k'}}a_{k'k}\Big)_{\hat k=1}^{n+m}\in \mathbb{R}^{n+m}.$$ 
\begin{defn}[Generalized Gamma $z$ calculus]\label{defn:tilde gamma 2 znew} 
Construct a smooth matrix function $z\in \mathbb{R}^{(n+m)\times m}$. Denote Gamma one bilinear forms $\Gamma_{1},\Gamma_{1}^z\colon C^{\infty}(\mathbb{R}^{n+m})\times C^{\infty}(\mathbb{R}^{n+m})\rightarrow C^{\infty}(\mathbb{R}^{n+m}) $ as 
\begin{equation*}
\Gamma_{1}(f,g)=\la a^{\ts}\nabla f, a^{\ts}\nabla g\ra_{\hR^n},\quad \Gamma_{1}^z(f,g)=\la z^{\ts}\nabla f, z^{\ts}\nabla g\ra_{\hR^m}.
\end{equation*}
Define Gamma two bilinear forms $\Gamma_{2},\Gamma_2^{z,\rho^*}\colon C^{\infty}(\mathbb{R}^{n+m})\times C^{\infty}(\mathbb{R}^{n+m})\rightarrow C^{\infty}(\mathbb{R}^{n+m}) $ by
\begin{equation*}
\Gamma_{2}(f,g)=\frac{1}{2}\Big[L\Gamma_{1}(f,g)-\Gamma_{1}(Lf, g)-\Gamma_{1}(f, Lg)\Big],    
\end{equation*}
and 
\bea
\label{old gamma two z}
\Gamma_2^{z,\rho^*}(f,g)&=&\quad\frac{1}{2}\Big[L\Gamma_1^{z}(f,g)-\Gamma_{1}^z(Lf,g)-\Gamma_{1}^z(f, Lg)\label{Gz1}\Big]\\
&& \label{new term}+\div^{{\rho^*}}_z\Big(\Gamma_{1,\nabla(aa^{\ts})}(f,g )\Big)-\div^{{\rho^*} }_a\Big(\Gamma_{1,\nabla(zz^{\ts})}(f,g )\Big)\label{Gz2}.
  \eea 
Here $\div^{{\rho^*} }_a$, $\div^{{\rho^*} }_z$ are divergence operators defined by: 
\begin{equation*}
\div^{{\rho^*}}_a(F)=\frac{1}{{\rho^*} }\nabla\cdot({\rho^*} aa^{\ts} F), \quad\div^{{\rho^*} }_z(F)=\frac{1}{{\rho^*} }\nabla\cdot({\rho^*} zz^{\ts}F),
\end{equation*}
for any smooth vector field $F\in \mathbb{R}^{n+m}$, and $\Gamma_{1, \nabla (aa^{\ts})}$, $\Gamma_{1, \nabla (zz^{\ts})}$ are vector Gamma one bilinear forms defined by 
\beaa
\Gamma_{1,\nabla(aa^{\ts)}}(f,g)&=&\la \nabla f,\nabla(aa^{\ts})\nabla g\ra=(\la \nabla f,\frac{\partial}{\partial x_{\hat k}}(aa^{\ts})\nabla g\ra)_{\hat k=1}^{n+m},\\
\Gamma_{1,\nabla(zz^{\ts)}}(f,g)&=&\la \nabla f,\nabla(aa^{\ts})\nabla g\ra=(\la \nabla f,\frac{\partial}{\partial x_{\hat k}}(zz^{\ts})\nabla g\ra)_{\hat k=1}^{n+m},
\eeaa
with 
    \beaa
\div^{{\rho^*} }_z\Big(\Gamma_{\nabla(aa^{\ts})}f,g \Big)&=&\frac{\nabla\cdot(zz^{\ts}{\rho^*} \la \nabla f,\nabla(aa^{\ts})\nabla g\ra ) }{{\rho^*} },\\
\div^{{\rho^*} }_a\Big(\Gamma_{\nabla(zz^{\ts})}f,g \Big)&=&\frac{\nabla\cdot(aa^{\ts}{\rho^*}  \la \nabla f,\nabla(zz^{\ts})\nabla g\ra) }{{\rho^*} }.
    \eeaa
\end{defn}

Given the generalized Gamma $z$ calculus, we are ready to derive the log-Sobolev inequality in sub-Riemannian manifold. Denote the Kullback--Leibler divergence by 
\begin{equation*}
\mathrm{D}_{\mathrm{KL}}(\rho\|\rho^*)=\int_{\mathbb{R}^{n+m}}\rho\log\frac{\rho}{\rho^*}dx, 
\end{equation*}
and the $a,z$--Fisher information functional 
\begin{equation*}
\mathrm{I}_{a,z}(\rho\|\rho^*)=\int_{\mathbb{R}^{n+m}}\Big(\nabla\log\frac{\rho}{\rho^*}, (aa^{\ts}+zz^{\ts})\nabla\log\frac{\rho}{\rho^*}\Big)\rho dx.
\end{equation*}
{In particular, $\mathrm{D}_{\mathrm{KL}}(\rho\|\rho^*)$ and $\mathrm{I}_{a,z}(\rho\|\rho^*)$ vanish as $\rho=\rho^*$.}
\begin{prop}[z--log--Sobolev inequalities]\label{prop1.4}
Suppose there exists a constant $\kappa>0$, such that 
\begin{equation}\label{z-log}
\Gamma_2( f,f)+\Gamma_2^{z,\rho^*}(f, f)\succeq \kappa (\Gamma_1(f, f)+\Gamma_1^z(f,f)),\quad \textrm{for any $f\in C^{\infty}(\mathbb{R}^{n+m})$}.
\end{equation}
Then the z-log-Sobolev inequalities (zLSI) holds: For any smooth density $\rho$, then
\begin{equation*}
\mathrm{D}_{\mathrm{KL}}(\rho\|\rho^*) \leq \frac{1}{2\kappa}\mathrm{I}_{a,z}(\rho\|\rho^*)\qquad(\textrm{zLSI})
\end{equation*} 
\end{prop}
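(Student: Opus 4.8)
The plan is to use the entropy dissipation (Lyapunov) method along the Fokker--Planck flow associated with the generator $L$. First I would let $\rho_t$ solve the forward equation $\partial_t \rho_t = L^* \rho_t$ (equivalently, $\rho_t$ is the law of $X_t$ from \eqref{SDE framework1}), normalized so that $\rho^*$ is the stationary density. Writing $h_t = \rho_t/\rho^*$, I would first record the standard dissipation identity for the KL divergence: $\frac{d}{dt}\mathrm{D}_{\mathrm{KL}}(\rho_t\|\rho^*) = -\int \Gamma_1(\log h_t, \log h_t)\,\rho_t\,dx$, which follows from integration by parts against the invariant measure. The key point is that this only sees the horizontal Fisher information $\int \Gamma_1(\log h,\log h)\rho\,dx$, not the full $\mathrm{I}_{a,z}$; so a naive one-step argument cannot close.

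Next I would compute the \emph{second}-order dissipation, i.e. $\frac{d}{dt}\int \big(\Gamma_1(\log h_t,\log h_t)+\Gamma_1^{z,\rho^*}\text{-type terms}\big)\rho_t\,dx$, which is where the Gamma two and Gamma two $z$ operators enter. The generalized $z$--Bochner formula / integration-by-parts identity from \cite{FL} should give
\[
\frac{d}{dt}\,\mathrm{I}_{a,z}(\rho_t\|\rho^*) = -2\int \Big(\Gamma_2(\log h_t,\log h_t)+\Gamma_2^{z,\rho^*}(\log h_t,\log h_t)\Big)\rho_t\,dx + (\text{lower order / sign-definite remainder}),
\]
with the divergence correction terms in \eqref{Gz2} being exactly what makes the vertical contribution integrate by parts correctly against $\rho^*$. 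Invoking the hypothesis \eqref{z-log}, the right-hand side is $\le -2\kappa\,\mathrm{I}_{a,z}(\rho_t\|\rho^*)$, so $\mathrm{I}_{a,z}(\rho_t\|\rho^*) \le e^{-2\kappa t}\mathrm{I}_{a,z}(\rho_0\|\rho^*)$ decays exponentially and in particular tends to $0$.

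Finally I would integrate the first-order identity from time $0$ to $\infty$: since $\rho_t \to \rho^*$ (so $\mathrm{D}_{\mathrm{KL}}(\rho_t\|\rho^*)\to 0$ by the H\"ormander hypothesis guaranteeing convergence), one gets
\[
\mathrm{D}_{\mathrm{KL}}(\rho_0\|\rho^*) = \int_0^\infty \int \Gamma_1(\log h_t,\log h_t)\,\rho_t\,dx\,dt \le \int_0^\infty \mathrm{I}_{a,z}(\rho_t\|\rho^*)\,dt \le \mathrm{I}_{a,z}(\rho_0\|\rho^*)\int_0^\infty e^{-2\kappa t}\,dt = \frac{1}{2\kappa}\mathrm{I}_{a,z}(\rho_0\|\rho^*),
\]
which is exactly (zLSI) with $\rho_0 = \rho$. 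I expect the main obstacle to be \emph{rigor of the entropy dissipation argument in the degenerate (hypoelliptic) setting}: justifying the time-derivative identities (smoothness and integrability of $\rho_t$, $\log h_t$, and their derivatives, boundary terms at infinity vanishing) requires the H\"ormander/bracket-generating hypothesis and a priori regularity of the hypoelliptic semigroup, rather than the elliptic estimates one would use classically. A secondary technical point is bookkeeping the divergence-correction terms in $\Gamma_2^{z,\rho^*}$ so that the second-order computation indeed produces $-2\big(\Gamma_2+\Gamma_2^{z,\rho^*}\big)$ with no stray terms of indefinite sign; this is precisely the content of the generalized $z$--Bochner formula cited from \cite{FL}, so modulo that input the algebra should go through.
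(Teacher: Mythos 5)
Your proposal is correct and follows essentially the same route as the paper: the identity $\frac{d}{dt}\mathrm{I}_{a,z}(\rho_t)=-2\int(\Gamma_2+\Gamma_2^{z,\rho^*})(\delta\mathrm{D}_{\mathrm{KL}},\delta\mathrm{D}_{\mathrm{KL}})\rho_t\,dx$ (the paper's Proposition \ref{prop1}, with no remainder term --- the divergence corrections in \eqref{Gz2} are exactly what makes it exact), then Gronwall to get exponential decay of $\mathrm{I}_{a,z}$, then integration of $-\frac{d}{dt}\mathrm{D}_{\mathrm{KL}}=\mathrm{I}_a\leq\mathrm{I}_{a,z}$ over $[0,\infty)$. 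The paper rearranges the final integration slightly differently (bounding $\int_0^\infty\mathrm{I}_{a,z}\,dt$ from both sides rather than inserting the explicit exponential bound), but this is the same argument.
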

\begin{rem}
We notice that formula \eqref{Gz1} was firstly introduced by \cite{BaudoinGarofalo09}. It contains a commutative iteration assumption  
$$\Gamma_{1}(f,\Gamma_{1}^z(f,f))=\Gamma_{1}^z(f,\Gamma_{1}(f,f)).$$
Here we introduce an additional term \eqref{new term}, which overcomes and removes this assumption. 
In fact, in the paper, we show that formula \eqref{new term} is exactly the new bilinear form for this assumption by the weak form in probability density space. See details in \cite{FL}.
\end{rem}
\begin{rem}
{We comment that the derived curvature tensor works on any compact region (where $\kappa >0$) with sub-Riemannian metric. In this case, our curvature is also useful in establishing the convergence rate of sub-Riemannian drift diffusion process defined in a compact region.} 
\end{rem}
\begin{rem}
It is also worth mentioning that many sub-Riemannian manifolds are non-compact. 
Hence there may not exist a positive constant $\kappa$ for both classical $\Gamma_1$ and $\Gamma^z_1$ directions in the non-compact domain. The non-compactness of the domain brings additional difficulties. To prove the associated inequalities in this case, we need to extend the result derived in \cite{baudoin2012log, wang1997logarithmic}. This is a direction for future works.  
\end{rem}

\subsection{{Ricci curvature and entropy dissipation}}\label{section2.1}
In this subsection, we apply the generalized Gamma $z$ calculus to study the convergence rate of sub-Riemannian drift diffusion process. 

{
\begin{thm}[Entropy dissipation]\label{thm11}
Suppose that there exists a constant $\kappa> 0$, satisfying \eqref{z-log}. Then the following dissipation result hold. Denote $\rho_t$ as the probability density function of sub-Riemannian drift diffusion process \eqref{SDE framework1}. Then 
\begin{itemize}
\item[(i)]
\begin{equation*}
    \mathrm{D}_{\mathrm{KL}}(\rho_t\|\rho^*)\leq \frac{1}{2\kappa}e^{-2\kappa t}\mathcal{I}_{a,z}(\rho_0\|\rho^*).
\end{equation*}
\item[(ii)]
\begin{equation*}
 \|\rho(t,x)-\rho^*(x)\|_{L^1(\mathbb{R}^{n+m})}\leq \sqrt{\frac{1}{\kappa}\mathcal{I}_{a,z}(\rho_0\|\rho^*) }e^{-\kappa t}.
\end{equation*}    
\end{itemize}
\end{thm}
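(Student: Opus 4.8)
The plan is to deduce both statements from three ingredients: (a) an exponential decay estimate for the $a,z$-Fisher information $\mathrm{I}_{a,z}(\rho_t\|\rho^*)$ along the flow \eqref{SDE framework1}; (b) the $z$-log-Sobolev inequality of Proposition \ref{prop1.4}; and (c) the Csisz\'ar--Kullback--Pinsker inequality. The point is that (i) follows from (a)+(b) without ever having to integrate the entropy dissipation in time, and (ii) then follows from (i)+(c) by a one-line substitution.

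First I would set up the Fisher-information dissipation. Write $h_t:=\rho_t/\rho^*$; since $\rho^*$ is the invariant density and $L$ is symmetric on $L^2(\rho^*\,dx)$, the function $h_t$ solves the backward Kolmogorov equation $\partial_t h_t = L h_t$. Using the generalized Gamma $z$ calculus of Definition \ref{defn:tilde gamma 2 znew} together with the Lyapunov/entropy-dissipation computation in subsection \ref{section2.1} (and \cite{FL}), I would establish the dissipation identity
\[
\frac{d}{dt}\,\mathrm{I}_{a,z}(\rho_t\|\rho^*) \;=\; -2\int_{\mathbb{R}^{n+m}}\Big[\Gamma_2(\log h_t,\log h_t)+\Gamma_2^{z,\rho^*}(\log h_t,\log h_t)\Big]\,\rho_t\,dx .
\]
The essential feature that makes this identity close is the extra bilinear term \eqref{new term} built into $\Gamma_2^{z,\rho^*}$: it is precisely the quantity that absorbs the mismatch coming from the fact that $\rho_t$ evolves only along the horizontal directions while $\mathrm{I}_{a,z}$ also weighs the vertical gradient, so that no commutator remainder survives (this is the role of $\div_z^{\rho^*}(\Gamma_{1,\nabla(aa^{\ts})})-\div_a^{\rho^*}(\Gamma_{1,\nabla(zz^{\ts})})$).

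With this identity in hand, the rest is routine. Feeding the pointwise curvature bound \eqref{z-log} into it — legitimate because \eqref{z-log} is an inequality of bilinear forms and $\rho_t\,dx$ is a nonnegative measure — gives
\[
\frac{d}{dt}\,\mathrm{I}_{a,z}(\rho_t\|\rho^*)\;\le\;-2\kappa\int_{\mathbb{R}^{n+m}}\big[\Gamma_1(\log h_t,\log h_t)+\Gamma_1^z(\log h_t,\log h_t)\big]\rho_t\,dx\;=\;-2\kappa\,\mathrm{I}_{a,z}(\rho_t\|\rho^*),
\]
so Gr\"onwall's lemma yields $\mathrm{I}_{a,z}(\rho_t\|\rho^*)\le e^{-2\kappa t}\mathrm{I}_{a,z}(\rho_0\|\rho^*)$. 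Applying Proposition \ref{prop1.4} to the (smooth, by H\"ormander) density $\rho_t$ then gives part (i):
\[
\mathrm{D}_{\mathrm{KL}}(\rho_t\|\rho^*)\;\le\;\frac{1}{2\kappa}\,\mathrm{I}_{a,z}(\rho_t\|\rho^*)\;\le\;\frac{1}{2\kappa}e^{-2\kappa t}\mathrm{I}_{a,z}(\rho_0\|\rho^*).
\]
For part (ii) I would invoke the Csisz\'ar--Kullback--Pinsker inequality $\|\rho_t-\rho^*\|_{L^1}^2\le 2\,\mathrm{D}_{\mathrm{KL}}(\rho_t\|\rho^*)$ and substitute the bound just obtained, which gives $\|\rho(t,\cdot)-\rho^*\|_{L^1(\mathbb{R}^{n+m})}\le\sqrt{\tfrac{1}{\kappa}\mathrm{I}_{a,z}(\rho_0\|\rho^*)}\,e^{-\kappa t}$.

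The hard part will be the Fisher-information dissipation identity of the second step: it requires differentiating $\mathrm{I}_{a,z}(\rho_t\|\rho^*)$ under the integral, a careful chain of integrations by parts against the weighted measure $\rho^*\,dx$, and the $z$-Bochner formula to reorganize the resulting terms into $\Gamma_2+\Gamma_2^{z,\rho^*}$; the interchange of derivative and integral, and the vanishing of boundary terms, are the technical points to be justified (immediate on a compact region from the boundary conditions, and in general by a truncation/approximation argument, with the estimate being vacuous when $\mathrm{I}_{a,z}(\rho_0\|\rho^*)=\infty$). Everything after that identity — applying \eqref{z-log}, Gr\"onwall, Proposition \ref{prop1.4}, and the Csisz\'ar--Kullback--Pinsker inequality — is straightforward bookkeeping of constants.
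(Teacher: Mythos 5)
Your proposal is correct and follows essentially the same route as the paper: the Fisher-information dissipation identity (the paper's Proposition \ref{prop1}) combined with the curvature bound \eqref{z-log} and Gr\"onwall gives $\mathrm{I}_{a,z}(\rho_t\|\rho^*)\le e^{-2\kappa t}\mathrm{I}_{a,z}(\rho_0\|\rho^*)$, then Proposition \ref{prop1.4} applied to $\rho_t$ yields (i) and Pinsker's inequality yields (ii), with all constants matching. The only cosmetic difference is that you phrase the dissipation computation via the backward Kolmogorov equation for $h_t=\rho_t/\rho^*$, whereas the paper works directly with the Fokker--Planck equation for $\rho_t$.
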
}
{We formulate the proof in the following orders. This proof explains the derivation of generalized Gamma $z$ calculus. 

Our method is based on the Lyapunov method in density space.} We first formulate the Fokker-Planck equation of SDE \eqref{SDE framework1}:
\begin{equation}\label{FPE}
\begin{split}
\partial_t\rho
=&\nabla\cdot(aa^{\ts}\nabla\rho)+\nabla\cdot(\rho aa^{\ts}\nabla V)+\nabla\cdot(\rho a\otimes \nabla a)\\
=&\nabla\cdot(\rho aa^{\ts}\nabla\log\rho)+\nabla\cdot(\rho aa^{\ts}\nabla V)-\nabla\cdot(\rho aa^{\ts} \nabla \log\Vol)\\
=&\nabla\cdot(\rho aa^{\ts}\nabla\log\frac{\rho}{\rho^*}),
\end{split}
\end{equation}
where we use the facts that $\nabla\rho=\rho\nabla\log\rho$ and $aa^{\ts}\nabla \log\Vol=a\otimes \nabla a$ in the second equality. 

We next construct the following Lyapunov functional for equation \eqref{FPE}. Denote $\delta \mathrm{D}_{\mathrm{KL}}=\log\frac{\rho}{\rho^*}+1$, where $\delta$ is the $L^2$ first variation. 
Then
\begin{equation*}
\mathrm{I}_{a}(\rho)= \int \Gamma_{1}(\delta  \mathrm{D}_{\mathrm{KL}}, \delta  \mathrm{D}_{\mathrm{KL}} )dx=\int \Big(\nabla\log\frac{\rho}{\rho^*}, aa^{\ts}\nabla\log\frac{\rho}{\rho^*}\Big) dx,
\end{equation*}
and
\begin{equation*}
\mathrm{I}_{z}(\rho)= \int \Gamma_{1}^z(\delta  \mathrm{D}_{\mathrm{KL}}, \delta  \mathrm{D}_{\mathrm{KL}} )dx =\int \Big(\nabla\log\frac{\rho}{\rho^*}, zz^{\ts}\nabla\log\frac{\rho}{\rho^*}\Big) dx.
\end{equation*}
With this notation, we have
 \begin{equation*}
\mathrm{I}_{a,z}(\rho):=\mathrm{I}_a(\rho)+\mathrm{I}_z(\rho)=\int \Big(\Gamma_{1}(\delta  \mathrm{D}_{\mathrm{KL}}, \delta  \mathrm{D}_{\mathrm{KL}} )+ \Gamma_{1}^z(\delta  \mathrm{D}_{\mathrm{KL}}, \delta  \mathrm{D}_{\mathrm{KL}} )\Big)dx.
\end{equation*}
We next prove the following proposition.
\begin{prop}\label{prop1}
\begin{equation*}
\frac{d}{dt}\mathrm{I}_{a,z}(\rho_t)=- 2\int \Big(\Gamma_2(\delta \mathrm{D}_{\mathrm{KL}}, \delta\mathrm{D}_{\mathrm{KL}})+ \Gamma_{2}^{z,\rho^*}(\delta \mathrm{D}_{\mathrm{KL}}, \delta\mathrm{D}_{\mathrm{KL}}))\Big)\rho_t dx, 
\end{equation*}
\end{prop}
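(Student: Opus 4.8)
The plan is to compute the time derivative of $\mathrm{I}_{a,z}(\rho_t) = \int (\Gamma_1(\delta\mathrm{D}_{\mathrm{KL}},\delta\mathrm{D}_{\mathrm{KL}}) + \Gamma_1^z(\delta\mathrm{D}_{\mathrm{KL}},\delta\mathrm{D}_{\mathrm{KL}}))dx$ along the flow \eqref{FPE}, and then recognize the resulting expression as the weak form of $-2\int(\Gamma_2 + \Gamma_2^{z,\rho^*})\rho_t\,dx$. Throughout, write $\Phi = \delta\mathrm{D}_{\mathrm{KL}} = \log\frac{\rho}{\rho^*} + 1$, so that $\nabla\Phi = \nabla\log\frac{\rho}{\rho^*}$ and \eqref{FPE} reads $\partial_t\rho = \nabla\cdot(\rho\, aa^{\ts}\nabla\Phi) = \nabla\cdot(\rho\, aa^{\ts}(\nabla\Phi))$. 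First I would differentiate under the integral sign, producing two kinds of terms: those where $\partial_t$ hits the density-dependence hidden in $\Phi$ (equivalently, a term proportional to $\partial_t\rho$ through the chain rule $\partial_t\Phi = \partial_t\rho/\rho$) and those already packaged as $\int \Gamma_1(\Phi,\Phi)\,\partial_t(\text{measure})$ if one writes things against Lebesgue measure. The cleanest bookkeeping: since $\Gamma_1(\Phi,\Phi) = \langle a^{\ts}\nabla\Phi, a^{\ts}\nabla\Phi\rangle$ depends on $\rho$ only through $\nabla\Phi = \nabla\partial_t\rho/\rho - \dots$, one gets $\frac{d}{dt}\int\Gamma_1(\Phi,\Phi)dx = 2\int \Gamma_1(\Phi, \partial_t\Phi)\,dx$ where $\partial_t\Phi = \partial_t\rho/\rho = \frac{1}{\rho}\nabla\cdot(\rho\,aa^{\ts}\nabla\Phi)$, i.e. $\partial_t\Phi = L\Phi$ in the appropriate sense (here $L$ is the generator, with the understanding that $L$ acts on $\Phi$ and the weighting by $\rho^*$ versus $\rho$ must be tracked carefully — this is exactly where the $\rho^*$-divergences enter).

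The second step is the integration-by-parts identity. For the $\Gamma_1$ part, I would use the standard Bakry–Émery manipulation adapted to the weighted/degenerate setting: $\int \Gamma_1(\Phi, L\Phi)\rho^*\,dx$-type integrals combine with $\int (L\Gamma_1(\Phi,\Phi))\rho^*\,dx = 0$ (since $L$ is the generator of a process with invariant measure $\rho^*$, hence $\int L(\cdot)\,\rho^*dx = 0$) to yield $\int \Gamma_1(\Phi,L\Phi)\rho^* dx = -\int \Gamma_2(\Phi,\Phi)\rho^* dx$ — but one must be careful, because the flow is against $\rho$, not $\rho^*$, and $\partial_t\Phi$ involves $\frac{1}{\rho}\nabla\cdot(\rho\,aa^{\ts}\nabla\Phi)$ rather than $\frac{1}{\rho^*}\nabla\cdot(\rho^*\,aa^{\ts}\nabla\Phi)$. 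So I would split $\partial_t\rho = \nabla\cdot(\rho\,aa^{\ts}\nabla\Phi)$ and integrate by parts directly in the $\rho$-weighted inner products, keeping the extra terms generated by $\nabla\log\frac{\rho}{\rho^*} = \nabla\Phi$ appearing when derivatives land on $\rho$ versus $\rho^*$. For the $\Gamma_1^z$ part the same computation produces the first line \eqref{Gz1} of $\Gamma_2^{z,\rho^*}$ plus genuinely new cross terms: these come from the fact that $L$ is built from $aa^{\ts}$ while $\Gamma_1^z$ is built from $zz^{\ts}$, so commuting a derivative of $aa^{\ts}$ past the $z$-gradient (and vice versa) leaves behind precisely the terms $\div_z^{\rho^*}(\Gamma_{1,\nabla(aa^{\ts})}(\Phi,\Phi)) - \div_a^{\rho^*}(\Gamma_{1,\nabla(zz^{\ts})}(\Phi,\Phi))$ of \eqref{Gz2}.

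The third and final step is purely algebraic: collect all the boundary-free terms, check that the "extra" pieces assemble exactly into the new term \eqref{new term} in Definition \ref{defn:tilde gamma 2 znew}, and conclude $\frac{d}{dt}\mathrm{I}_{a,z}(\rho_t) = -2\int(\Gamma_2(\Phi,\Phi) + \Gamma_2^{z,\rho^*}(\Phi,\Phi))\rho_t\,dx$. I expect the main obstacle to be the careful accounting in the $\Gamma_1^z$ computation: because $z$ is an arbitrary smooth matrix field and $[L,\Gamma_1^z]$ does not close (there is no transverse symmetry assumed), several integration-by-parts steps produce terms involving $\nabla(aa^{\ts})$, $\nabla(zz^{\ts})$, $\nabla\log\rho^*$, and $\nabla\Phi$ simultaneously, and showing that exactly the right combination survives — no more, no less — is the delicate bookkeeping that forces the definition of $\div_a^{\rho^*}$, $\div_z^{\rho^*}$ and the vector Gamma-one forms. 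A secondary technical point is justifying differentiation under the integral and the vanishing of all boundary terms, which on $\mathbb{R}^{n+m}$ requires decay/integrability of $\rho_t$ and its derivatives; I would either assume sufficient regularity (as is standard and consistent with the smoothness hypotheses already in force from the strong Hörmander condition) or restrict to the compact-domain case flagged in the remarks.
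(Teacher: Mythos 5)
Your plan is correct and follows essentially the same route as the paper's own argument: differentiate $\mathrm{I}_{a,z}(\rho_t)$ along \eqref{FPE}, split the derivative into the contribution where $\partial_t$ hits $\delta\mathrm{D}_{\mathrm{KL}}$ (via $\partial_t\rho/\rho$) and the contribution where it hits the $\rho_t$-weighted measure, then integrate by parts to recognize the weak forms of $\Gamma_2$ and $\Gamma_2^{z,\rho^*}$, with the non-commutativity of the $aa^{\ts}$-generator against the $zz^{\ts}$-gradient producing exactly the extra divergence terms \eqref{new term} — which is precisely how the paper (deferring the routine manipulations to \cite{FL}) organizes the computation via the weighted Laplacians $\Delta_{\rho_t(aa^{\ts})}$, $\Delta_{\rho_t(zz^{\ts})}$ and $\delta^2\mathrm{D}_{\mathrm{KL}}=1/\rho$. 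Just note that the functional $\mathrm{I}_{a,z}$ carries the weight $\rho\,dx$ (as in its definition as $\mathrm{I}_{a,z}(\rho\|\rho^*)$, and as is implicit in the paper's writing $\mathrm{I}_{a,z}=\int\delta\mathrm{D}_{\mathrm{KL}}(-\Delta_{\rho_t(aa^{\ts}+zz^{\ts})}\delta\mathrm{D}_{\mathrm{KL}})dx$), so the "$\partial_t(\text{measure})$" term you mention is genuinely present and must be kept.
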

\begin{proof}
The proof has been shown in \cite{FL}, whose motivation is presented in \cite{LiG, Li2019_diffusion}. For the self-contained purpose, we outline the major derivation below. 
Given any smooth matrix function $c\in C^{\infty}(\mathbb{R}^{(n+m)\times (n+m)})$, we use the convention that the weighted Laplacian operator is denoted by,
\begin{equation*}
\Delta_{c}=\nabla\cdot(c\nabla).
\end{equation*}
Later on, we will apply different functions of $c$. Then 
\begin{equation*}
    \begin{split}
    \frac{d}{dt}\mathrm{I}_{a,z}(\rho_t)=&\frac{d}{dt}\int (\delta\mathrm{D}_{\mathrm{KL}}, (-\Delta_{\rho_t(aa^{\ts}+zz^{\ts})}\delta\mathrm{D}_{\mathrm{KL}}) dx \\
    =&-2\int \delta^2\DKL(\Delta_{\rho_t(aa^{\ts}+zz^{\ts})}\delta \DKL) (\Delta_{\rho_t(aa^{\ts})}\delta \DKL)dx\\
    &+\int (\nabla\delta\DKL, (aa^{\ts}+zz^{\ts})\nabla\delta\DKL)\Delta_{\rho_t(aa^{\ts})}\delta\DKL dx\\
        =&-2\Big\{\int \delta^2\DKL(\Delta_{\rho_t(aa^{\ts})}\delta \DKL) (\Delta_{\rho_t(aa^{\ts})}\delta \DKL)dx\\
    &\qquad-\frac{1}{2}\int (\nabla\delta\DKL, (aa^{\ts})\nabla\delta\DKL)\Delta_{\rho_t(aa^{\ts})}\delta\DKL dx\Big\}\\
    =&-2\Big\{\int \delta^2\DKL(\Delta_{\rho_t(zz^{\ts})}\delta \DKL) (\Delta_{\rho_t(aa^{\ts})}\delta \DKL)dx\\
    &\qquad-\frac{1}{2}\int (\nabla\delta\DKL, (zz^{\ts})\nabla\delta\DKL)\Delta_{\rho_t(aa^{\ts})}\delta\DKL dx\Big\}\\
    =&- 2\int \Big(\Gamma_2(\delta \mathrm{D}_{\mathrm{KL}}, \delta\mathrm{D}_{\mathrm{KL}})+ \Gamma_{2}^{z,\rho^*}(\delta \mathrm{D}_{\mathrm{KL}}, \delta\mathrm{D}_{\mathrm{KL}})\Big)\rho_t dx,
    \end{split}    
\end{equation*}
where $\delta^2\DKL=\frac{1}{\rho}$ and the last equality follows the routine calculations shown in \cite{FL}.
\end{proof}

We are ready to prove the convergence properties in Theorem \ref{thm11} and functional inequalities for degenerate drift-diffusion processes. 

\begin{proof}[Proof of Proposition \ref{prop1.4}]
Our result follows the Lyapunov methods. Given a Lyapunov function $\mathrm{I}_{a,z}$, along the Fokker--Planck equation \eqref{FPE}, we have
\begin{equation*}
\frac{d}{dt}\mathrm{I}_{a,z}(\rho_t)=-2\int \Big(\Gamma_2(\delta\mathrm{D}_{\mathrm{KL}}, \delta\mathrm{D}_{\mathrm{KL}}) +\Gamma_2^{z,\rho^*}(\delta\mathrm{D}_{\mathrm{KL}},\delta\mathrm{D}_{\mathrm{\mathrm{KL}}}) \Big)\rho_t dx.   \end{equation*}
If $\Gamma_2(f,f)+\Gamma_2^{z,{\rho^*}}(f,f)\succeq \kappa(\Gamma_1(f,f)+\Gamma_1^z(f,f))$ with $\kappa\geq 0$, then 
\begin{equation*}\label{iq}
    \frac{d}{dt}\mathrm{I}_{a,z}(\rho_t)\leq -2\kappa \mathrm{I}_{a,z}(\rho_t). 
\end{equation*}
We next show the log--Sobolev inequality. Notice the fact that 
\begin{equation*}
-\frac{d}{dt}\mathrm{D}_{\mathrm{KL}}(\rho_t)=\mathrm{I}_a(\rho_t)\leq \mathrm{I}_{a,z}(\rho_t),
\end{equation*}
then \eqref{iq} implies the fact that
\begin{equation*}
\begin{split}
    -\mathrm{I}_{a,z}(\rho)=&\int_0^{\infty}\frac{d}{dt}\mathrm{I}_{a,z}(\rho_t)dt\\
    \leq & -2\kappa \int_0^{\infty}\mathrm{I}_{a,z}(\rho_t)dt=-2\kappa \int_0^\infty\Big(\mathrm{I}_a(\rho_t)+\mathrm{I}_z(\rho_t)\Big)dt\\
    \leq &-2\kappa \int_0^\infty\mathrm{I}_a(\rho_t)dt\\
     =&-2\kappa  \int_0^{\infty}(-\frac{d}{dt}\mathrm{D}_{\mathrm{KL}}(\rho_t))dt\\
    =&-2\kappa\mathrm{D}_{\mathrm{KL}}(\rho),
    \end{split}
\end{equation*}
where we denote $\rho_0=\rho$. Thus $\mathrm{I}_{a,z}(\rho)\geq 2\kappa \mathrm{D}(\rho)$, which finishes the proof.
\end{proof}

We are now ready to prove the main result of this paper. 

\begin{proof}[Proof of Theorem \ref{thm11}]
The exponential decay of KL divergence follows from the decay of relative Fisher information functional $\mathcal{I}_{a,z}$ and z-log-Sobolev inequality. From the $z$-log-Sobolev inequality, 
we have 
\begin{equation*}
    \mathrm{D}_{\mathrm{KL}}(\rho_t\|\rho^*)\leq \frac{1}{2\kappa}\mathcal{I}_{a,z}(\rho_t\|\rho^*)\leq \frac{1}{2\kappa}e^{-2\kappa t}\mathcal{I}_{a,z}(\rho_0\|\rho^*).
\end{equation*}
Hence we prove (i). We next apply the Pinsker's inequality, i.e the inequality between KL divergence and $L^1$ distance. Since 
\begin{equation*}
\|\rho_t-\rho^* \|_{L^1(\mathbb{R}^{n+m})}\leq \sqrt{2\mathrm{D}_{\mathrm{KL}}(\rho_t\|\rho^*)}.
\end{equation*}
Using (i), we finish the proof of (ii).
\end{proof}
\begin{rem}
Our proof follows from the facts. 
\begin{equation*}
\textrm{Gamma $z$ calculus with lower bound $\kappa$} \Rightarrow \textrm{$\mathcal{I}_{a,z}(\rho)$ decay}\Rightarrow \textrm{zLSI}\Rightarrow \textrm{$\mathrm{D}_{KL}$ decay}\Rightarrow \textrm{$L_1$ decay.}      
\end{equation*}
Our definition of Gamma $z$ calculus and its corresponding Ricci curvature lower bound can still be used to formulate the decay rate for the densities of sub-Riemannian SDEs. These decay rates can recover the classical results in Riemannian manifold. 
\end{rem}

\section{sub-Riemannian Ricci curvature}\label{section3}
In this section, we demonstrate the generalized Gamma $z$ calculus in bilinear forms, from which we derive the sub-Riemannian Ricci curvature. 
This can be viewed as a Bochner's formula associated with $z$--direction. We call it $z$--Bochner's formula. They are extensions of the corresponding ones in Riemannian manifolds. 

In \cite{FL}, we consider the drift--diffusion process 
\begin{equation*}
dX_t=b(X_t)dt+a(X_t)\circ dB_t,   
\end{equation*}
where $b$ is a given smooth drift direction. 
In this paper, we simply assume
\begin{equation*}
b=-\frac{1}{2}aa^{\ts}\nabla V.
\end{equation*}
Compared to the result of \cite{FL}, this choice of $b$ results at the same generator of drift diffusion processes up to a scale of $2$, which will not take the effect in the Gamma calculus. 

We are now ready to present the generalized Gamma $z$ calculus as follows. 
\begin{nota}\label{notation} For any smooth function $f:\hR^{n+m}\rightarrow \hR$, denoted as $f\in C^{\infty}(\hR^{n+m})$, and  $(n+m)\times n$ matrix $a$, we define matrix $Q$ as
\beaa
Q=\begin{pmatrix}
	a^{\ts}_{11}a^{\ts}_{11}&\cdots &a^{\ts}_{1(n+m)}a^{\ts}_{1(n+m)}\\
	\cdots & a^{\ts}_{i\hat i}a^{\ts}_{k\hat k}&\cdots \\
	a^{\ts}_{n 1}a^{\ts}_{n 1} &\cdots &a^{\ts}_{n(n+m)}a^{\ts}_{n(n+m)} 
	\end{pmatrix}\in \mathbb{R}^{n^2\times(n+m)^2},
\eeaa
with $Q_{ik\hat i\hat k}=a^{\ts}_{i\hat i}a^{\ts}_{k\hat k}$. {More precisely, for each row (resp. column) of $Q$, the row (resp. column)  indices of $Q_{ik\hat i\hat k}$ following $\sum_{i=1}^n\sum_{k=1}^n$  (resp. $\sum_{\hat i=1}^{n+1}\sum_{\hat k=1}^{n+m}$).} For $(n+m)\times m$ matrix $z$, we define matrix $P$ as 
\beaa
P=\begin{pmatrix}
	z^{\ts}_{11}a^{\ts}_{11}&\cdots &z^{\ts}_{1(n+m)}a^{\ts}_{1(n+m)}\\
	\cdots & z^{\ts}_{i\hat i}a^{\ts}_{k\hat k}&\cdots \\
	z^{\ts}_{m 1}a^{\ts}_{n 1} &\cdots &z^{\ts}_{m(n+m)}a^{\ts}_{n(n+m)} 
	\end{pmatrix}\in \mathbb{R}^{(nm)\times(n+m)^2},
\eeaa
with $P_{ik\hat i\hat k}=z^{\ts}_{i\hat i}a^{\ts}_{k\hat k}$. 
For any $\hat i,\hat k,\hat j=1,\cdots,n+m$ and $i,k=1,\cdots,n$ $($or $1,\cdots, m)$.
 We denote $C$ as a $(n+m)^2\times 1$ dimensional vector with components defined as 
 \beaa 
C_{\hat i\hat k}=\left[\sum_{i,k=1}^n\sum_{i'=1}^{n+m}\left(a^{\ts}_{i\hat i}a^{\ts}_{i i'} ( \frac{\partial a^{\ts}_{k\hat k}}{\partial x_{i'}})(a^{\ts}\nabla)_kf -  a^{\ts}_{k i'}a^{\ts}_{i\hat k}\frac{\partial a^{\ts}_{i \hat i}}{\partial x_{i'}}   (a^{\ts}\nabla)_kf\right)\right].
\eeaa
Here we keep the notation $(a^{\ts}\nabla)_kf=\sum_{k'=1}^{n+m}a^{\ts}_{kk'}\frac{\pa f}{\pa x_{k'}}$. Denote $D$ as a $n^2\times 1$ dimensional vector with components defined as
\beaa
D_{ik}=\sum_{\hat i,\hat k=1}^{n+m}a^{\ts}_{i\hat i}\frac{\partial a^{\ts}_{k\hat k}}{\partial x_{\hat i}}\frac{\partial f}{\partial x_{ \hat k}}.
\eeaa
Denote $F$ as a $(n+m)^2\times 1$ dimensional vector with components defined as
 \beaa 
F_{\hat i\hat k}= \left[\sum_{i=1}^n\sum_{k=1}^m\sum_{i'=1}^{n+m}\left( a^{\ts}_{i\hat i}a^{\ts}_{i i'} ( \frac{\partial z^{\ts}_{k\hat k}}{\partial x_{i'}}) (z^{\ts}\nabla) - z^{\ts}_{k i'}a^{\ts}_{i\hat k}\frac{\partial a^{\ts}_{i \hat i}}{\partial x_{i'}}   (z^{\ts}\nabla)_kf\right)\right].
\eeaa
 Denote $E$ as a $(n\times m)\times 1$ dimensional vector with components defined as
 \beaa
E_{ik}=\sum_{\hat i,\hat k=1}^{n+m}a^{\ts}_{i\hat i}\frac{\partial z^{\ts}_{k\hat k}}{\partial x_{\hat i}}\frac{\partial f}{\partial x_{ \hat k}}. 
\eeaa  
Denote $G$ as a $(n+m)^2\times 1$ dimensional vector. In local coordinates, we have 
\beaa
G_{\hat i\hat j}&=& \sum_{i=1}^n\sum_{j=1}^m \sum_{j',i'=1}^{n+m}\left[\left( z^{\ts}_{j\hat j} z^{\ts}_{j j'} \frac{\pa a^{\ts}_{i\hat i}}{\pa x_{ j'}}a^{\ts}_{ii'}\frac{\pa f}{\pa x_{i'}}+z^{\ts}_{j \hat j} z^{\ts}_{j j'} \frac{\pa a^{\ts}_{i i'}}{\pa x_{ j'}}\frac{\pa f}{\pa x_{ i'}}a^{\ts}_{i\hat i}\right)\right. \\
&&\left.\quad\quad\quad\quad\quad\quad-\left(a^{\ts}_{i\hat i}a^{\ts}_{i i'} \frac{\pa z^{\ts}_{j\hat j}}{\pa x_{ i'}}z^{\ts}_{jj'}\frac{\pa f}{\pa x_{j'}} +a^{\ts}_{i\hat i}a^{\ts}_{ii'} \frac{\pa z^{\ts}_{j j'}}{\pa x_{ i'}}\frac{\pa f}{\pa x_{ j'}}z^{\ts}_{j\hat j}\right)\right].\nonumber
\eeaa
Denote $X$ as the vectorization of the Hessian matrix of function $f$, 
\beaa
X=\begin{pmatrix}
	\frac{\pa^2 f}{\pa x_{1} \pa x_{1}}\\
	\cdots\\
		\frac{\pa^2 f}{\pa x_{\hat i} \pa x_{\hat k}}\\
		\cdots \\
		\frac{\pa^2 f}{\pa x_{n+m} \pa x_{n+m}}
	\end{pmatrix}\in \mathbb{R}^{(n+m)^2\times 1}.
\eeaa
\end{nota}{}

\begin{assum}\label{assumption:main result}
	Assume that there exists vectors $\Lambda_1, \Lambda_2\in \mathbb{R}^{(n+m)^2\times 1}$ such that
	\beaa
	(Q^{\ts}Q\Lambda_1+P^{\ts}P \Lambda_2)^{\ts}X&=&(F+C+G+Q^{\ts}D+P^{\ts}E)^{\ts}X.
	\eeaa
\end{assum}
\begin{thm}[$z$--Bochner's formula]\label{thm1}
If Assumption \ref{assumption:main result} is satisfied, then the following decomposition holds
	\beaa
\Gamma_{2}(f,f)+\Gamma_2^{z,\rho^*}(f,f)&=&|\mathfrak{Hess}_{a,z}^{G}f|^2
+\mathfrak{R}^{G}_{ab}(\nabla f,\nabla f) +\mathfrak{R}_{zb}(\nabla f,\nabla f)+\mathfrak{R}_{\rho^*}(\nabla f,\nabla f),
\eeaa 
where we define 
\beaa
|\mathfrak{Hess}_{a,z}^{G}f|^2=[X+\Lambda_1]^{\ts}Q^{\ts}Q[X+\Lambda_1]+[X+\Lambda_2]^{\ts}P^{\ts}P[X+\Lambda_2],
\eeaa
and denote the following three tensors, such that 
\beaa
\mathfrak{R}^{G}_{ab}(\nabla f,\nabla f)=\mathfrak{R}^{G}(\nabla f, \nabla f)+\mathfrak{R}_{ab}(\nabla f, \nabla f),
\eeaa
with
\beaa
\mathfrak{R}^{G}(\nabla f,\nabla f)=-\Lambda_1^{\ts}Q^{\ts}Q\Lambda_1-\Lambda_2^{\ts}P^{\ts}P\Lambda_2+D^{\ts}D+E^{\ts}E,
\eeaa
and 
\beaa
\mathfrak{R}_{ab}(\nabla f,\nabla f)
=&&\sum_{i,k=1}^n\sum_{i',\hat i,\hat k=1}^{n+m} \la a^{\ts}_{ii'} (\frac{\partial a^{\ts}_{i \hat i}}{\partial x_{i'}} \frac{\partial a^{\ts}_{k\hat k}}{\partial x_{\hat i}}\frac{\partial f}{\partial x_{\hat k}}) ,(a^{\ts}\nabla)_kf\ra_{\hR^n}  \nonumber\\
&&+\sum_{i,k=1}^n\sum_{i',\hat i,\hat k=1}^{n+m} \la a^{\ts}_{ii'}a^{\ts}_{i \hat i} (\frac{\partial }{\partial x_{i'}} \frac{\partial a^{\ts}_{k\hat k}}{\partial x_{\hat i}})(\frac{\partial f}{\partial x_{\hat k}}) ,(a^{\ts}\nabla)_kf\ra_{\hR^n} \nonumber\\
&&-\sum_{i,k=1}^n\sum_{i',\hat i,\hat k=1}^{n+m} \la a^{\ts}_{k\hat k}\frac{\partial a^{\ts}_{ii'}}{\partial x_{\hat k}} \frac{\partial a^{\ts}_{i \hat i}}{\partial x_{i'}} \frac{\partial f}{\partial x_{\hat i}})  ,(a^{\ts}\nabla)_kf\ra_{\hR^n}  \nonumber\\
&&-\sum_{i,k=1}^n\sum_{i',\hat i,\hat k=1}^{n+m} \la a^{\ts}_{k\hat k} a^{\ts}_{ii'} (\frac{\partial }{\partial x_{\hat k}} \frac{\partial a^{\ts}_{i \hat i}}{\partial x_{i'}}) \frac{\partial f}{\partial x_{\hat i}}  ,(a^{\ts}\nabla)_kf\ra_{\hR^n},  \nonumber\\
&&-2\sum_{i=1}^n \sum_{\hat i,\hat k=1}^{n+m}\la (a^T_{i\hat i}\frac{\pa b_{\hat k}}{\pa x_{\hat i}}\frac{\pa f}{\pa x_{\hat k}}-b_{\hat k}\frac{\pa a^T_{i\hat i}}{\pa x_{\hat k}} \frac{\pa f}{\pa x_{\hat i}} ),(a^T\nabla f)_i\ra_{\hR^n}\nonumber.
\eeaa
In addition, 
\beaa
\mathfrak{R}_{zb}(\nabla f,\nabla f)&=&\sum_{i=1}^n\sum_{k=}^m\sum_{i',\hat i,\hat k=1}^{n+m} \la a^{\ts}_{ii'} (\frac{\partial a^{\ts}_{i \hat i}}{\partial x_{i'}} \frac{\partial z^{\ts}_{k\hat k}}{\partial x_{\hat i}}\frac{\partial f}{\partial x_{\hat k}}) ,(z^{\ts}\nabla)_kf\ra_{\hR^m}\nonumber \\
	&&+\sum_{i=1}^n\sum_{k=}^m\sum_{i',\hat i,\hat k=1}^{n+m} \la a^{\ts}_{ii'}a^{\ts}_{i \hat i} (\frac{\partial }{\partial x_{i'}} \frac{\partial z^{\ts}_{k\hat k}}{\partial x_{\hat i}})(\frac{\partial f}{\partial x_{\hat k}}) ,(z^{\ts}\nabla)_kf\ra_{\hR^m} \nonumber \\
	&&-\sum_{i=1}^n\sum_{k=1}^m\sum_{i',\hat i,\hat k=1}^{n+m} \la z^{\ts}_{k\hat k}\frac{\partial a^{\ts}_{ii'}}{\partial x_{\hat k}} \frac{\partial a^{\ts}_{i \hat i}}{\partial x_{i'}} \frac{\partial f}{\partial x_{\hat i}})  ,(z^{\ts}\nabla)_kf\ra_{\hR^m} \nonumber \\
&&-\sum_{i=1}^n\sum_{k=1}^m\sum_{i',\hat i,\hat k=1}^{n+m} \la z^{\ts}_{k\hat k} a^{\ts}_{ii'} (\frac{\partial }{\partial x_{\hat k}} \frac{\partial a^{\ts}_{i \hat i}}{\partial x_{i'}}) \frac{\partial f}{\partial x_{\hat i}}  ,(z^{\ts}\nabla)_kf\ra_{\hR^m}\nonumber \\
&&-2\sum_{i=1}^m \sum_{\hat i,\hat k=1}^{n+m}\la (z^T_{i\hat i}\frac{\pa b_{\hat k}}{\pa x_{\hat i}}\frac{\pa f}{\pa x_{\hat k}}-b_{\hat k}\frac{\pa z^T_{i\hat i}}{\pa x_{\hat k}} \frac{\pa f}{\pa x_{\hat i}} ),(z^T\nabla f)_i\ra_{\hR^m},\nonumber
\eeaa 
and 
\beaa
\mathfrak{R}_{\rho^*}(\nabla f,\nabla f)&=&2\sum_{k=1}^m \sum_{i=1}^n\sum_{k',\hat k,\hat i,i'=1}^{n+m}\left[\frac{\pa }{\pa x_{k'}} z^{\ts}_{kk'} z^{\ts}_{k\hat k}\frac{\pa}{\pa x_{\hat k}}a^{\ts}_{i\hat i}\frac{\pa f}{\pa x_{\hat i}}a^{\ts}_{ii'}\frac{\pa f}{\pa x_{i'}}\right]\\
 &&+2\sum_{k=1}^m \sum_{i=1}^n\sum_{k',\hat k,\hat i,i'=1}^{n+m}\left[z^{\ts}_{kk'}\frac{\pa }{\pa x_{k'}} z^{\ts}_{k\hat k} \frac{\pa}{\pa x_{\hat k}}a^{\ts}_{i\hat i}\frac{\pa f}{\pa x_{\hat i}}a^{\ts}_{ii'}\frac{\pa f}{\pa x_{i'}} \right.\nonumber\\
 &&\quad\quad\quad\quad\quad\quad\quad\quad+z^{\ts}_{kk'} z^{\ts}_{k\hat k} \frac{\pa^2}{\pa x_{k'}\pa x_{\hat k}}a^{\ts}_{i\hat i}\frac{\pa f}{\pa x_{\hat i}}a^{\ts}_{ii'}\frac{\pa f}{\pa x_{i'}}\nonumber \\
&&\left.\quad\quad\quad\quad\quad\quad\quad\quad+z^{\ts}_{kk'} z^{\ts}_{k\hat k} \frac{\pa}{\pa x_{\hat k}}a^{\ts}_{i\hat i}\frac{\pa f}{\pa x_{\hat i}}\frac{\pa }{\pa x_{k'}}a^{\ts}_{ii'}\frac{\pa f}{\pa x_{i'}} \right].\nonumber \\
&&+2\sum_{k=1}^m \sum_{i=1}^n\sum_{\hat k,\hat i,i'=1}^{n+m}(z^{\ts}\nabla\log\rho^*)_k \left[ z^{\ts}_{k\hat k}\frac{\pa}{\pa x_{\hat k}}a^{\ts}_{i\hat i}\frac{\pa f}{\pa x_{\hat i}}a^{\ts}_{ii'}\frac{\pa f}{\pa x_{i'}} \right] \nonumber\\
&&-2\sum_{j=1}^m\sum_{l=1}^n\sum_{ l',\hat l,\hat j,j'=1}^{n+m}\left[\frac{\pa }{\pa x_{l'}} a^{\ts}_{ll'} a^{\ts}_{l\hat l} \frac{\pa}{\pa x_{\hat l}}z^{\ts}_{j\hat j}\frac{\pa f}{\pa x_{\hat j}}z^{\ts}_{jj'}\frac{\pa f}{\pa x_{j'}} \right] \nonumber\\
 &&- 2\sum_{j=1}^m\sum_{l=1}^n\sum_{ l',\hat l,\hat j,j'=1}^{n+m}\left[ a^{\ts}_{ll'}\frac{\pa }{\pa x_{l'}}a^{\ts}_{l\hat l} \frac{\pa}{\pa x_{\hat l}}z^{\ts}_{j\hat j}\frac{\pa f}{\pa x_{\hat j}}z^{\ts}_{jj'}\frac{\pa f}{\pa x_{j'}}  \right.\nonumber\\
 &&\quad\quad\quad\quad\quad\quad\quad\quad+a^{\ts}_{ll'}a^{\ts}_{l\hat l} \frac{\pa^2}{\pa x_{l'}\pa x_{\hat l}}z^{\ts}_{j\hat j}\frac{\pa f}{\pa x_{\hat j}}z^{\ts}_{jj'}\frac{\pa f}{\pa x_{j'}}\nonumber \\
&&\left.\quad\quad\quad\quad\quad\quad\quad\quad+a^{\ts}_{ll'}a^{\ts}_{l\hat l} \frac{\pa}{\pa x_{\hat l}}z^{\ts}_{j\hat j}\frac{\pa f}{\pa x_{\hat j}}\frac{\partial}{\pa x_{l'}}z^{\ts}_{jj'}\frac{\pa f}{\pa x_{j'}}  \right] \nonumber \\
&&-2\sum_{j=1}^m\sum_{l=1}^n\sum_{\hat l,\hat j,j'=1}^{n+m}(a^{\ts}\nabla\log\rho^*)_l \left[ a^{\ts}_{l\hat l}\frac{\pa}{\pa x_{\hat l}}z^{\ts}_{j\hat j}\frac{\pa f}{\pa x_{\hat j}}z^{\ts}_{jj'}\frac{\pa f}{\pa x_{j'}} \right].\nonumber
\eeaa
\end{thm}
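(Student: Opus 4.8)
The plan is to prove Theorem~\ref{thm1} by a direct computation in the local coordinates $(x_1,\dots,x_{n+m})$, organizing every term of $\Gamma_2(f,f)+\Gamma_2^{z,\rho^*}(f,f)$ by its dependence on the $2$--jet of $f$. Using $\Gamma_1(f,f)=\sum_{i=1}^n[(a^{\ts}\nabla)_if]^2$, $\Gamma_1^z(f,f)=\sum_{k=1}^m[(z^{\ts}\nabla)_kf]^2$, and writing the generator as $Lf=\nabla\cdot(aa^{\ts}\nabla f)-\la a\otimes\nabla a,\nabla f\ra+2\la b,\nabla f\ra$ with $b=-\tfrac12 aa^{\ts}\nabla V$, I would first expand $\tfrac12[L\Gamma_1(f,f)-2\Gamma_1(Lf,f)]$, then $\tfrac12[L\Gamma_1^z(f,f)-2\Gamma_1^z(Lf,f)]$, and finally the two new divergence terms $\div^{\rho^*}_z(\Gamma_{1,\nabla(aa^{\ts})}(f,f))-\div^{\rho^*}_a(\Gamma_{1,\nabla(zz^{\ts})}(f,f))$ in \eqref{Gz2}. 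In each of the first two brackets all terms carrying a third-order derivative of $f$ cancel, so the whole expression is a quadratic form in the vectorized Hessian $X$ and the gradient $\nabla f$; I then split it into the block quadratic in $X$, the block bilinear in $X$ and $\nabla f$, and the block quadratic in $\nabla f$ (including the pieces carrying $\nabla\log\rho^*$ produced by $\div^{\rho^*}_a,\div^{\rho^*}_z$).

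For the $X$--quadratic block, I would check that the contribution of $\Gamma_2(f,f)$ assembles exactly into $X^{\ts}Q^{\ts}QX$ and that the contribution of $\Gamma_2^{z,\rho^*}(f,f)$ (including the new divergence terms) assembles into $X^{\ts}P^{\ts}PX$, using that $(QX)_{ik}=\sum_{\hat i,\hat k}a^{\ts}_{i\hat i}a^{\ts}_{k\hat k}\frac{\pa^2 f}{\pa x_{\hat i}\pa x_{\hat k}}$ and $(PX)_{ik}=\sum_{\hat i,\hat k}z^{\ts}_{i\hat i}a^{\ts}_{k\hat k}\frac{\pa^2 f}{\pa x_{\hat i}\pa x_{\hat k}}$ are the leading parts of the horizontal and mixed Hessians (and that $X^{\ts}P^{\ts}PX$ is insensitive to which factor carries $z$, after using the symmetry of the Hessian). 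For the bilinear block, I would track each product of a first derivative of $a^{\ts}$ or $z^{\ts}$ against $X$ and verify that the total coefficient of $X$ is exactly $F+C+G+Q^{\ts}D+P^{\ts}E$: $C$ and $F$ come from the commutator-type corrections hidden in $\Gamma_1(Lf,f)$ and $\Gamma_1^z(Lf,f)$, $Q^{\ts}D$ and $P^{\ts}E$ from the first-order part $-\la a\otimes\nabla a,\nabla\,\cdot\,\ra$ of $L$ acting inside $\Gamma_1,\Gamma_1^z$, and $G$ from the two divergence terms in \eqref{Gz2}. At this point Assumption~\ref{assumption:main result} enters: it provides $\Lambda_1,\Lambda_2$ with $(Q^{\ts}Q\Lambda_1+P^{\ts}P\Lambda_2)^{\ts}X=(F+C+G+Q^{\ts}D+P^{\ts}E)^{\ts}X$, which is precisely the identity needed to complete the square,
\beaa
&&X^{\ts}Q^{\ts}QX+X^{\ts}P^{\ts}PX+2(F+C+G+Q^{\ts}D+P^{\ts}E)^{\ts}X\\
&&\qquad =\;[X+\Lambda_1]^{\ts}Q^{\ts}Q[X+\Lambda_1]+[X+\Lambda_2]^{\ts}P^{\ts}P[X+\Lambda_2]-\Lambda_1^{\ts}Q^{\ts}Q\Lambda_1-\Lambda_2^{\ts}P^{\ts}P\Lambda_2,
\eeaa
so the first two terms become $|\mathfrak{Hess}_{a,z}^{G}f|^2$ and the two scalars pass into the curvature.

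It then remains to identify the residual, which is purely quadratic in $\nabla f$ together with $-\Lambda_1^{\ts}Q^{\ts}Q\Lambda_1-\Lambda_2^{\ts}P^{\ts}P\Lambda_2$, as $\mathfrak{R}^{G}_{ab}(\nabla f,\nabla f)+\mathfrak{R}_{zb}(\nabla f,\nabla f)+\mathfrak{R}_{\rho^*}(\nabla f,\nabla f)$. Here I would match term by term: the first-derivative terms pairing two derivatives of $a^{\ts}$ with $(a^{\ts}\nabla)f$, together with the $b$--drift contribution, give $\mathfrak{R}_{ab}(\nabla f,\nabla f)$; the analogous terms with one factor of $a^{\ts}$ replaced by $z^{\ts}$ and the corresponding $z$--drift contribution give $\mathfrak{R}_{zb}(\nabla f,\nabla f)$; the terms produced by $\div^{\rho^*}_z$ and $\div^{\rho^*}_a$ acting on $\Gamma_{1,\nabla(aa^{\ts})}(f,f)$ and $\Gamma_{1,\nabla(zz^{\ts})}(f,f)$, in particular those carrying $(z^{\ts}\nabla\log\rho^*)_k$ and $(a^{\ts}\nabla\log\rho^*)_l$, give $\mathfrak{R}_{\rho^*}(\nabla f,\nabla f)$; and the leftover scalars $-\Lambda_1^{\ts}Q^{\ts}Q\Lambda_1-\Lambda_2^{\ts}P^{\ts}P\Lambda_2+D^{\ts}D+E^{\ts}E$, with $D^{\ts}D,E^{\ts}E$ viewed either as raw $\nabla f$--quadratic terms or as the correction incurred when writing $QX$ as $(QX+D)-D$, give $\mathfrak{R}^{G}(\nabla f,\nabla f)$. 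Setting $\mathfrak{R}^{G}_{ab}=\mathfrak{R}^{G}+\mathfrak{R}_{ab}$ yields the stated decomposition. Since the $a$--only sub-case coincides with the Bochner-type formula established in \cite{FL}, the genuinely new content is the bookkeeping of the $z$-- and $\rho^*$--blocks.

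The main obstacle is this bookkeeping rather than any conceptual difficulty: one must fully expand $\nabla\cdot(aa^{\ts}\nabla\,\cdot\,)$ inside $\Gamma_1^z$ and both divergence operators $\div^{\rho^*}_z,\div^{\rho^*}_a$ in coordinates, keep precise track of which products of $\pa a^{\ts}$, $\pa z^{\ts}$, $\pa^2a^{\ts}$, $\pa^2 z^{\ts}$ and $\nabla\log\rho^*$ multiply $X$ and which multiply $\nabla f$, and then confirm two things simultaneously: that the coefficient of $X$ is \emph{literally} $F+C+G+Q^{\ts}D+P^{\ts}E$ (so that Assumption~\ref{assumption:main result} is exactly the right hypothesis to complete the square) and that the $\nabla f$--quadratic remainder reassembles term for term into $\mathfrak{R}^{G}_{ab}+\mathfrak{R}_{zb}+\mathfrak{R}_{\rho^*}$. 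Special care is required for the two terms in \eqref{Gz2}, whose nested divergence structure produces both second-order ($X$) and first-order contributions and is the source of essentially all of $G$ and of $\mathfrak{R}_{\rho^*}$.
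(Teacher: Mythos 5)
Your proposal follows essentially the same route as the paper's own argument: expand $\Gamma_2+\Gamma_2^{z,\rho^*}$ in coordinates, observe that it takes the form $(QX+D)^{\ts}(QX+D)+(PX+E)^{\ts}(PX+E)+2(F+C+G)^{\ts}X$ plus quadratic forms of $\nabla f$, invoke Assumption \ref{assumption:main result} to complete the square in $X$, and sort the residual $\nabla f$--quadratic terms into $\mathfrak{R}^{G}_{ab}+\mathfrak{R}_{zb}+\mathfrak{R}_{\rho^*}$. Your bookkeeping of where $C$, $F$, $G$, $Q^{\ts}D$, $P^{\ts}E$ and the $\nabla\log\rho^*$ terms originate matches the paper, which likewise defers the detailed coordinate computations to \cite{FL}.
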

{\begin{rem}
We remark that Assumption \ref{assumption:main result} is a sufficient and necessary condition for the completing square formula in Theorem \ref{thm1}. In future works, we will study the geometry formulations in Theorem \ref{thm1}, using the Bott connection in sub-Riemannian manifold as in the classical Gamma $z$ calculus. For the concreteness of presentation, later on we will provide several examples, in which we can explicitly find the first order terms, after the completing square step. These examples demonstrate the feasibility for the proposed tensor.  
\end{rem}}
\begin{rem}
 We comment that $\mathfrak{R}^{G}_{ab}+\mathfrak{R}_{zb}+\mathfrak{R}_{\rho^*}$ in the sub-Riemannian manifold plays the role of $\textrm{Ric}-\textrm{Hess}\log\rho^*$ in the Riemannian manifold. If the metric is a Riemannian metric and $z=0$, then these two formulations of curvature tensor coincide. We notice that for the sub-Riemannian manifold, we have the freedom to choose a non-degenerate direction $z$. 
\end{rem}

\begin{proof}
Here we present the main idea of the proof. More computational details are shown in \cite{FL}. By routine calculations, we have 
\begin{equation*}
\begin{split}
    &\Gamma_{2}(f,f)+\Gamma_2^{z,\rho^*}(f,f)\\
    =&(X+D)^{\ts}Q^{\ts}Q(X+D)+(X+E)^{\ts}P^{\ts}P(X+E)+2(F+C+G)^{\ts}X\\
    &+\textrm{Quadratic forms of $\nabla f$}\\
    =&X^\ts Q^{\ts}QX+X^{\ts}P^\ts PX+2(F+C+G+Q^{\ts}D+P^{\ts}E)^{\ts}X\\
    &+\textrm{Quadratic forms of $\nabla f$}.
\end{split}
\end{equation*}
Substituting assumption \ref{assumption:main result} into the above formula, we have
\begin{equation*}
\begin{split}
   &\Gamma_{2}(f,f)+\Gamma_2^{z,\rho^*}(f,f)\\      
       =&X^\ts Q^{\ts}QX+X^{\ts}P^\ts PX+2\Lambda_1^{\ts}Q^{\ts}QX+2\Lambda_2^{\ts}P^\ts P X\\
    &+\textrm{Quadratic forms of $\nabla f$}\\
           =&(X+\Lambda_1)^\ts Q^{\ts}Q(X+\Lambda_1)+(X+\Lambda_2)^{\ts}P^\ts P(X+\Lambda_2)\\
          & -\Lambda_1^{\ts}Q^{\ts}Q\Lambda_1-\Lambda_2^{\ts}P^{\ts}P\Lambda_2+\textrm{Quadratic forms of $\nabla f$}\\
          =&|\mathfrak{Hess}_{a,z}^{G}f|^2
+\mathfrak{R}^{G}_{ab}(\nabla f,\nabla f) +\mathfrak{R}_{zb}(\nabla f,\nabla f)+\mathfrak{R}_{\rho^*}(\nabla f,\nabla f).
\end{split}
\end{equation*}
Here in the last equality, we summarize the squared term involving second order terms as $|\mathfrak{Hess}_{a,z}^{G}f|^2$, and formulates all quadratic forms of $\nabla f$ by $\mathfrak{R}^{G}_{ab}(\nabla f,\nabla f) +\mathfrak{R}_{zb}(\nabla f,\nabla f)+\mathfrak{R}_{\rho^*}(\nabla f,\nabla f)$.
\end{proof}
With $z$--Bochner's formula in hand, we are ready to present the following sub-Riemannian curvature dimension bound. 
\begin{defn}[sub-Riemannian curvature dimension bound]\label{def:CD k n}
We name the generalized curvature-dimension inequality $CD(\kappa, d)$ for degenerate diffusion process generator $L$ by
\begin{equation*}
\Gamma_{2}(f,f)+\Gamma_2^{z,\rho^*}(f,f)\geq \kappa \Gamma_{1}(f,f)+\kappa \Gamma_{1}^z(f,f)+\frac{1}{d}\textrm{tr}(\mathfrak{Hess}_{a,z}f)^2,
\end{equation*}
for any $f\in C^{\infty}(\mathbb{R}^{n+m}).$
In particular, the generalized $CD(\kappa, \infty)$ condition is equivalent to  
\begin{equation*}
\mathfrak{R}^{G}_{ab}(\nabla f, \nabla f) +\mathfrak{R}_{zb}(\nabla f, \nabla f)+\mathfrak{R}_{\rho^*}(\nabla f, \nabla f)\succeq \kappa\Big(\Gamma_1(f,f)+\Gamma_1^z(f,f)\Big).    
\end{equation*}
\end{defn}
Here we summarize all the result as follows:
$$\mathfrak{R}^{G}_{ab} +\mathfrak{R}_{zb}+\mathfrak{R}_{\rho^*}\succeq \kappa(\Gamma_1+\Gamma_1^z)\Rightarrow  \Gamma_2+\Gamma_2^{z,\rho^*}\succeq \kappa(\Gamma_1+\Gamma_1^z)\Rightarrow \textrm{zLSI}.$$
For the simplicity of presentations, we formulate the curvature tensor $\mathfrak{R}^{G}_{ab} +\mathfrak{R}_{zb}+\mathfrak{R}_{\rho^*}$ into a matrix format. Denote $$\mathsf{U}=\Big(
(a^{\ts}\nabla)_1f,\cdots ,(a^{\ts}\nabla)_nf , (z^{\ts}\nabla)_1f,\cdots (z^{\ts}\nabla)_m f
\Big)_{(n+m)\times 1},$$ and denote $\mathbf{I}_{(n+m)\times (n+m)}$ as the identity matrix. In this case, our Ricci curvature tensor forms 
\beaa
\mathfrak{R}_{ab}^G(\nabla f,\nabla f)+\mathfrak{R}_{zb}(\nabla f,\nabla f)+\mathfrak{R}_{\rho^*}(\nabla f,\nabla f)=\mathsf{U}^{\ts}\cd\mathsf{A}\cd\mathsf{U},
\eeaa
where 
\beaa
\mathsf{A}&\succeq& \kappa 
\mathbf{I}_{(n+m)\times(n+m)},\\
\Rightarrow \mathfrak{R}_{ab}^G(\nabla f,\nabla f)+\mathfrak{R}_{zb}(\nabla f,\nabla f)+\mathfrak{R}_{\rho^*}(\nabla f,\nabla f)&\succeq& \kappa (\Gamma_1(f,f)+\Gamma_1^z(f,f)).
\eeaa
Later on, we present analytical formulations of sub--Riemannian Ricci curvature in the form of $A$ in examples. 

\section{Examples}\label{section4}
In this section, we provide several examples for analytical formulations of sub-Riemannian Ricci curvature tensors.
\subsection{Heisenberg group}
In this subsection, we apply our general theory to the well-known example in sub-Riemannian geometry, which is the Heisenberg group. We believe that even for Heisenberg group, the analytical bound for the $z$-LSI  is also new. A related LSI for the horizontal Wiener measure has been studied in  \cite{baudoin2015log}. Recall briefly that the Heisenberg group $\mathbb H^1$ admits left invariant vector fields: $X=\frac{\partial}{\partial x}-\frac{1}{2}y\frac{\partial}{\partial z},\quad Y=\frac{\partial}{\partial y}+\frac{1}{2}x\frac{\partial}{\partial z},\quad Z=\frac{\partial}{\partial z}$. Here $\{X,Y,Z\}$ forms an orthonormal basis for the tangent bundle of $\mathbb H^1$. In this case, $\Vol=1$. In particular, $X$ and $Y$ generate the horizontal distribution $\tau$. To fit into our general  theory from the previous section, we take matrices $a$ and $z$ as below
\beaa
a^{\ts}=\begin{pmatrix}{}1&0&-y/2\\
0&1&x/2
\end{pmatrix},\quad z^{\ts}=(0,0,1).
\eeaa
In particular, we have 
\beaa
a^{\ts}\nabla f=\Big((a^{\ts}\nabla)_1f,(a^{\ts}\nabla)_2f\Big)^{\ts},\quad (a^{\ts}\nabla)_1f=(\frac{\pa f}{\pa x}-\frac{y}{2}\frac{\pa f}{\pa z}),\quad (a^{\ts}\nabla)_2f=(\frac{\pa f}{\pa y}+\frac{x}{2}\frac{\pa f}{\pa z}).
\eeaa 

We have the following proposition for Heisenberg group following Theorem \ref{thm1}.
\begin{prop}\label{prop heisenberg} For any smooth function $f\in C^{\infty}(\mathbb{H}^1)$, one has
\beaa
\Gamma_2(f,f)+\Gamma_2^{z,\pi}(f,f)&=&\|\mathfrak{Hess}_{a,z}f\|^2+\mathfrak{R}(\nabla f,\nabla f),
\eeaa
where 
\beaa
\Lambda_1^{\ts}&=&(0,0,0,0,0,0,0,0,0); \\
\Lambda_2^{\ts}&=&(0,0,0,0,0,0,(a^{\ts}\nabla)_2f,-(a^{\ts}\nabla)_1f,0);\\
&&\mathfrak{R}_{ab}(\nabla f,\nabla f)-\Lambda_1^{\ts}Q^{\ts}Q\Lambda_1-\Lambda_2^{\ts}P^{\ts}P\Lambda_2+D^{\ts}D+E^{\ts}E\\
&=&-\Gamma_1(f,f)+\frac{1}{2}\Gamma_1^z(f,f)-(a^{\ts}\nabla)_1V\pa_z f(a^{\ts}\nabla)_2f+(a^{\ts}\nabla)_2V\pa_z f(a^{\ts}\nabla)_1f\\
&&+\Big[\frac{\pa^2 V}{\pa x\pa x}+\frac{y^2}{4}\frac{\pa^2 V}{\pa z\pa z}-y\frac{\pa^2 V}{\pa x\pa z}\Big] |(a^{\ts}\nabla)_1 f|^2\\
&&+\Big[\frac{\pa^2 V}{\pa y\pa y}+\frac{x^2}{4}\frac{\pa^2 V}{\pa z\pa z}+x\frac{\pa^2 V}{\pa y\pa z}\Big] |(a^{\ts}\nabla)_2 f|^2\\
&&+2\Big[\frac{\pa^2 V}{\pa x\pa y}+\frac{x}{2}\frac{\pa^2 V}{\pa x\pa z}-\frac{y}{2}\frac{\pa^2 V}{\pa y\pa z}-\frac{xy}{4}\frac{\pa^2 V}{\pa z\pa z}\Big] (a^{\ts}\nabla)_1 f(a^{\ts}\nabla)_2 f;\\
\mathfrak{R}_{zb}(\nabla f, \nabla f)&=& \Big(\frac{\pa^2 V}{\pa x\pa z }-\frac{y}{2}\frac{\pa^2 V}{\pa z\pa z}\Big)(a^{\ts}\nabla)_1 f (z^{\ts}\nabla)_1 f+\Big(\frac{\pa^2 V}{\pa y\pa z}+\frac{x}{2}\frac{\pa^2 V}{\pa z\pa z}\Big)(z^{\ts}\nabla)_1 f(a^{\ts}\nabla)_2 f;\\
\mathfrak{R}_{\pi}(\nabla f, \nabla f)&=&0.
\eeaa 
\end{prop}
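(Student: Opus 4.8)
The plan is to specialize the general $z$--Bochner formula of Theorem~\ref{thm1} to the Heisenberg data $a^{\ts}=\begin{pmatrix}1&0&-y/2\\0&1&x/2\end{pmatrix}$, $z^{\ts}=(0,0,1)$, with $\Vol=1$ so that $\rho^*=\frac{1}{Z}e^{-V}$. First I would record the only nonzero partial derivatives of the entries of $a^{\ts}$ and $z^{\ts}$: since $a^{\ts}_{13}=-y/2$ and $a^{\ts}_{23}=x/2$, the nonvanishing derivatives are $\partial_y a^{\ts}_{13}=-\tfrac12$ and $\partial_x a^{\ts}_{23}=\tfrac12$, and all second derivatives of $a^{\ts}$ and all derivatives of $z^{\ts}$ vanish. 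This immediately kills many of the multi-index sums in the definitions of $C,D,E,F,G$ in Notation~\ref{notation}, and in particular the terms of $\mathfrak{R}_{ab}$ and $\mathfrak{R}_{zb}$ that carry a second derivative of $a^{\ts}$ drop out, while the $\mathfrak{R}_{\rho^*}$ tensor vanishes entirely because every summand there contains either a derivative of $z^{\ts}$ or a second derivative of $a^{\ts}$.

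Next I would verify Assumption~\ref{assumption:main result} with the proposed $\Lambda_1,\Lambda_2$. Concretely, I would compute the vectors $Q^{\ts}Q\Lambda_1+P^{\ts}P\Lambda_2$ and $F+C+G+Q^{\ts}D+P^{\ts}E$ paired against the Hessian vector $X$, i.e.\ I would check that the coefficient of each second-order derivative $\partial^2 f/\partial x_{\hat i}\partial x_{\hat k}$ agrees on both sides. The choice $\Lambda_1^{\ts}=(0,-\partial_z f,0,-\partial_z f,0,0,0,0,0)$ is designed so that $Q^{\ts}Q\Lambda_1$ reproduces the mixed $\partial_x\partial_z$ and $\partial_y\partial_z$ contributions coming from $Q^{\ts}D$, and $\Lambda_2^{\ts}=(0,\dots,(a^{\ts}\nabla)_2 f,-(a^{\ts}\nabla)_1 f,0)$ is the analogous correction in the $P$-block capturing the antisymmetric bracket term; here one uses $[X,Y]=Z$ (in the differential-operator convention $X=\partial_x-\tfrac12 y\partial_z$, $Y=\partial_y+\tfrac12 x\partial_z$), which is exactly where the structure constant of the Heisenberg Lie algebra enters.

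Then, having the assumption in hand, Theorem~\ref{thm1} gives the completing-square decomposition automatically, and it remains to extract the explicit quadratic forms in $\nabla f$. I would compute $\mathfrak{R}^{G}(\nabla f,\nabla f)=-\Lambda_1^{\ts}Q^{\ts}Q\Lambda_1-\Lambda_2^{\ts}P^{\ts}P\Lambda_2+D^{\ts}D+E^{\ts}E$, where $D^{\ts}D$ and $E^{\ts}E$ are zero (again because they involve derivatives that vanish on the Heisenberg data), leaving only the two $\Lambda$-terms, which produce the $-\Gamma_1(f,f)+\tfrac12\Gamma_1^z(f,f)$ part after rewriting $|\partial_z f|^2$ and $(a^{\ts}\nabla)_i f$ back in intrinsic form; the $V$-dependent Hessian terms then come from the drift part $\mathfrak{R}_{ab}$ (the $b=-\tfrac12 aa^{\ts}\nabla V$ contribution), organized by expanding $a^{\ts}_{i\hat i}\partial_{\hat i}b_{\hat k}$ in coordinates, and the single surviving derivative of $a^{\ts}$ feeds the remaining pieces of $\mathfrak{R}_{ab}$ and all of $\mathfrak{R}_{zb}$.

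The main obstacle I expect is purely bookkeeping: correctly tracking the index conventions in $Q,P,C,F,G$ (which multi-index labels rows versus columns, and the placement of the transpose on $a$) so that the pairing with $X$ is computed consistently, and then collecting the many $V$-Hessian terms into the stated symmetric coefficients of $|(a^{\ts}\nabla)_1 f|^2$, $|(a^{\ts}\nabla)_2 f|^2$ and the cross term $(a^{\ts}\nabla)_1 f\,(a^{\ts}\nabla)_2 f$. There is no conceptual difficulty once the vanishing pattern of derivatives is exploited; the verification of Assumption~\ref{assumption:main result} is the one place where a genuine (if short) computation is unavoidable, and it is where an error in the $\Lambda$'s would show up as a leftover second-order term.
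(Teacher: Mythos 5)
Your overall route is the same as the paper's: specialize the general framework of Section \ref{section3} to the Heisenberg data, compute $Q,P,C,D,E,F,G$ from Notation \ref{notation}, complete the square in the second-order terms to read off $\Lambda_1,\Lambda_2$ and $\mathfrak{R}^G$, and then evaluate the tensor sums $\mathfrak{R}_{ab},\mathfrak{R}_{zb},\mathfrak{R}_{\rho^*}$ with $b=-\tfrac12 aa^{\ts}\nabla V$. The paper (Lemmas in the appendix) does exactly this, except that it does not verify Assumption \ref{assumption:main result} coefficient-by-coefficient as you propose; it expands $[QX+D]^{\ts}[QX+D]+[PX+E]^{\ts}[PX+E]+2C^{\ts}X$ explicitly and completes squares by hand, which is the same bookkeeping organized differently. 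Your identification of the vanishing pattern (constant $z^{\ts}$, vanishing second derivatives of $a^{\ts}$, hence $\mathcal{I}_1=\cdots=\mathcal{I}_4=0$, $F=G=0$, $E=0$, $\mathfrak{R}_{\rho^*}=0$) matches the paper.

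There is, however, one concrete error that would derail your computation of $\mathfrak{R}^G$: you assert that $D^{\ts}D=0$ "because they involve derivatives that vanish on the Heisenberg data." But $D_{ik}=\sum_{\hat i,\hat k}a^{\ts}_{i\hat i}\,\partial_{x_{\hat i}}a^{\ts}_{k\hat k}\,\partial_{x_{\hat k}}f$ is built from the \emph{first} derivatives $\partial_y a^{\ts}_{13}=-\tfrac12$ and $\partial_x a^{\ts}_{23}=\tfrac12$, which you yourself list as nonvanishing. The paper computes $D=(0,\tfrac12\partial_zf,-\tfrac12\partial_zf,0)^{\ts}$, so $D^{\ts}D=\tfrac12|\partial_zf|^2=\tfrac12\Gamma_1^z(f,f)$; this is precisely the origin of the $+\tfrac12\Gamma_1^z(f,f)$ term in $\mathfrak{R}^G_{ab}$. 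Moreover, your claim that the two $\Lambda$-terms alone yield $-\Gamma_1(f,f)+\tfrac12\Gamma_1^z(f,f)$ does not check out numerically: with the stated vectors one finds $\Lambda_1^{\ts}Q^{\ts}Q\Lambda_1=2|\partial_zf|^2$ and $\Lambda_2^{\ts}P^{\ts}P\Lambda_2=\Gamma_1(f,f)$, so $-\Lambda_1^{\ts}Q^{\ts}Q\Lambda_1-\Lambda_2^{\ts}P^{\ts}P\Lambda_2=-\Gamma_1(f,f)-2\Gamma_1^z(f,f)$. The correct zeroth-order remainder $-\Gamma_1+\tfrac12\Gamma_1^z$ only emerges from the careful term-by-term square completion (the cross terms $2C^{\ts}X$ contribute $-\Gamma_1$ via the two squares $[T\pm(a^{\ts}\nabla)_if]^2$, and the split $[T_2+\tfrac12\partial_zf]^2+[T_2-\tfrac12\partial_zf]^2=2T_2^2+\tfrac12|\partial_zf|^2$ contributes $+\tfrac12\Gamma_1^z$). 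So the first-order accounting in your third paragraph must be redone; as written it would not reproduce the stated $\mathfrak{R}^G_{ab}$.
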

We next formulate the curvature tensor into a matrix format. Denote $$\mathsf{U}=\Big(
(a^{\ts}\nabla)_1f , (a^{\ts}\nabla)_2f , (z^{\ts}\nabla)_1f
\Big)_{3\times 1},$$ and denote $\mathbf{I}_{3\times 3}$ as the identity matrix. 
There exists a symmetric matrix $\mathsf{A}$ such that we can represent the tensor as below.
\beaa
\mathfrak{R}_{ab}^G(\nabla f,\nabla f)+\mathfrak{R}_{zb}(\nabla f,\nabla f)+\mathfrak{R}_{\rho^*}(\nabla f,\nabla f)=\mathsf{U}^{\ts}\cd\mathsf{A}\cd\mathsf{U},
\eeaa
which implies that 
\beaa
\mathsf{A}&\succeq& \kappa 
\mathbf{I}_{3\times 3},\\
\Rightarrow \mathfrak{R}_{ab}^G(\nabla f,\nabla f)+\mathfrak{R}_{zb}(\nabla f,\nabla f)+\mathfrak{R}_{\rho^*}(\nabla f,\nabla f)&\succeq& \kappa (\Gamma_1(f,f)+\Gamma_1^z(f,f)).
\eeaa
In other words, we need to estimate the smallest eigenvalue of matrix $A$. We next present the formulation of matrix $A$ for the Heisenberg group as follows. \\ 
\begin{cor}
 The matrix $\mathsf{A}$ associated  with Heisenberg group has the following form
\beaa
\mathsf{A}_{11}&=& \Big[\frac{\pa^2 V}{\pa x\pa x}+\frac{y^2}{4}\frac{\pa^2 V}{\pa z\pa z}-y\frac{\pa^2 V}{\pa x\pa z}\Big]-1;\\
\mathsf{A}_{22}&=&\Big[\frac{\pa^2 V}{\pa y\pa y}+\frac{x^2}{4}\frac{\pa^2 V}{\pa z\pa z}+x\frac{\pa^2 V}{\pa y\pa z}\Big]-1;\quad \mathsf A_{33}=\frac{1}{2};  \\
\mathsf{A}_{12}&=&\mathsf{A}_{21}=\Big[\frac{\pa^2 V}{\pa x\pa y}+\frac{x}{2}\frac{\pa^2 V}{\pa x\pa z}-\frac{y}{2}\frac{\pa^2 V}{\pa y\pa z}-\frac{xy}{4}\frac{\pa^2 V}{\pa z\pa z}\Big];\\
\mathsf{A}_{13}&=&\mathsf{A}_{31}=\frac{1}{2}(a^{\ts}\nabla)_2V+\frac{1}{2}\Big(\frac{\pa^2 V}{\pa x\pa z }-\frac{y}{2}\frac{\pa^2 V}{\pa z\pa z}\Big);\\
\mathsf{A}_{23}&=&\mathsf{A}_{32}=-\frac{1}{2}(a^{\ts}\nabla)_1V+\frac{1}{2}\Big(\frac{\pa^2 V}{\pa y\pa z}+\frac{x}{2}\frac{\pa^2 V}{\pa z\pa z}\Big).
\eeaa
\end{cor}

\subsection{Displacement group}
In this subsection, we derive the generalized curvature dimension bound for displacement group, which is one example of three dimensional solvable Lie groups. We adapt the general setting from  \cite{baudoin2015subelliptic} below. Denote $\mathfrak{g}$ as the three dimensional solvable Lie algebra and denote $H\subset \mathfrak{g}$ as the horizontal subspace satisfying H\"ormander's condition, then for a given inner product $\la\cd ,\cd\ra$ on $H$, there exists a canonical basis $\{X,Y,Z\}$ for $(\mathfrak{g},H,\la\cd,\cd\ra)$, such that $\{X,Y\}$ forms an orthonormal basis for $H$ and satisfies the following Lie bracket generating condition for parameters $\alpha$ and $\beta\ge 0$:
\[
[X,Y]=Z,\quad [X,Z]=\alpha Y+\beta Z,\quad [Y,Z]=0.
\]
When the parameter $\alpha=0$ and $\beta\neq 0$, the Lie algebra $\mathfrak{g}$ has a faithful representation. In particular, it is shown in \cite{baudoin2015subelliptic} that the elements of $\mathfrak{g}$, in local coordinates $(\theta,x,y)$, corresponds to the following left-invariant differential operators:
\beaa
X=\frac{\partial}{\partial{\theta}},\quad Y=e^{\beta\theta}\frac{\partial}{\pa x}+\frac{\pa}{\pa y},\quad R=-\beta\frac{\pa}{\pa y},
\eeaa
with the following relation
\beaa
[X,Y]=\beta Y+R,\quad [X,R]=0,\quad [Y,R]=0.
\eeaa
In terms of local coordinates $(\theta,x,y)$, we have 
\beaa
X=\begin{pmatrix}
	1\\
	0\\
	0
\end{pmatrix},\quad Y=\begin{pmatrix}
	0\\
	e^{\beta\theta}\\
	1
\end{pmatrix},\quad R=\begin{pmatrix}
	0\\
	0\\
	-\beta 
\end{pmatrix}.
\eeaa
The corresponding Lie group of this special Lie algebra $\mathfrak{g}$ is called displacement group, denoted as $\mathsf G$. We choose $\{X,Y\}$ as the horizontal orthonormal basis for subalgebra $H$.
To fit into the general framework from the previous section, we take 
\beaa
a=(X,Y)=\begin{pmatrix}1&0\\
0&e^{\beta \theta}\\
0&1
\end{pmatrix}, \quad a^{\ts}= \begin{pmatrix}{}
1&0&0\\
0&e^{\beta \theta}&1
\end{pmatrix},\quad z^{\ts}=\begin{pmatrix}{}0&0&-g(\theta,x,y)
\end{pmatrix},
\eeaa
with $g(\theta,x,y)\neq 0$. Our focus here is to derive the curvature tensor in terms of $\rho^*=\frac{1}{Z}e^{-V}\Vol$. In this case, $$\Vol=1.$$  We then use $(aa^{\ts})^{\dd}_{|H}$ as the horizontal metric on $H$. Thus the sub-Riemannian structure is given by $(\mathsf G,H,(aa^{\ts})^{\dd}_{|H})$ and we proceed to derive the generalized curvature dimension bound following our framework in Section \ref{section3}. 
By direct computations, it is easy to show that, for general smooth function $f$, $\Gamma_1(f,\Gamma_1^z(f,f))\neq\Gamma_1^z(f,\Gamma_1(f,f))$. Hence classical Gamma $z$ calculus proposed in \cite{BaudoinGarofalo09} can not be extended for this case to derive zLSI. Thus we need to compute vector $G$ and the tensor term $\mathfrak{R}_{\rho^*}$. We have the following proposition.

Following Theorem \ref{thm1}, we have the following z-Bochner's formula for $\mathsf G$.  
\begin{prop}\label{prop se2} For any smooth function $f\in C^{\infty}(\mathsf G)$, one has
\beaa
\Gamma_2(f,f)+\Gamma_2^{z,\pi}(f,f)&=&\|\mathfrak{Hess}_{a,z}f\|^2+\mathfrak{R}(\nabla f,\nabla f),
\eeaa
where 
\beaa
\Lambda_1^{\ts}&=& (0,\beta\pa_xf,\frac{\beta\pa_yf}{2},\beta\pa_xf,0,0,\frac{\beta\pa_yf}{2},0,-\beta\pa_{\theta}f); \\
\Lambda_2^{\ts}&=& (0,0,0,0,0,0,\lambda_6,0,\lambda_9);\\
\lambda_6&=&\frac{\pa_{\theta}g\pa_yf}{g}-\frac{\beta(a^{\ts}\nabla)_2f}{g^2}-\frac{\pa_{\theta}g\pa_yf}{g};\\ \lambda_9&=&\frac{(a^{\ts}\nabla)_2g\pa_yf}{g}+\frac{\beta \pa_{\theta}f}{g^2}-\frac{(a^{\ts}\nabla)_2g\pa_yf}{g};
\eeaa
And
\beaa
&&\mathfrak{R}_{ab}(\nabla f,\nabla f)-\Lambda_1^{\ts}Q^{\ts}Q\Lambda_1-\Lambda_2^{\ts}P^{\ts}P\Lambda_2+D^{\ts}D+E^{\ts}E)\\
&=&\Gamma_1(\log g,\log g)\Gamma_1^z(f,f)-\beta^2(1+\frac{1}{g^2})\Gamma_1(f,f)+\frac{\beta^2}{2g^2}\Gamma_1^z(f,f)\\
&&+\beta^2e^{\beta \theta}\frac{\pa f}{ \pa x}(a^{\ts}\nabla )_2f +\beta e^{\beta\theta} (a^{\ts}\nabla)_2V\frac{\pa f}{\pa x}(a^{\ts}\nabla )_1 f  +\beta e^{\beta \theta}\frac{\pa V}{\pa x}(a^{\ts}\nabla)_2f(a^{\ts}\nabla)_1f \\
&&+\frac{\pa^2 V}{\pa \theta\pa \theta} |(a^{\ts}\nabla)_1 f|^2+2(e^{\beta\theta}\frac{\pa^2 V}{\pa \theta\pa x}+\frac{\pa^2 V}{\pa \theta \pa y} )(a^{\ts}\nabla)_1 f(a^{\ts}\nabla)_2 f\\
&&+\sum_{\hat i,k'=1}^3a^{\ts}_{2\hat i}a^{\ts}_{2k'}\frac{\pa^2 V}{\pa x_{\hat i}\pa x_{k'}})|(a^{\ts}\nabla)_2 f|^2-\beta e^{\beta\theta}(a^{\ts}\nabla)_1V \frac{\pa f}{\pa x}(a^{\ts}\nabla)_2f;\\
\mathfrak{R}_{zb}(\nabla f,\nabla f)&=& \sum_{i=1}^2\sum_{i',\hat i=1}^{3}  a^{\ts}_{ii'}a^{\ts}_{i \hat i} \frac{\pa^2 z^{\ts}_{13}}{\partial x_{i'}\partial x_{\hat i}}\pa_yf (z^{\ts}\nabla)_1f -\sum_{k=1}^2(a^{\ts}\nabla)_kz^{\ts}_{13}(a^{\ts}\nabla)_kV\pa_yf(z^{\ts}\nabla)_1f
\\ && -g\frac{\pa^2 V}{\pa \theta\pa y}(a^{\ts}\nabla)_1 f(z^{\ts}\nabla)_1 f-g(e^{\beta\theta}\frac{\pa^2 V}{\pa x\pa y}+\frac{\pa^2 V}{\pa y\pa y} )(a^{\ts}\nabla)_2 f(z^{\ts}\nabla)_1 f;\\
\mathfrak{R}_{\pi}(\nabla f,\nabla f)&=& -2\sum_{l=1}^2\sum_{l',\hat l=1}^3a^{\ts}_{ll'}a^{\ts}_{l\hat l}\frac{\pa^2 z^{\ts}_{13}}{\pa x_{l'}\pa x_{\hat l}}\pa_yf (z^{\ts}\nabla)_1f\\
&&-2\Gamma_1(\log \pi,\log g)|(z^{\ts}\nabla)_1f|^2-2\Gamma_1(\log g,\log g)|(z^{\ts}\nabla)_1f|^2.
\eeaa
In particular, we have  
\beaa
\sum_{\hat i,k'=1}^3a^{\ts}_{2\hat i}a^{\ts}_{2k'}\frac{\pa^2 V}{\pa x_{\hat i}\pa x_{k'}}|(a^{\ts}\nabla)_2 f|^2&=&\Big[e^{2\beta\theta}\frac{\pa^2 V}{\pa x\pa x}+2e^{\beta \theta}\frac{\pa^2 V}{\pa x\pa y}+\frac{\pa^2 V}{\pa y\pa y}\Big] |(a^{\ts}\nabla)_2 f|^2;\\
\sum_{i=1}^2\sum_{i',\hat i=1}^{3}  a^{\ts}_{ii'}a^{\ts}_{i \hat i} \frac{\pa^2 z^{\ts}_{13}}{\partial x_{i'}\partial x_{\hat i}}\pa_yf (z^{\ts}\nabla)_1f&=&\Big[\frac{\pa ^2 g}{\pa \theta\pa \theta}+e^{2\beta\theta}\frac{\pa ^2 g}{\pa x\pa x}+\frac{\pa ^2 g}{\pa y\pa y}+2e^{\beta\theta}\frac{\pa ^2 g}{\pa x\pa y}\Big]\frac{|(z^{\ts}\nabla)_1f|^2}{g}.
\eeaa
\end{prop}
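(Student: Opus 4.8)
The plan is to apply Theorem \ref{thm1} ($z$--Bochner's formula) directly to the specific choices of $a$, $z$, $b$, and $\mathrm{Vol}$ dictated by the $\textbf{SE}(2)$ structure, and to carry out the completing-square step explicitly. First I would record all the elementary data: from $a^{\ts}=\begin{pmatrix}1&0&0\\0&e^{\beta\theta}&1\end{pmatrix}$ and $z^{\ts}=(0,0,-g)$ I compute the horizontal and vertical gradients $(a^{\ts}\nabla)_1f=\pa_\theta f$, $(a^{\ts}\nabla)_2f=e^{\beta\theta}\pa_x f+\pa_y f$, $(z^{\ts}\nabla)_1f=-g\,\pa_y f$, together with all first partials $\pa_{x_{i'}}a^{\ts}_{k\hat k}$ (only $\pa_\theta e^{\beta\theta}=\beta e^{\beta\theta}$ is nonzero) and $\pa_{x_{i'}}z^{\ts}_{k\hat k}$ (which involve $\pa_\theta g,\pa_x g,\pa_y g$). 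Since $\mathrm{Vol}=1$, the invariant density is $\rho^*=\frac1Z e^{-V}$, so $\nabla\log\rho^*=-\nabla V$, and the drift is $b=-\tfrac12 aa^{\ts}\nabla V$; these feed into the $b$--terms of $\mathfrak R_{ab}$ and $\mathfrak R_{zb}$, and into the $(a^{\ts}\nabla\log\rho^*)_l$, $(z^{\ts}\nabla\log\rho^*)_k$ factors of $\mathfrak R_{\rho^*}$.

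Next I would assemble the matrices $Q$, $P$ and the vectors $C$, $D$, $E$, $F$, $G$ of Notation \ref{notation} from this data, and verify Assumption \ref{assumption:main result}: that is, exhibit $\Lambda_1,\Lambda_2\in\mathbb R^{9\times1}$ with $(Q^{\ts}Q\Lambda_1+P^{\ts}P\Lambda_2)^{\ts}X=(F+C+G+Q^{\ts}D+P^{\ts}E)^{\ts}X$. The proposal is to read off $\Lambda_1,\Lambda_2$ componentwise by matching the coefficient of each second-derivative entry $\pa^2 f/\pa x_{\hat i}\pa x_{\hat k}$ appearing on the right-hand side; because $a$ has only the single nonconstant entry $e^{\beta\theta}$, the vectors $C$, $D$, $F$ collapse to a few $\beta$--weighted terms, yielding the stated $\Lambda_1^{\ts}=(0,\beta\pa_xf,\tfrac{\beta\pa_yf}{2},\beta\pa_xf,0,0,\tfrac{\beta\pa_yf}{2},0,-\beta\pa_\theta f)$, and similarly the $g$--dependent $G$ produces $\Lambda_2$ with the two nontrivial entries $\lambda_6,\lambda_9$. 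Once $\Lambda_1,\Lambda_2$ are fixed, Theorem \ref{thm1} applies verbatim and gives the decomposition; it then remains to simplify $\mathfrak R^G=-\Lambda_1^{\ts}Q^{\ts}Q\Lambda_1-\Lambda_2^{\ts}P^{\ts}P\Lambda_2+D^{\ts}D+E^{\ts}E$ and to collect the explicit sums defining $\mathfrak R_{ab}$, $\mathfrak R_{zb}$, $\mathfrak R_{\rho^*}$ into the stated closed forms, grouping everything as quadratic forms in $(a^{\ts}\nabla)_1f,(a^{\ts}\nabla)_2f,(z^{\ts}\nabla)_1f$ and writing the appearing second-derivatives of $V$ and $g$ in the $(\theta,x,y)$ coordinates (this is where the two displayed ``In particular'' identities $\sum a^{\ts}_{2\hat i}a^{\ts}_{2k'}\pa^2_{x_{\hat i}x_{k'}}V=e^{2\beta\theta}V_{xx}+2e^{\beta\theta}V_{xy}+V_{yy}$ and its $g$--analogue come from).

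The transverse-symmetry failure claimed in the text, $\Gamma_1(f,\Gamma_1^z(f,f))\neq\Gamma_1^z(f,\Gamma_1(f,f))$, I would check by a short direct computation on a test function (e.g.\ $f$ linear in the coordinates), which shows why the extra term \eqref{new term} in $\Gamma_2^{z,\rho^*}$ — and hence the tensor $\mathfrak R_{\rho^*}$ — is genuinely needed here, unlike in the Heisenberg case where $\mathfrak R_{\rho^*}=0$.

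The main obstacle is bookkeeping rather than conceptual: the vector $G$ and the tensor $\mathfrak R_{\rho^*}$ each involve a four-fold sum over indices in $\{1,2,3\}$ with mixed first and second derivatives of $g$ and of $a$, and one must be careful that the many cancellations (in particular that $\lambda_6,\lambda_9$ telescope so only the $\beta/g^2$ terms survive, and that several $\pa g$--cross terms in $\mathfrak R_{\rho^*}$ cancel against pieces of $\mathfrak R_{zb}$) are tracked correctly; a sign error anywhere propagates into the final matrix $\mathsf A$. I would therefore organize the computation by first isolating all second-order-in-$f$ terms to pin down $\Lambda_1,\Lambda_2$ and $|\mathfrak{Hess}^G_{a,z}f|^2$, then handle the $b$-free quadratic-in-$\nabla f$ terms, then the $b$-terms, and finally the $\rho^*$-terms, cross-checking at the end that setting $g\equiv 1$, $\beta$ arbitrary and $V$ consistent reduces the output to a sanity-check case, and that the total is manifestly a symmetric quadratic form in $\mathsf U$ as asserted in the matrix reformulation following the proof.
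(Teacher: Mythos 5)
Your proposal is correct and follows essentially the same route as the paper's own proof in the appendix: the paper likewise records the $\textbf{SE}(2)$ data, computes $Q,P,C,D,E,F,G$ (Lemma \ref{lemma: vector and mat for se2}), performs the completing-square on the second-order terms to extract $\Lambda_1,\Lambda_2$, $|\mathfrak{Hess}_{a,z}^{G}f|^2$ and $\mathfrak{R}^G$ (Lemma \ref{SE2 hess}), and then evaluates the tensor sums $\mathfrak{R}_{ab},\mathfrak{R}_{zb},\mathfrak{R}_{\rho^*}$ term by term with $b=-\frac{1}{2}aa^{\ts}\nabla V$ (Lemma \ref{SE2 tensor}). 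The only cosmetic difference is that you phrase the identification of $\Lambda_1,\Lambda_2$ as verifying Assumption \ref{assumption:main result} and then invoking Theorem \ref{thm1}, while the paper re-runs the completing-square directly in this concrete case (including the recombination of the two cross-term squares that produces the $\frac{\beta^2}{2}|\pa_yf|^2$ contribution), which amounts to the same computation.
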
{}

Similarly, we formulate the curvature tensor into a matrix format of $\mathsf A$. 
\begin{cor}
The matrix $\mathsf{A}$ associated with the displacement group has the following representation
\beaa
\mathsf A_{11}&=& \frac{\pa^2 V}{\pa \theta\pa \theta}-\beta^2(1+\frac{1}{g^2});\\
\mathsf A_{22}&=& \Big[e^{2\beta\theta}\frac{\pa^2 V}{\pa x\pa x}+2e^{\beta \theta}\frac{\pa^2 V}{\pa x\pa y}+\frac{\pa^2 V}{\pa y\pa y}\Big]-\frac{\beta^2}{g^2}-\beta(a^{\ts}\nabla)_1V;\\
\mathsf A_{33}&=&\frac{\beta^2}{2g^2}-\Gamma_1(\log g,\log g)-2\Gamma_1(\log \pi,\log g)-\Gamma_1(\log g,V)-\frac{1}{g}\Big[\frac{\pa ^2 g}{\pa \theta\pa \theta}+e^{2\beta\theta}\frac{\pa ^2 g}{\pa x\pa x}+\frac{\pa ^2 g}{\pa y\pa y}+2e^{\beta\theta}\frac{\pa ^2 g}{\pa x\pa y}\Big];  \\
\mathsf A_{12}&=&\mathsf A_{21}=\frac{1}{2}\Big(\beta e^{\beta \theta}\frac{\pa V}{\pa x}+2(e^{\beta\theta}\frac{\pa^2 V}{\pa \theta\pa x}+\frac{\pa^2 V}{\pa \theta \pa y} )+\beta (a^{\ts}\nabla)_2V \Big);\\
\mathsf A_{13}&=&\mathsf A_{31}=\frac{1}{2}\Big(\frac{\beta}{g}(a^{\ts}\nabla)_2V-g\frac{\pa^2 V}{\pa \theta\pa y}\Big);\\
\mathsf A_{23}&=&\mathsf A_{32}=-\frac{1}{2}\Big(\frac{\beta}{g}(a^{\ts}\nabla)_1-\frac{\beta^2}{g} \Big)-\frac{1}{2}g(e^{\beta\theta}\frac{\pa^2 V}{\pa x\pa y}+\frac{\pa^2 V}{\pa y\pa y} ).
\eeaa
\end{cor}{}


\subsection{Martinet flat sub-Riemannian structure}
In this part, we apply our result to Martinet flat sub-Riemannian structure, which satisfies bracket generating condition and has non-equiregular sub-Riemannian structure (see \cite{barilari2013formula}). The sub-Riemannian structure is defined on $\mathbb R^3$ through the kernel of one-form $\eta :=dz-\frac{1}{2}y^2dx.$ A global orthonormal basis for the horizontal distribution $\mathcal{H}$ adapt the following differential operator representation, in local coordinates $(x,y,z)$,  
\[
X=\frac{\pa}{\pa x}+\frac{y^2}{2}\frac{\pa}{\pa z},\quad Y=\frac{\pa}{\pa y}.
\]
The commutative relation gives
\beaa
[X,Y]=-yZ,\quad [Y,[X,Y]]=Z,\quad \text{where}\quad Z=\frac{\pa}{\pa z}.
\eeaa
To apply in our framework, we take
\beaa
a=\begin{pmatrix}
	1&0\\
	0&1\\
	\frac{y^2}{2}&0
\end{pmatrix},\quad  a^T=\begin{pmatrix}
	1&0&\frac{y^2}{2}\\
	0&1&0
\end{pmatrix},\quad z^{\ts}=(0,0,1),\quad aa^T=\begin{pmatrix}
	1&0&\frac{y^2}{2}\\
	0&1&0\\
	\frac{y^2}{2}&0&\frac{y^4}{4}
\end{pmatrix}.
\eeaa
Thus the sub-Riemanian structure we consider here has the form $(\mathbb M,\mathcal H, (aa^{\ts})^{\dd}_{|\mathcal{H}})$.

\begin{prop}
In this setting, 
\begin{equation*}
\Vol=e^{-\frac{y^2}{2}},   
\end{equation*}
then 
\[
-aa^{\ts}\nabla \log \Vol=a\otimes\nabla a.
\]
\end{prop}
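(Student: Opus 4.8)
The plan is to prove the identity by an explicit computation in the global coordinates $(x,y,z)$, checking that the two vector-valued functions $-aa^{\ts}\nabla\log\Vol$ and $a\otimes\nabla a$ agree entrywise on all of $\mathbb{R}^3$.

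First I would compute the right-hand side. Since $\Vol=e^{-y^2/2}$, we have $\log\Vol=-\tfrac{y^2}{2}$, hence $\nabla\log\Vol=(0,-y,0)^{\ts}$. Multiplying by the explicit matrix $aa^{\ts}$ recorded in the setup and changing sign yields $-aa^{\ts}\nabla\log\Vol$ as a concrete vector-valued function of $(x,y,z)$.

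Next I would compute the left-hand side directly from its definition in Section \ref{section2}: with $n=2$ and $m=1$ one has $(a\otimes\nabla a)_{\hat k}=\sum_{k=1}^{2}\sum_{k'=1}^{3}a_{\hat k k}\,\frac{\partial a_{k'k}}{\partial x_{k'}}$. The matrix $a$ here has the single non-constant entry $a_{31}=y^2/2$, so each inner double sum collapses to at most one partial derivative, and reading off the three components $\hat k=1,2,3$ produces $a\otimes\nabla a$ explicitly. Equivalently, $a\otimes\nabla a=\sum_{k}(\nabla\cdot a_k)\,a_k$, where $a_k$ denotes the $k$-th column of $a$, so one only needs the Euclidean divergences of the two frame vector fields $X$ and $Y$.

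Finally I would compare the two explicit vectors and verify that they coincide for every $(x,y,z)$, with attention to the Martinet surface $\{y=0\}$ where the sub-Riemannian structure degenerates. Since the whole argument reduces to substituting into the definitions and performing one $3\times 3$ matrix--vector multiplication, there is no deep obstacle; the only point demanding care is matching the index conventions in the double sum defining $a\otimes\nabla a$ with the entries of $a$ and keeping track of which coordinate each partial derivative acts on. As an independent consistency check, one can appeal to the Invariant measure lemma stated above, which characterizes $\Vol$ (up to a multiplicative constant) as the density of the symmetric invariant measure of the driftless diffusion $dX_t=\sqrt{2}\,a\circ dB_t$ on this structure; the identity to be proved is precisely the defining relation for that density.
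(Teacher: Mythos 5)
Your overall strategy --- compute both vectors in the coordinates $(x,y,z)$ and compare them entrywise --- is exactly what the paper does, so the approach is not the problem; the problem is that the computation you describe does not produce the value the proposition needs. Your right-hand side is fine: $\log\Vol=-\tfrac{y^2}{2}$, so $\nabla\log\Vol=(0,-y,0)^{\ts}$, and since the middle row of $aa^{\ts}$ is $(0,1,0)$ one gets $-aa^{\ts}\nabla\log\Vol=(0,y,0)^{\ts}$. But your (faithful) unpacking of the displayed definition, $a\otimes\nabla a=\sum_{k}(\nabla\cdot a_k)\,a_k$ with $a_1=(1,0,\tfrac{y^2}{2})^{\ts}$ and $a_2=(0,1,0)^{\ts}$, gives the zero vector: the only non-constant entry $\tfrac{y^2}{2}$ occupies the $z$-slot but depends only on $y$, so $\nabla\cdot a_1=\partial_z(\tfrac{y^2}{2})=0$ and $\nabla\cdot a_2=0$. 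The two vectors you would obtain are therefore $(0,y,0)^{\ts}$ and $0$, and your final step ``verify that they coincide'' fails.

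The paper's proof simply records $a\otimes\nabla a=(0,y,0)^{\ts}$ without exhibiting the index bookkeeping, so the entire content of the proposition is precisely the computation you have deferred; it cannot be dismissed as presenting ``no deep obstacle.'' To close the gap you must either identify the convention for $a\otimes\nabla a$ (presumably the one actually used in \cite{FL}) under which the Martinet frame yields $(0,y,0)^{\ts}$ rather than $0$, or conclude that with the definition as displayed in Section \ref{section2} the driftless generator is $X^2+Y^2$, which is symmetric with respect to Lebesgue measure, so that the correct weight would be $\Vol=1$. Note also that your proposed ``independent consistency check'' via the invariant-measure lemma would expose exactly this discrepancy rather than resolve it, since it characterizes $\Vol$ through the same quantity $a\otimes\nabla a$.
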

\begin{proof}
\[
a\otimes\nabla a= \begin{pmatrix}
0&
y&
0
\end{pmatrix}^{\ts}, \quad aa^{\ts}\nabla \log\Vol= \begin{pmatrix}
0&
y&
0
\end{pmatrix}^{\ts}.
\]
\end{proof}
\begin{rem}
In this case, $\Vol$ is different from the Popp's volume, which has the form $\frac{1}{|y|}$; see details in \cite{barilari2013formula}. 
\end{rem}

Similar to the previous displacement group case, we have the following identity.
\begin{prop}\label{prop Martinet}  For any smooth function $f\in C^{\infty}(\mathbb{M})$, one has
\beaa
\Gamma_2(f,f)+\Gamma_2^{z,\rho^*}(f,f)&=&|\mathfrak{Hess}_{a,z}^{G}f|^2+\mathfrak{R}^G_{ab}(\nabla f,\nabla f)+\mathfrak{R}_{zb}(\nabla f,\nabla f)+\mathfrak{R}_{\rho^*}(\nabla f,\nabla f),
\eeaa
where 
\beaa
\Lambda_1^{\ts}&=&(0,y\pa_zf/2,0,y\pa_zf/2,0,0,0,0,0); \\
\Lambda_2^{\ts}&=&(0,0,0,0,0,0,-y\pa_yf,\frac{y^3}{2}\pa_zf+y\pa_xf,0);\\
\mathfrak{R}^G_{ab}(\nabla f,\nabla f)&=&\frac{y^2}{2}\Gamma_1^z(f,f)-y^2\Gamma_1(f,f)\\
&&+\frac{\pa f}{\pa z}(a^{\ts}\nabla )_1f+y(a^{\ts}\nabla)_1V \frac{\pa f}{\pa z}(a^{\ts}\nabla)_2f +y \frac{\pa V}{\pa z}(a^{\ts}\nabla)_1f(a^{\ts}\nabla)_2f \\
&&+\sum_{\hat i,k'=1}^3a^{\ts}_{1\hat i}a^{\ts}_{1k'}\frac{\pa^2 V}{\pa x_{\hat i}\pa x_{k'}}|(a^{\ts}\nabla)_1 f|^2+2(\frac{\pa^2 V}{\pa x\pa y}+\frac{y^2}{2}\frac{\pa^2 V}{\pa y \pa z} )(a^{\ts}\nabla)_1 f(a^{\ts}\nabla)_2 f\\
&&+\frac{\pa^2 V}{\pa y\pa y}|(a^{\ts}\nabla)_2 f|^2-y\frac{\pa V}{\pa y}\frac{\pa f}{\pa z}(a^{\ts}\nabla)_1f;\\
\mathfrak{R}_{zb}(\nabla f,\nabla f)&=& (\frac{\pa^2 V}{\pa x\pa z}+\frac{y^2}{2}\frac{\pa^2 V}{\pa z\pa z})(a^{\ts}\nabla)_1 f(z^{\ts}\nabla)_1 f+\frac{\pa^2 V}{\pa y\pa z}(a^{\ts}\nabla)_2 f(z^{\ts}\nabla)_1 f;\\
\mathfrak{R}_{\rho^*}(\nabla f,\nabla f)&=&0.
\eeaa
In particular, we have 
\beaa
\sum_{\hat i,k'=1}^3a^{\ts}_{1\hat i}a^{\ts}_{1k'}\frac{\pa^2 V}{\pa x_{\hat i}\pa x_{k'}}|(a^{\ts}\nabla)_1 f|^2=\Big(\frac{\pa^2 V}{\pa x\pa x}+y^2\frac{\pa^2 V}{\pa x\pa z}+\frac{y^4}{4}\frac{\pa^2 V}{\pa z\pa z} \Big)|(a^{\ts}\nabla)_1 f|^2.
\eeaa 
\end{prop}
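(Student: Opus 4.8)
The plan is to specialize the $z$--Bochner formula of Theorem~\ref{thm1} to the Martinet data $a^{\ts}=\begin{pmatrix}1&0&y^2/2\\0&1&0\end{pmatrix}$, $z^{\ts}=(0,0,1)$, so that $n=2$, $m=1$, $n+m=3$, and then carry out the completing-the-square step explicitly. First I would record the only nontrivial derivatives of the frame: among the entries of $a^{\ts}$ only $a^{\ts}_{13}=y^2/2$ is non-constant, with $\pa_y a^{\ts}_{13}=y$ and $\pa_y^2 a^{\ts}_{13}=1$, while every other first or second partial of $a^{\ts}$, and every partial of the constant row $z^{\ts}$, vanishes. From the preceding proposition $\Vol=e^{-y^2/2}$, so $\log\rho^*=-V-\tfrac12 y^2-\log Z$; hence $(z^{\ts}\nabla\log\rho^*)_1=-\pa_z V$ and $(a^{\ts}\nabla\log\rho^*)_i=-(a^{\ts}\nabla)_i V-\tfrac12(a^{\ts}\nabla)_i(y^2)$ with $(a^{\ts}\nabla)_1(y^2)=0$, $(a^{\ts}\nabla)_2(y^2)=2y$.

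Next I would assemble the objects of Notation~\ref{notation}: the matrices $Q\in\mathbb R^{4\times 9}$ and $P\in\mathbb R^{2\times 9}$, the vectors $C,F,G\in\mathbb R^{9}$, $D\in\mathbb R^{4}$, $E\in\mathbb R^{2}$, and the Hessian vectorization $X\in\mathbb R^{9}$. Since $z^{\ts}$ is constant, $E=0$ and $F=0$, and each summand of the formula for $\mathfrak{R}_{\rho^*}$ in Theorem~\ref{thm1} carries either a derivative of $z^{\ts}$ or a factor $\pa a^{\ts}_{i\hat i}/\pa z$, hence is identically zero; this is exactly why $\mathfrak{R}_{\rho^*}(\nabla f,\nabla f)=0$ in the Martinet case. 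The only genuinely new contribution is the vector $G$, whose nonzero entries are produced by $\pa_y a^{\ts}_{13}=y$; I would compute $C$, $D$, and $G$ directly.

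The crux is verifying Assumption~\ref{assumption:main result}, namely exhibiting $\Lambda_1,\Lambda_2\in\mathbb R^{9}$ with
\[
(Q^{\ts}Q\Lambda_1+P^{\ts}P\Lambda_2)^{\ts}X=(C+G+Q^{\ts}D)^{\ts}X
\]
(using $E=F=0$). The decisive observation is that this identity need only hold after pairing with $X$, the vectorization of a \emph{symmetric} Hessian, so $\Lambda_1,\Lambda_2$ are free modulo vectors annihilating symmetric tensors; exploiting this, I would match the coefficient of each monomial $\pa^2 f/\pa x_{\hat i}\pa x_{\hat k}$ on the two sides and read off $\Lambda_1^{\ts}=(0,y\pa_z f/2,0,y\pa_z f/2,0,0,0,0,0)$ and $\Lambda_2^{\ts}=(0,0,0,0,0,0,-y\pa_y f,\tfrac{y^3}{2}\pa_z f+y\pa_x f,0)$. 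This bookkeeping is the main obstacle: it is finite but delicate, and it is where the non-equiregular (``step~$3$'') nature of the Martinet structure enters, through the factors of $y$ and $y^3$.

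Finally, with $\Lambda_1,\Lambda_2$ fixed I would substitute into the curvature formulas of Theorem~\ref{thm1}: $|\mathfrak{Hess}_{a,z}^{G}f|^2=[X+\Lambda_1]^{\ts}Q^{\ts}Q[X+\Lambda_1]+[X+\Lambda_2]^{\ts}P^{\ts}P[X+\Lambda_2]$, $\mathfrak{R}^{G}(\nabla f,\nabla f)=-\Lambda_1^{\ts}Q^{\ts}Q\Lambda_1-\Lambda_2^{\ts}P^{\ts}P\Lambda_2+D^{\ts}D$, together with the explicit sums defining $\mathfrak{R}_{ab}$ and $\mathfrak{R}_{zb}$, and then collapse everything using $b=-\tfrac12 aa^{\ts}\nabla V$. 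All $V$--derivative terms come from the $b$--dependent lines of $\mathfrak{R}_{ab}$ and $\mathfrak{R}_{zb}$; regrouping the resulting quadratic form in the basis $(a^{\ts}\nabla)_1 f$, $(a^{\ts}\nabla)_2 f$, $\pa_z f=(z^{\ts}\nabla)_1 f$ and using the derivative facts above yields the stated $\mathfrak{R}^{G}_{ab}$ and $\mathfrak{R}_{zb}$; the concluding ``in particular'' identity is just the expansion of $\sum_{\hat i,k'}a^{\ts}_{1\hat i}a^{\ts}_{1k'}\pa^2_{x_{\hat i}x_{k'}}V$ with row $a^{\ts}_{1\cdot}=(1,0,y^2/2)$ and $V_{xz}=V_{zx}$. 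Beyond the Assumption~\ref{assumption:main result} verification I expect only routine algebraic simplification, parallel to the general computations in \cite{FL}.
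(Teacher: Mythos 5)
Your proposal follows essentially the same route as the paper: specialize Theorem \ref{thm1} to the Martinet data, compute the combinatorial objects of Notation \ref{notation}, verify Assumption \ref{assumption:main result} by matching coefficients against the symmetric Hessian (equivalently, completing the square as in Lemma \ref{martinet hess}), and then evaluate the remaining first-order sums term by term with $b=-\tfrac12 aa^{\ts}\nabla V$, exactly as in Lemmas \ref{martinet} and \ref{tensor martinet}. The stated $\Lambda_1,\Lambda_2$ and the resulting $\mathfrak{R}^{G}=-\Lambda_1^{\ts}Q^{\ts}Q\Lambda_1-\Lambda_2^{\ts}P^{\ts}P\Lambda_2+D^{\ts}D$ check out, as does your argument for $\mathfrak{R}_{\rho^*}=0$.

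One assertion is wrong, though it is self-correcting: you claim that $G$ is the ``only genuinely new contribution,'' with nonzero entries produced by $\pa_y a^{\ts}_{13}=y$. In fact $G=0$ here (as is $F$): every summand of $G$ either differentiates the constant matrix $z^{\ts}$ or differentiates $a^{\ts}$ in the direction selected by $z^{\ts}_{1j'}$, i.e.\ in the $\pa_z$ direction, and $a^{\ts}$ depends only on $y$. The Martinet-specific terms generated by $\pa_y a^{\ts}_{13}=y$ instead live in $C$ and $D$ (and hence in the cross terms of the square), which is where the factors $y\pa_xf$ and $\tfrac{y^3}{2}\pa_zf$ in $\Lambda_2$ come from. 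Since you intend to compute $C$, $D$, $G$ directly, the computation would expose this and the rest of the argument goes through unchanged, but as written the claim misattributes the source of the new terms.
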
{}

Similarly, we summarize the sub-Riemannian Ricci tensor in terms of $\mathsf A$ as follows. 
\begin{cor} The matrix $\mathsf{A}$ associated  with Martinet sub-Riemannian structure has the following form
\beaa
\mathsf A_{11}&=&\Big(\frac{\pa^2 V}{\pa x\pa x}+y^2\frac{\pa^2 V}{\pa x\pa z}+\frac{y^4}{4}\frac{\pa^2 V}{\pa z\pa z} \Big)-y^2;\\
\mathsf A_{22}&=&\frac{\pa^2 V}{\pa y\pa y}-y^2;\quad \mathsf A_{33}=\frac{y^2}{2};  \\
\mathsf A_{12}&=&\mathsf A_{21}=\frac{y}{2}\frac{\pa V}{\pa z}+(\frac{\pa^2 V}{\pa x\pa y}+\frac{y^2}{2}\frac{\pa^2 V}{\pa y\pa z});\\
\mathsf A_{13}&=&\mathsf A_{31}=\frac{1}{2}-\frac{y}{2}\frac{\pa V}{\pa y}+\frac{1}{2}(\frac{\pa^2 V}{\pa x\pa z}+\frac{y^2}{2}\frac{\pa^2 V}{\pa z\pa z}) ;\quad \mathsf A_{23}=\mathsf A_{32}=\frac{1}{2}y(a^{\ts}\nabla)_1V+\frac{1}{2}\frac{\pa^2 V}{\pa y\pa z}.
\eeaa
\end{cor}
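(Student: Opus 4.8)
The plan is to obtain $\mathsf{A}$ by re-packaging the three curvature tensors of Proposition~\ref{prop Martinet} into the bilinear form $\mathsf{U}^{\ts}\mathsf{A}\,\mathsf{U}$ set up at the end of Section~\ref{section3}. Recall that the $z$--Bochner formula of Theorem~\ref{thm1} splits $\Gamma_2(f,f)+\Gamma_2^{z,\rho^*}(f,f)$ into $|\mathfrak{Hess}_{a,z}^{G}f|^2$ plus $\mathfrak{R}^G_{ab}+\mathfrak{R}_{zb}+\mathfrak{R}_{\rho^*}$; the matrix $\mathsf{A}$ concerns only the latter group, and for the Martinet structure $\mathfrak{R}_{\rho^*}(\nabla f,\nabla f)=0$, so only $\mathfrak{R}^G_{ab}$ and $\mathfrak{R}_{zb}$ enter. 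With $z^{\ts}=(0,0,1)$ one has $(z^{\ts}\nabla)_1 f=\pa_z f$, and together with $(a^{\ts}\nabla)_1 f=\pa_x f+\tfrac{y^2}{2}\pa_z f$ and $(a^{\ts}\nabla)_2 f=\pa_y f$ these three functionals are pointwise linearly independent, so any quadratic form in $\nabla f$ appearing in Proposition~\ref{prop Martinet} can be written in the coordinates $\mathsf{U}=\big((a^{\ts}\nabla)_1 f,\,(a^{\ts}\nabla)_2 f,\,\pa_z f\big)^{\ts}$.

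First I would rewrite the ``metric'' contributions using $\Gamma_1(f,f)=|(a^{\ts}\nabla)_1 f|^2+|(a^{\ts}\nabla)_2 f|^2$ and $\Gamma_1^z(f,f)=|\pa_z f|^2$; in particular the term $\tfrac{y^2}{2}\Gamma_1^z(f,f)-y^2\Gamma_1(f,f)$ in $\mathfrak{R}^G_{ab}$ accounts for the $-y^2$ in $\mathsf{A}_{11},\mathsf{A}_{22}$ and the full value $\mathsf{A}_{33}=\tfrac{y^2}{2}$. Every remaining term in $\mathfrak{R}^G_{ab}$ and $\mathfrak{R}_{zb}$ is already displayed as a scalar coefficient times one of $|(a^{\ts}\nabla)_1 f|^2$, $|(a^{\ts}\nabla)_2 f|^2$, $(a^{\ts}\nabla)_1 f\,(a^{\ts}\nabla)_2 f$, $(a^{\ts}\nabla)_1 f\,\pa_z f$, or $(a^{\ts}\nabla)_2 f\,\pa_z f$, the objects $(a^{\ts}\nabla)_1 V=\pa_x V+\tfrac{y^2}{2}\pa_z V$, $\pa_y V$, etc., being scalar functions rather than part of the quadratic form. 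I would also substitute the identity recorded in Proposition~\ref{prop Martinet} that expands $\sum a^{\ts}_{1\hat i}a^{\ts}_{1k'}\tfrac{\pa^2 V}{\pa x_{\hat i}\pa x_{k'}}$.

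Then it remains to collect like terms, using that $\mathsf{U}^{\ts}\mathsf{A}\,\mathsf{U}=\sum_i \mathsf{A}_{ii}\mathsf{U}_i^2+2\sum_{i<j}\mathsf{A}_{ij}\mathsf{U}_i\mathsf{U}_j$ by symmetry: the coefficient of $|(a^{\ts}\nabla)_1 f|^2$ is $\mathsf{A}_{11}$ (gathering $\tfrac{\pa^2 V}{\pa x\pa z}+\tfrac{y^2}{2}\tfrac{\pa^2 V}{\pa z\pa z}$ from $\mathfrak{R}_{zb}$, $\tfrac{\pa^2 V}{\pa x\pa x}+y^2\tfrac{\pa^2 V}{\pa x\pa z}+\tfrac{y^4}{4}\tfrac{\pa^2 V}{\pa z\pa z}$ from $\mathfrak{R}^G_{ab}$, and $-y^2$), likewise for $\mathsf{A}_{22},\mathsf{A}_{33}$; and each cross term gets halved, so $y\,\tfrac{\pa V}{\pa z}\,(a^{\ts}\nabla)_1 f(a^{\ts}\nabla)_2 f$, $2(\tfrac{\pa^2 V}{\pa x\pa y}+\tfrac{y^2}{2}\tfrac{\pa^2 V}{\pa y\pa z})(a^{\ts}\nabla)_1 f(a^{\ts}\nabla)_2 f$ and $\tfrac{\pa^2 V}{\pa y\pa z}(a^{\ts}\nabla)_1 f(a^{\ts}\nabla)_2 f$ combine into $\mathsf{A}_{12}$, the terms $\pa_z f\,(a^{\ts}\nabla)_1 f$ and $-y\,\tfrac{\pa V}{\pa y}\,\pa_z f\,(a^{\ts}\nabla)_1 f$ into $\mathsf{A}_{13}=\tfrac12-\tfrac{y}{2}\tfrac{\pa V}{\pa y}$, and $y\,(a^{\ts}\nabla)_1 V\,\pa_z f\,(a^{\ts}\nabla)_2 f$ into $\mathsf{A}_{23}=\tfrac12 y\,(a^{\ts}\nabla)_1 V$.

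The computation is purely algebraic; the only point requiring care --- and what I would regard as the main obstacle --- is confirming that the ``quadratic forms of $\nabla f$'' produced in the completing-square step of Theorem~\ref{thm1} genuinely close up inside $\mathrm{span}\{(a^{\ts}\nabla)_1 f,(a^{\ts}\nabla)_2 f,\pa_z f\}$, with no residual bare $\pa_x f$ or $\pa_y f$ left over and no mishandling of the factor-of-two convention. This is guaranteed because $\{X,Y,Z\}$ is a global frame, so $\pa_x f=(a^{\ts}\nabla)_1 f-\tfrac{y^2}{2}\pa_z f$ and $\pa_y f=(a^{\ts}\nabla)_2 f$, and it is already reflected in the structure of Proposition~\ref{prop Martinet}; symmetry $\mathsf{A}_{ij}=\mathsf{A}_{ji}$ is automatic since $\mathfrak{R}^G_{ab}+\mathfrak{R}_{zb}+\mathfrak{R}_{\rho^*}$ is a quadratic form in $\mathsf{U}$.
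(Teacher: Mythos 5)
Your proposal is correct and follows exactly the route the paper intends: the corollary is a direct reorganization of the quadratic forms in Proposition \ref{prop Martinet} into the symmetric bilinear form $\mathsf{U}^{\ts}\mathsf{A}\,\mathsf{U}$ with $\mathsf{U}=\big((a^{\ts}\nabla)_1f,(a^{\ts}\nabla)_2f,(z^{\ts}\nabla)_1f\big)$, splitting $\Gamma_1$ and $\Gamma_1^z$ onto the diagonal and halving each cross-term coefficient. All six entries check out against the proposition, so nothing further is needed.
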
{}



\section{Appendix}\label{section4}
In appendix, we provide the derivation details for our examples.  
\subsection{Proof Of Proposition \ref{prop heisenberg}}
The proof of Proposition of \ref{prop heisenberg} follows from the following three lemmas.
\begin{lem}
For Heisenberg group, we have 
\beaa
Q&=&\left(
\begin{array}{ccccccccc}
 1 & 0 & -\frac{y}{2} & 0 & 0 & 0 & -\frac{y}{2} & 0 & \frac{y^2}{4} \\
 0 & 1 & \frac{x}{2} & 0 & 0 & 0 & 0 & -\frac{y}{2} & -\frac{x y}{4} \\
 0 & 0 & 0 & 1 & 0 & -\frac{y}{2} & \frac{x}{2} & 0 & -\frac{x y}{4} \\
 0 & 0 & 0 & 0 & 1 & \frac{x}{2} & 0 & \frac{x}{2} & \frac{x^2}{4} \\
\end{array}
\right);\\
P&=&\left(
\begin{array}{ccccccccc}
 0 & 0 & 0 & 0 & 0 & 0 & 1 & 0 & -\frac{y}{2} \\
 0 & 0 & 0 & 0 & 0 & 0 & 0 & 1 & \frac{\text{x}}{2} \\
\end{array}
\right);\\
D&=&(0,\frac{1}{2}\pa_zf,-\frac{1}{2}\pa_zf,0)^{\ts};\quad E^{\ts}=(0,0);\quad F^{\ts}=G^{\ts}=(0,0,0,0,0,0,0,0,0);\\
C^T&=&(0,0,\frac{x}{4}\pa_zf+\frac{1}{2}\pa_y f,0,0,\frac{y}{4}\pa_z f-\frac{1}{2}\pa_x f,\frac{x}{4}\pa_zf+\frac{1}{2}\pa_yf,\frac{y}{4}\pa_zf-\frac{1}{2}\pa_xf,-\frac{y}{2}\pa_yf-\frac{x}{2}\pa_xf).
\eeaa
\end{lem}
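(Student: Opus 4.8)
The plan is to substitute the explicit Heisenberg data into the formulas of Notation~\ref{notation} and carry out the resulting finite computations. Concretely, here $n=2$, $m=1$, $(x_1,x_2,x_3)=(x,y,z)$, $\Vol=1$, and
\[
a^{\ts}=\begin{pmatrix}1&0&-y/2\\ 0&1&x/2\end{pmatrix},\qquad z^{\ts}=(0,0,1).
\]
Three structural observations keep the bookkeeping short: (a) $z^{\ts}$ is a constant matrix, so every partial derivative of an entry of $z^{\ts}$ vanishes; (b) no entry of $a^{\ts}$ depends on $z=x_3$; and (c) the only nonvanishing first partials of entries of $a^{\ts}$ are $\partial_{x_2}a^{\ts}_{13}=-\tfrac12$ and $\partial_{x_1}a^{\ts}_{23}=\tfrac12$.

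First I would assemble $Q$ and $P$. By definition $Q_{ik\hat i\hat k}=a^{\ts}_{i\hat i}a^{\ts}_{k\hat k}$ and $P_{ik\hat i\hat k}=z^{\ts}_{i\hat i}a^{\ts}_{k\hat k}$, so each matrix is read off by forming the prescribed products, ordering the rows by the pair $(i,k)$ and the columns by the pair $(\hat i,\hat k)$ in the order fixed in Notation~\ref{notation}. Since $z^{\ts}$ has the single nonzero entry $z^{\ts}_{13}=1$, the matrix $P$ reduces to the two rows $(0,0,0,0,0,0,a^{\ts}_{11},a^{\ts}_{12},a^{\ts}_{13})$ and $(0,0,0,0,0,0,a^{\ts}_{21},a^{\ts}_{22},a^{\ts}_{23})$, which is the claimed form.

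Next I would dispose of the vectors $E$, $F$, $G$. Every summand in their definitions carries either a factor $\partial z^{\ts}_{\cdot\cdot}/\partial x_\cdot$, which vanishes by (a), or an $a^{\ts}$-derivative $\partial a^{\ts}_{\cdot\cdot}/\partial x_{j'}$ accompanied by a factor $z^{\ts}_{\cdot j'}$ forcing $j'=3$, in which case the derivative vanishes by (b); hence $E=F=G=0$. For $D_{ik}=\sum_{\hat i,\hat k}a^{\ts}_{i\hat i}\,\partial_{x_{\hat i}}a^{\ts}_{k\hat k}\,\partial_{x_{\hat k}}f$, fact~(c) forces $(\hat i,\hat k)=(2,3)$ when $k=1$ and $(\hat i,\hat k)=(1,3)$ when $k=2$, after which the factor $a^{\ts}_{i\hat i}$ localizes $i$; this produces $D=(0,\tfrac12\partial_zf,-\tfrac12\partial_zf,0)^{\ts}$.

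The only genuinely laborious step is $C$, with $C_{\hat i\hat k}=\sum_{i,k=1}^2\sum_{i'=1}^3\big(a^{\ts}_{i\hat i}a^{\ts}_{ii'}\,\partial_{x_{i'}}a^{\ts}_{k\hat k}\,(a^{\ts}\nabla)_kf-a^{\ts}_{ki'}a^{\ts}_{i\hat k}\,\partial_{x_{i'}}a^{\ts}_{i\hat i}\,(a^{\ts}\nabla)_kf\big)$. I would treat its two summands separately: in the first, (c) forces $(k,\hat k,i')\in\{(1,3,2),(2,3,1)\}$, and in the second, $(i,\hat i,i')\in\{(1,3,2),(2,3,1)\}$; in each case the surviving factor $a^{\ts}_{i\,\cdot}$ (resp.\ $a^{\ts}_{k\,\cdot}$) restricts the remaining indices and leaves, for each admissible $(\hat i,\hat k)$, a linear combination of $(a^{\ts}\nabla)_1f$ and $(a^{\ts}\nabla)_2f$. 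Collecting the contributions and rewriting with $(a^{\ts}\nabla)_1f=\partial_xf-\tfrac y2\partial_zf$, $(a^{\ts}\nabla)_2f=\partial_yf+\tfrac x2\partial_zf$ gives the stated $C$; the one place requiring care is the $(\hat i,\hat k)=(3,3)$ entry, where both summands contribute and the $\partial_zf$ cross-terms must cancel to leave $-\tfrac x2\partial_xf-\tfrac y2\partial_yf$. The main obstacle throughout is purely organizational — respecting the index-ordering conventions of the nine-dimensional vectors and matrices and tracking the sparsity patterns of $a^{\ts}$ and its derivatives — and no analytic input beyond elementary differentiation is needed.
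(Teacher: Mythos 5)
Your proposal is correct and is exactly the intended argument: the paper states this lemma without proof as a routine instantiation of Notation~\ref{notation}, and your systematic substitution (using the constancy of $z^{\ts}$, the $z$-independence of $a^{\ts}$, and the fact that only $\partial_y a^{\ts}_{13}=-\tfrac12$ and $\partial_x a^{\ts}_{23}=\tfrac12$ survive) reproduces all the stated entries, including the cancellation of the $\partial_z f$ cross-terms in the $(3,3)$ component of $C$.
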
{}
\begin{lem}\label{heisenberg hess} On $\mathbb H^1$, vectors $F$ and $G$ are zero vectors, we have
\beaa
[QX+D]^{\ts}[QX+D]+[PX+E]^{\ts}[PX+E]+2C^{\ts}X
=|\mathfrak{Hess}_{a,z}^{G}f|^2+\mathfrak{R}^{G}(\nabla f,\nabla f).
\eeaa
In particular, we have 
\beaa
|\mathfrak{Hess}_{a,z}^{G}f|^2&=&[X+\Lambda_1]^{\ts}Q^{\ts}Q[X+\Lambda_1]+[X+\Lambda_2]^{\ts}P^{\ts}P[X+\Lambda_2];\\
\Lambda_1^{\ts}&=&(0,0,0,0,0,0,0,0,0); \\
\Lambda_2^{\ts}&=&(0,0,0,0,0,0,(a^{\ts}\nabla)_2f,-(a^{\ts}\nabla)_1f,0);\\
\mathfrak{R}^{G}(\nabla f,\nabla f)&=&-\Gamma_1(f,f)+\frac{1}{2}\Gamma_1^z(f,f).
\eeaa
\end{lem}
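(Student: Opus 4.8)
The statement is the Heisenberg instance of the completing-square identity already established in the proof of Theorem~\ref{thm1}, so the only genuine work is to feed in the explicit data of Lemma~1 and to check that Assumption~\ref{assumption:main result} holds with the vectors $\Lambda_1,\Lambda_2$ displayed in the statement. First I would record from Lemma~1 the matrices $Q\in\mathbb R^{4\times 9}$, $P\in\mathbb R^{2\times 9}$ and the vectors $C,D$, together with the vanishing $E=0$ and $F=G=0$; these last hold because $z^{\ts}=(0,0,1)$ is constant and the entries of $a^{\ts}$ do not depend on the $z$-variable, so every term in the definitions of $E$, $F$, $G$ carries a derivative that kills it. With $E=F=G=0$, Assumption~\ref{assumption:main result} collapses to the single requirement that $(Q^{\ts}Q\Lambda_1+P^{\ts}P\Lambda_2)^{\ts}X=(C+Q^{\ts}D)^{\ts}X$ for every symmetric vectorized Hessian $X$.

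The core of the argument is this verification. I would compute $Q^{\ts}Q\Lambda_1$, $P^{\ts}P\Lambda_2$ and $Q^{\ts}D$ entrywise — each is cheap because $\Lambda_1$, $\Lambda_2$ and $D$ are supported on only two or three coordinates, so the products are short combinations of columns of $Q^{\ts}Q$, $P^{\ts}P$ and $Q^{\ts}$ — and then contract both sides with $X$, collecting the coefficient of each second derivative $\partial^2_{x_ix_j}f$ after symmetrizing in $i\leftrightarrow j$. Matching these coefficients is exactly what pins down the announced $\Lambda_1=(0,-\partial_zf,0,-\partial_zf,0,0,0,0,0)^{\ts}$ and $\Lambda_2=(0,\dots,0,(a^{\ts}\nabla)_2f,-(a^{\ts}\nabla)_1f,0)^{\ts}$; one should keep in mind that $\Lambda_1,\Lambda_2$ are determined only modulo the kernel of $X\mapsto(Q^{\ts}Q\Lambda_1+P^{\ts}P\Lambda_2)^{\ts}X$ on the symmetric subspace, and the stated vectors are the most economical representatives.

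Granting the Assumption, the identity is pure algebra, identical to the completing-square step inside the proof of Theorem~\ref{thm1}: expand $[QX+D]^{\ts}[QX+D]+[PX+E]^{\ts}[PX+E]+2C^{\ts}X=X^{\ts}Q^{\ts}QX+X^{\ts}P^{\ts}PX+2(C+Q^{\ts}D+P^{\ts}E)^{\ts}X+D^{\ts}D+E^{\ts}E$, replace the cross term using the Assumption, and complete the square to obtain $[X+\Lambda_1]^{\ts}Q^{\ts}Q[X+\Lambda_1]+[X+\Lambda_2]^{\ts}P^{\ts}P[X+\Lambda_2]$ plus the leftover $\mathfrak R^{G}(\nabla f,\nabla f)=-\Lambda_1^{\ts}Q^{\ts}Q\Lambda_1-\Lambda_2^{\ts}P^{\ts}P\Lambda_2+D^{\ts}D+E^{\ts}E$. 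It then remains to simplify this residual: from Lemma~1 one reads off $Q\Lambda_1$, $P\Lambda_2$ and $D$ as explicit vectors in the first derivatives of $f$, and, using $(a^{\ts}\nabla)_1f=\partial_xf-\tfrac y2\partial_zf$, $(a^{\ts}\nabla)_2f=\partial_yf+\tfrac x2\partial_zf$ and $z^{\ts}\nabla f=\partial_zf$, the combination $-|Q\Lambda_1|^2-|P\Lambda_2|^2+|D|^2$ collapses to $-\Gamma_1(f,f)+\tfrac12\Gamma_1^z(f,f)$, as claimed.

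I expect the verification of Assumption~\ref{assumption:main result} to be the main obstacle: it is a coefficient-matching identity among $9$-component vectors that is required only after contraction with a \emph{symmetric} $X$, so the bookkeeping must symmetrize consistently on both sides rather than demand equality of the raw vectors. By contrast, the completing-the-square step and the final collapse of the residual quadratic form are mechanical.
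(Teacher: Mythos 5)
Your overall strategy is the right one and is essentially the paper's own proof recast at the matrix level: the paper expands $[QX+D]^{\ts}[QX+D]+[PX+E]^{\ts}[PX+E]+2C^{\ts}X$ into explicit squares of second derivatives and completes the square by hand, while you route everything through Assumption~\ref{assumption:main result} and the residual formula $\mathfrak{R}^{G}=-\Lambda_1^{\ts}Q^{\ts}Q\Lambda_1-\Lambda_2^{\ts}P^{\ts}P\Lambda_2+D^{\ts}D+E^{\ts}E$ of Theorem~\ref{thm1}. The reduction of the Assumption to $(Q^{\ts}Q\Lambda_1+P^{\ts}P\Lambda_2)^{\ts}X=(C+Q^{\ts}D)^{\ts}X$, and the observation that $E=0$ and $F=G=0$, are both correct.

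However, the step you defer to computation --- ``matching these coefficients is exactly what pins down the announced $\Lambda_1$'' --- fails, and it fails in a way your own bookkeeping would expose. Since $D=(0,\tfrac12\pa_zf,-\tfrac12\pa_zf,0)^{\ts}$ and rows $2$ and $3$ of $Q$ give the same value $(QX)_2=(QX)_3$ when contracted with a symmetric $X$, one has $(Q^{\ts}D)^{\ts}X=D^{\ts}QX=0$ identically; moreover $C_2=C_4=0$, so the right-hand side of the Assumption carries no $\pa^2_{xy}f$ term at all. The announced $\Lambda_1$ gives $Q\Lambda_1=-\pa_zf\,(0,1,1,0)^{\ts}$, hence $(Q^{\ts}Q\Lambda_1)^{\ts}X=-2\pa_zf\,(QX)_2\neq 0$: the stated $\Lambda_1$ does not satisfy the Assumption, and it is \emph{not} congruent to a valid choice modulo the kernel you invoke (that kernel requires $(Q\Lambda_1)_2+(Q\Lambda_1)_3=0$, which $(0,1,1,0)$ violates). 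The consistent choice is $\Lambda_1=0$: then the $Q$-block needs no shift, $[QX+D]^{\ts}[QX+D]=X^{\ts}Q^{\ts}QX+D^{\ts}D$, and the residual $-|P\Lambda_2|^2+|D|^2=-\Gamma_1(f,f)+\tfrac12\Gamma_1^z(f,f)$ is exactly the stated $\mathfrak{R}^{G}$. With the stated $\Lambda_1$, your residual formula would instead return $-\Gamma_1(f,f)-\tfrac32\Gamma_1^z(f,f)$, contradicting the lemma. (The paper's own write-up conceals the same problem behind the false identity $[A+\tfrac12\pa_zf]^2+[A-\tfrac12\pa_zf]^2=2[A-\pa_zf]^2+\tfrac12|\pa_zf|^2$; the left side equals $2A^2+\tfrac12|\pa_zf|^2$.) So your plan is structurally sound, but the verification you promise cannot succeed as stated: carried out honestly it proves the displayed identity and the formula for $\mathfrak{R}^{G}$ only after replacing $\Lambda_1$ by $0$.
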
{}

\begin{lem}\label{heisenberg tensor}
By routine computations, we obtain
\beaa
\mathfrak{R}_{ab}(\nabla f,\nabla f)&=&-(a^{\ts}\nabla)_1V\pa_z f(a^{\ts}\nabla)_2f+(a^{\ts}\nabla)_2V\pa_z f(a^{\ts}\nabla)_1f\\
&&+\Big[\frac{\pa^2 V}{\pa x\pa x}+\frac{y^2}{4}\frac{\pa^2 V}{\pa z\pa z}-y\frac{\pa^2 V}{\pa x\pa z}\Big] |(a^{\ts}\nabla)_1 f|^2\\
&&+\Big[\frac{\pa^2 V}{\pa y\pa y}+\frac{x^2}{4}\frac{\pa^2 V}{\pa z\pa z}+x\frac{\pa^2 V}{\pa y\pa z}\Big] |(a^{\ts}\nabla)_2 f|^2\\
&&+2\Big[\frac{\pa^2 V}{\pa x\pa y}+\frac{x}{2}\frac{\pa^2 V}{\pa x\pa z}-\frac{y}{2}\frac{\pa^2 V}{\pa y\pa z}-\frac{xy}{4}\frac{\pa^2 V}{\pa z\pa z}\Big] (a^{\ts}\nabla)_1 f(a^{\ts}\nabla)_2 f;\\
\mathfrak{R}_{zb}(\nabla f,\nabla f)&=&\Big(\frac{\pa^2 V}{\pa x\pa z }-\frac{y}{2}\frac{\pa^2 V}{\pa z\pa z}\Big)(a^{\ts}\nabla)_1 f(z^{\ts}\nabla)_1 f+\Big(\frac{\pa^2 V}{\pa y\pa z}+\frac{x}{2}\frac{\pa^2 V}{\pa z\pa z}\Big)(z^{\ts}\nabla)_1 f(a^{\ts}\nabla)_2 f;\\
\mathfrak{R}_{\rho^*}(\nabla f,\nabla f)&=&0.
\eeaa
\end{lem}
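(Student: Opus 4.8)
The plan is to substitute the Heisenberg data directly into the three tensor expressions $\mathfrak R_{ab}$, $\mathfrak R_{zb}$, $\mathfrak R_{\rho^*}$ of Theorem \ref{thm1} and simplify. The whole computation is driven by four structural facts specific to $\mathbb H^1$: (i) the only non-constant entries of $a^{\ts}$ are $a^{\ts}_{13}=-y/2$ and $a^{\ts}_{23}=x/2$, whose only nonzero first derivatives are $\partial_y a^{\ts}_{13}=-\tfrac12$ and $\partial_x a^{\ts}_{23}=\tfrac12$, and all of whose second derivatives vanish; (ii) $z^{\ts}=(0,0,1)$ is constant, so $\nabla(zz^{\ts})=0$ and every term carrying a derivative of $z^{\ts}$ drops; (iii) $aa^{\ts}$ is independent of $z$, i.e.\ $\partial_z(aa^{\ts})=0$; and (iv) $\Vol=1$, so $\rho^*=\tfrac1Z e^{-V}$ and $\nabla\log\rho^*=-\nabla V$. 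First I would record these, together with the vectors $Q,P,C,D,E,F,G$ computed above and the completing-square data $\Lambda_1,\Lambda_2$ of Lemma \ref{heisenberg hess}, so that the $|\mathfrak{Hess}^{G}_{a,z}f|^2+\mathfrak R^{G}$ part is already accounted for and only the quadratic-in-$\nabla f$ remainder must be organized.

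For $\mathfrak R_{ab}$ and $\mathfrak R_{zb}$, I would observe that the two sums in each carrying a second derivative of $a^{\ts}$ vanish by (i), and that the first two sums of $\mathfrak R_{zb}$ vanish by (ii); an index check using (iii) shows the remaining pure-$a^{\ts}$ first-derivative sums also vanish, since each forces a factor $\partial_z a^{\ts}_{\cdot\cdot}=0$. Hence in both tensors the surviving content comes from the drift term built from $b=-\tfrac12 aa^{\ts}\nabla V$. The core step is then to expand these drift sums: letting the derivative fall on $\nabla V$ produces the second-order contributions $\sum_{\hat i,k'}a^{\ts}_{i\hat i}a^{\ts}_{ik'}\partial_{\hat i}\partial_{k'}V$ (e.g.\ $\partial_{xx}V+\tfrac{y^2}4\partial_{zz}V-y\partial_{xz}V$ as the coefficient of $|(a^{\ts}\nabla)_1f|^2$) together with the $z$-coupled terms $\partial_{xz}V-\tfrac y2\partial_{zz}V$ and $\partial_{yz}V+\tfrac x2\partial_{zz}V$, while letting it fall on $aa^{\ts}$, combined with the undifferentiated $b$ paired against $\partial a^{\ts}$, produces the first-order terms $\pm(a^{\ts}\nabla)_iV\,\partial_zf\,(a^{\ts}\nabla)_jf$. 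Collecting everything in the frame $\mathsf U=((a^{\ts}\nabla)_1f,(a^{\ts}\nabla)_2f,(z^{\ts}\nabla)_1f)$ yields the stated formulas. I expect this drift expansion, and correctly sorting its output between $\mathfrak R_{ab}$ and $\mathfrak R_{zb}$ while tracking the change of frame, to be the main obstacle: the bookkeeping is routine but error-prone, so I would cross-check it against the matrix $\mathsf A$ of the Corollary, for instance that it reproduces $\mathsf A_{13}=\tfrac12(a^{\ts}\nabla)_2V$ and $\mathsf A_{23}=-\tfrac12(a^{\ts}\nabla)_1V$.

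Finally, for $\mathfrak R_{\rho^*}$ I would argue it vanishes identically, which is the cleanest part. By definition $\mathfrak R_{\rho^*}$ is assembled from $\div^{\rho^*}_z(\Gamma_{1,\nabla(aa^{\ts})}(f,f))$ and $\div^{\rho^*}_a(\Gamma_{1,\nabla(zz^{\ts})}(f,f))$. The $\div^{\rho^*}_a$ contribution is zero because $\nabla(zz^{\ts})=0$ by (ii), so $\Gamma_{1,\nabla(zz^{\ts})}\equiv0$. For the $\div^{\rho^*}_z$ contribution, note $zz^{\ts}$ is the orthogonal projection onto the $z$-axis, so $\div^{\rho^*}_z$ only sees the third component $\langle\nabla f,\partial_z(aa^{\ts})\nabla f\rangle$, which is zero by (iii). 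Equivalently, in the term-by-term form of $\mathfrak R_{\rho^*}$ in Theorem \ref{thm1}, every summand carries either a derivative of the constant $z^{\ts}$ or a factor $\partial_z(a^{\ts}_{\cdot\cdot})$, both of which vanish, and the weight $(z^{\ts}\nabla\log\rho^*)_1=-\partial_zV$ multiplies such a vanishing factor and drops as well. This gives $\mathfrak R_{\rho^*}(\nabla f,\nabla f)=0$ and completes the proof.
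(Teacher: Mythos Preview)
Your proposal is correct and follows essentially the same approach as the paper. The paper splits $\mathfrak R_{ab}$ into a non-drift part $\mathfrak R_a=\cI_1+\cI_2+\cI_3+\cI_4$ (all verified to be zero) and a drift part $\mathfrak R_b=\cJ_1+\cJ_2+\cJ_3+\cJ_4$, and similarly for $\mathfrak R_{zb}$; your structural observations (i)--(iv) are exactly what makes the $\cI$'s and the non-$\cJ_3$ terms collapse, and your ``derivative on $\nabla V$ vs.\ derivative on $aa^{\ts}$'' dichotomy is the paper's $\cJ_3$ vs.\ $\cJ_1,\cJ_2,\cJ_4$ split. One small caveat: $\mathfrak R_{\rho^*}$ is not literally $\div^{\rho^*}_z(\Gamma_{1,\nabla(aa^{\ts})})-\div^{\rho^*}_a(\Gamma_{1,\nabla(zz^{\ts})})$ but only its first-order-in-$\nabla f$ residue after completing squares (the second-order pieces feed into $G$); however since $G=0$ here this is harmless, and your term-by-term argument via Theorem~\ref{thm1} is the clean and correct one.
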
{}

\begin{proof}
[Proof of Lemma \ref{heisenberg hess}]
We first have 
\beaa
2C^{\ts}X&=&\sum_{\hat i,\hat k=1}^32C^{\ts}_{\hat i\hat k}X_{\hat i\hat k}\\
&=&2\Big[\frac{\pa^2 f}{\pa x\pa z}\frac{(a^{\ts}\nabla)_2f}{2}-\frac{\pa^2 f}{\pa y\pa z}\frac{(a^{\ts}\nabla)_1f}{2}+\frac{\pa^2 f}{\pa z\pa x}\frac{(a^{\ts}\nabla)_2f}{2}\Big]\\
&&-2\Big[\frac{\pa^2 f}{\pa z\pa y}\frac{(a^{\ts}\nabla)_1f}{2}+\frac{\pa^2 f}{\pa z\pa z}(\frac{y}{2}\pa_yf+\frac{x}{2}\pa_xf)\Big]\\
&=&2\frac{\pa^2 f}{\pa x\pa z}(a^{\ts}\nabla)_2f-2\frac{\pa^2 f}{\pa y\pa z}(a^{\ts}\nabla)_1f-2\frac{\pa^2 f}{\pa z\pa z}(\frac{y}{2}\pa_yf+\frac{x}{2}\pa_xf)\\
&=&2(a^{\ts}\nabla)_2f\left[\frac{\pa^2 f}{\pa x\pa z}-\frac{y}{2}\frac{\pa^2 f}{\pa z\pa z}\right]-2(a^{\ts}\nabla )_1f\left[\frac{\pa^2 f}{\pa y\pa z}+\frac{x}{2}\frac{\pa^2 f}{\pa z\pa z}\right].
\eeaa
By direct computations, we have
\beaa
&&[QX+D]^{\ts}[QX+D]+[PX+E]^{\ts}[PX+E] +2C^{\ts}X\\
&=& \left[\frac{\pa^2 f}{\pa x\pa x}-y\frac{\pa^2 f}{\pa x\pa z}+\frac{y^2}{4}\frac{\pa^2 f}{\pa z\pa z} \right]^2 +\left[\frac{\pa^2 f}{\pa x\pa y}+\frac{x}{2}\frac{\pa^2 f}{\pa x\pa z}-\frac{y}{2}\frac{\pa^2 f}{\pa y\pa z}-\frac{xy}{4}\frac{\pa^2f}{\pa z\pa z}+\frac{1}{2}\pa_z f \right]^2\\
&&+\left[\frac{\pa^2 f}{\pa x\pa y}-\frac{y}{2}\frac{\pa^2 f}{\pa y\pa z}+\frac{x}{2}\frac{\pa^2 f}{\pa x\pa z}-\frac{xy}{4}\frac{\pa^2f}{\pa z\pa z}-\frac{1}{2}\pa_z f\right]^2
+\left[\frac{\pa^2 f}{\pa y\pa y}+x\frac{\pa^2 f}{\pa y\pa z}+\frac{x^2}{4}\frac{\pa^2f}{\pa z\pa z} \right]^2\\
&&+ \left[\frac{\pa^2 f}{\pa x\pa z}-\frac{y}{2}\frac{\pa^2 f}{\pa z\pa z}\right]^2+\left[\frac{\pa^2 f}{\pa y\pa z}+\frac{x}{2}\frac{\pa^2 f}{\pa z\pa z}\right]^2\\
&&+2(a^{\ts}\nabla )_2f\left[\frac{\pa^2 f}{\pa x\pa z}-\frac{y}{2}\frac{\pa^2 f}{\pa z\pa z}\right]-2(a^{\ts}\nabla )_1f\left[\frac{\pa^2 f}{\pa y\pa z}+\frac{x}{2}\frac{\pa^2 f}{\pa z\pa z}\right].
\eeaa
Completing squares for the cross terms involving the type of $``\nabla f\nabla^2 f"$ and following the reformulation as below
\beaa
&&\left[\frac{\pa^2 f}{\pa x\pa y}+\frac{x}{2}\frac{\pa^2 f}{\pa x\pa z}-\frac{y}{2}\frac{\pa^2 f}{\pa y\pa z}-\frac{xy}{4}\frac{\pa^2f}{\pa z\pa z}+\frac{1}{2}\pa_z f \right]^2\\
&&+\left[\frac{\pa^2 f}{\pa x\pa y}-\frac{y}{2}\frac{\pa^2 f}{\pa y\pa z}+\frac{x}{2}\frac{\pa^2 f}{\pa x\pa z}-\frac{xy}{4}\frac{\pa^2f}{\pa z\pa z}-\frac{1}{2}\pa_z f\right]^2\\
&=&2\left[\frac{\pa^2 f}{\pa x\pa y}-\frac{y}{2}\frac{\pa^2 f}{\pa y\pa z}+\frac{x}{2}\frac{\pa^2 f}{\pa x\pa z}-\frac{xy}{4}\frac{\pa^2f}{\pa z\pa z} \right]^2+\frac{1}{2}|\pa_z f|^2,
\eeaa
we have 
\beaa
&&[QX+D]^{\ts}[QX+D]+[PX+E]^{\ts}[PX+E] +2C^{\ts}X\\
&=& \left[\frac{\pa^2 f}{\pa x\pa x}-y\frac{\pa^2 f}{\pa x\pa z}+\frac{y^2}{4}\frac{\pa^2 f}{\pa z\pa z} \right]^2+2 \left[\frac{\pa^2 f}{\pa x\pa y}-\frac{y}{2}\frac{\pa^2 f}{\pa y\pa z}+\frac{x}{2}\frac{\pa^2 f}{\pa x\pa z}-\frac{xy}{4}\frac{\pa^2f}{\pa z\pa z}\right]^2\\
&&+\left[\frac{\pa^2 f}{\pa y\pa y}+x\frac{\pa^2 f}{\pa y\pa z}+\frac{x^2}{4}\frac{\pa^2f}{\pa z\pa z}\right]^2+ \left[\frac{\pa^2 f}{\pa x\pa z}-\frac{y}{2}\frac{\pa^2 f}{\pa z\pa z}+(a^{\ts}\nabla)_2f\right]^2\\
&&+\left[\frac{\pa^2 f}{\pa y\pa z}+\frac{x}{2}\frac{\pa^2 f}{\pa z\pa z}-(a^{\ts}\nabla)_1f\right]^2-|(a^{\ts}\nabla )_2f|^2-|(a^{\ts}\nabla )_1f|^2+\frac{1}{2} |(z^{\ts}\nabla )_1f|^2.
\eeaa

The sum of square terms give $\|\mathfrak{Hess}_{a,z}\|^2_{\mathrm F}$, hence $\Lambda_1$ and $\Lambda_2$. The remainders generate $-\Lambda_1^{\ts}Q^{\ts}Q\Lambda_1-\Lambda_2^{\ts}P^{\ts}P\Lambda_2+D^{\ts}D+E^{\ts}E$, which equals $-\Gamma_1(f,f)+\frac{1}{2}\Gamma_1^z(f,f)$.
\qed 
\end{proof}

We are now left to compute the tensors. 

\begin{proof}[Proof of Lemma \ref{heisenberg tensor}]
By direct computation, we have 
\beaa
\mathfrak{R}_a(\nabla f,\nabla f)
&=&\sum_{i,k=1}^2\sum_{i',\hat i,\hat k=1}^{3} \la a^{\ts}_{ii'} (\frac{\partial a^{\ts}_{i \hat i}}{\partial x_{i'}} \frac{\partial a^{\ts}_{k\hat k}}{\partial x_{\hat i}}\frac{\partial f}{\partial x_{\hat k}}) ,(a^{\ts}\nabla)_kf\ra_{\hR^2}  \nonumber\\
&&+\sum_{i,k=2}^n\sum_{i',\hat i,\hat k=1}^{3} \la a^{\ts}_{ii'}a^{\ts}_{i \hat i} (\frac{\partial }{\partial x_{i'}} \frac{\partial a^{\ts}_{k\hat k}}{\partial x_{\hat i}})(\frac{\partial f}{\partial x_{\hat k}}) ,(a^{\ts}\nabla)_kf\ra_{\hR^2} \nonumber\\
&&-\sum_{i,k=1}^2\sum_{i',\hat i,\hat k=1}^{3} \la a^{\ts}_{k\hat k}\frac{\partial a^{\ts}_{ii'}}{\partial x_{\hat k}} \frac{\partial a^{\ts}_{i \hat i}}{\partial x_{i'}} \frac{\partial f}{\partial x_{\hat i}})  ,(a^{\ts}\nabla)_kf\ra_{\hR^2}  \nonumber\\
&&-\sum_{i,k=1}^2\sum_{i',\hat i,\hat k=1}^{3} \la a^{\ts}_{k\hat k} a^{\ts}_{ii'} (\frac{\partial }{\partial x_{\hat k}} \frac{\partial a^{\ts}_{i \hat i}}{\partial x_{i'}}) \frac{\partial f}{\partial x_{\hat i}}  ,(a^{\ts}\nabla)_kf\ra_{\hR^2},  \nonumber\\
&=& \cI_1+\cI_2+\cI_3+\cI_4.
\eeaa
For the four terms above, we have 
\beaa
\cI_1&=&\sum_{i=1}^2\sum_{i',\hat i,\hat k=1}^{3}  a^{\ts}_{ii'} (\frac{\partial a^{\ts}_{i \hat i}}{\partial x_{i'}} \frac{\partial a^{\ts}_{1\hat k}}{\partial x_{\hat i}}\frac{\partial f}{\partial x_{\hat k}})(a^{\ts}\nabla)_1f+\sum_{i=1}^2\sum_{i',\hat i,\hat k=1}^{3}  a^{\ts}_{ii'} (\frac{\partial a^{\ts}_{i \hat i}}{\partial x_{i'}} \frac{\partial a^{\ts}_{2\hat k}}{\partial x_{\hat i}}\frac{\partial f}{\partial x_{\hat k}}) (a^{\ts}\nabla)_2f=0\\
\cI_2&=&\sum_{i=2}^n\sum_{i',\hat i,\hat k=1}^{3} a^{\ts}_{ii'}a^{\ts}_{i \hat i} (\frac{\partial }{\partial x_{i'}} \frac{\partial a^{\ts}_{1\hat k}}{\partial x_{\hat i}})(\frac{\partial f}{\partial x_{\hat k}}) (a^{\ts}\nabla)_1f+\sum_{i=2}^n\sum_{i',\hat i,\hat k=1}^{3} a^{\ts}_{ii'}a^{\ts}_{i \hat i} (\frac{\partial }{\partial x_{i'}} \frac{\partial a^{\ts}_{2\hat k}}{\partial x_{\hat i}})(\frac{\partial f}{\partial x_{\hat k}})(a^{\ts}\nabla)_2f=0\\
\cI_3&=&-\sum_{i=1}^2\sum_{i',\hat i,\hat k=1}^{3}  a^{\ts}_{1\hat k}\frac{\partial a^{\ts}_{ii'}}{\partial x_{\hat k}} \frac{\partial a^{\ts}_{i \hat i}}{\partial x_{i'}} \frac{\partial f}{\partial x_{\hat i}})(a^{\ts}\nabla)_1f-\sum_{i=1}^2\sum_{i',\hat i,\hat k=1}^{3}  a^{\ts}_{2\hat k}\frac{\partial a^{\ts}_{ii'}}{\partial x_{\hat k}} \frac{\partial a^{\ts}_{i \hat i}}{\partial x_{i'}} \frac{\partial f}{\partial x_{\hat i}})(a^{\ts}\nabla)_2f=0\\
\cI_4&=&-\sum_{i=1}^2\sum_{i',\hat i,\hat k=1}^{3}  a^{\ts}_{1\hat k} a^{\ts}_{ii'} (\frac{\partial }{\partial x_{\hat k}} \frac{\partial a^{\ts}_{i \hat i}}{\partial x_{i'}}) \frac{\partial f}{\partial x_{\hat i}}(a^{\ts}\nabla)_1f-\sum_{i=1}^2\sum_{i',\hat i,\hat k=1}^{3}  a^{\ts}_{2\hat k} a^{\ts}_{ii'} (\frac{\partial }{\partial x_{\hat k}} \frac{\partial a^{\ts}_{i \hat i}}{\partial x_{i'}}) \frac{\partial f}{\partial x_{\hat i}}(a^{\ts}\nabla)_2f=0.
\eeaa
Similar computation applies to the tensor terms $\mathfrak{R}_{\rho^*}$ and $\mathfrak{R}_{zb}$. Since $z$ is a constant matrix, we get
\beaa
\mathfrak{R}_{zb}(\nabla f,\nabla f)&=&-2\sum_{i=1}^m \sum_{\hat i,\hat k=1}^{n+m}\la (z^T_{i\hat i}\frac{\pa b_{\hat k}}{\pa x_{\hat i}}\frac{\pa f}{\pa x_{\hat k}}-b_{\hat k}\frac{\pa z^T_{i\hat i}}{\pa x_{\hat k}} \frac{\pa f}{\pa x_{\hat i}} ),(z^T\nabla f)_i\ra_{\hR^m},\\
\mathfrak{R}_{\rho^*}&=&0.
\eeaa
We now compute the tensor terms involving the drift $b$. For the drift term in tensor $\mathfrak{R}_{ab}$, taking $b=-\frac{1}{2}aa^{\ts}\nabla V$, which means  $b=-\frac{1}{2}(a_{\hat k k}a^{\ts}_{kk'}\frac{\pa V}{\pa x_{k'}})_{\hat k=1,2,3}$ in local coordinates,
\beaa
\mathfrak{R}_b
&=&\sum_{i,k=1}^2\sum_{\hat i,\hat k,k'=1}^3\left[a^{\ts}_{i\hat i}\frac{\pa a^{\ts}_{k\hat k} }{\pa x_{\hat i}}a^{\ts}_{kk'}\frac{\pa V}{\pa x_{k'}} \frac{\pa f}{\pa x_{\hat k}}(a^{\ts}\nabla)_i f \right]\\
&&+\sum_{i,k=1}^2\sum_{\hat i,\hat k,k'=1}^3\left[a^{\ts}_{i\hat i}\frac{\pa a^{\ts}_{kk'}}{\pa x_{\hat i}}a^{\ts}_{k\hat k}\frac{\pa V}{\pa x_{k'}} \frac{\pa f}{\pa x_{\hat k}}(a^{\ts}\nabla)_i f\right]\\
&&+\sum_{i,k=1}^2\sum_{\hat i,\hat k,k'=1}^3\left[ a^{\ts}_{i\hat i}a^{\ts}_{k\hat k}a^{\ts}_{kk'}\frac{\pa^2 V}{\pa x_{\hat i}\pa x_{k'}}\frac{\pa f}{\pa x_{\hat k}}(a^{\ts}\nabla)_i f\right]\\
&&-\sum_{i,k=1}^2\sum_{\hat i,\hat k,k'=1}^3\left[a^{\ts}_{k\hat k}a^{\ts}_{kk'}\frac{\pa a^{\ts}_{i\hat i}}{\pa x_{\hat k}}\frac{\pa V}{\pa x_{k'}} \frac{\pa f}{\pa x_{\hat i}}(a^{\ts}\nabla)_if\right] \\
&=& \cJ_1+\cJ_2+\cJ_3+\cJ_4.
\eeaa
We now derive the explicit formulas for the above four terms. 
\beaa
\cJ_1
&=&\sum_{\hat i,\hat k,k'=1}^3\left[ a^{\ts}_{1\hat i}\frac{\pa a^{\ts}_{1\hat k} }{\pa x_{\hat i}}a^{\ts}_{1k'}\frac{\pa V}{\pa x_{k'}} \frac{\pa f}{\pa x_{\hat k}}(a^{\ts}\nabla)_1 f+a^{\ts}_{2\hat i}\frac{\pa a^{\ts}_{1\hat k} }{\pa x_{\hat i}}a^{\ts}_{1k'}\frac{\pa V}{\pa x_{k'}} \frac{\pa f}{\pa x_{\hat k}}(a^{\ts}\nabla)_2 f\right]\\
&&+\sum_{\hat i,\hat k,k'=1}^3\left[ a^{\ts}_{1\hat i}\frac{\pa a^{\ts}_{2\hat k} }{\pa x_{\hat i}}a^{\ts}_{2k'}\frac{\pa V}{\pa x_{k'}} \frac{\pa f}{\pa x_{\hat k}}(a^{\ts}\nabla)_1 f+a^{\ts}_{2\hat i}\frac{\pa a^{\ts}_{2\hat k} }{\pa x_{\hat i}}a^{\ts}_{2k'}\frac{\pa V}{\pa x_{k'}} \frac{\pa f}{\pa x_{\hat k}}(a^{\ts}\nabla)_2 f\right]\\
&=&-\frac{1}{2}(a^{\ts}\nabla)_1V\pa_zf(a^{\ts}\nabla)_2f +\frac{1}{2}(a^{\ts}\nabla)_2V\pa_z f(a^{\ts }\nabla)_1f;
\eeaa

\beaa
\cJ_2&=&\sum_{\hat i,\hat k,k'=1}^3\left[a^{\ts}_{1\hat i}\frac{\pa a^{\ts}_{1k'}}{\pa x_{\hat i}}a^{\ts}_{1\hat k}\frac{\pa V}{\pa x_{k'}} \frac{\pa f}{\pa x_{\hat k}}(a^{\ts}\nabla)_1 f+a^{\ts}_{2\hat i}\frac{\pa a^{\ts}_{1k'}}{\pa x_{\hat i}}a^{\ts}_{1\hat k}\frac{\pa V}{\pa x_{k'}} \frac{\pa f}{\pa x_{\hat k}}(a^{\ts}\nabla)_2 f\right]\\
&&\sum_{\hat i,\hat k,k'=1}^3\left[a^{\ts}_{1\hat i}\frac{\pa a^{\ts}_{2k'}}{\pa x_{\hat i}}a^{\ts}_{2\hat k}\frac{\pa V}{\pa x_{k'}} \frac{\pa f}{\pa x_{\hat k}}(a^{\ts}\nabla)_1 f+a^{\ts}_{2\hat i}\frac{\pa a^{\ts}_{2k'}}{\pa x_{\hat i}}a^{\ts}_{2\hat k}\frac{\pa V}{\pa x_{k'}} \frac{\pa f}{\pa x_{\hat k}}(a^{\ts}\nabla)_2 f\right]\\
&=&-\frac{1}{2}\frac{\pa V}{\pa z}(a^{\ts}\nabla)_1f(a^{\ts}\nabla)_2f+\frac{1}{2}\frac{\pa V}{\pa z}(a^{\ts}\nabla)_1f(a^{\ts}\nabla)_2f=0;
\eeaa
\beaa
\cJ_3&=&\sum_{\hat i,\hat k,k'=1}^3\left[ a^{\ts}_{1\hat i}a^{\ts}_{1\hat k}a^{\ts}_{1k'}\frac{\pa^2 V}{\pa x_{\hat i}\pa x_{k'}}\frac{\pa f}{\pa x_{\hat k}}(a^{\ts}\nabla)_1 f+a^{\ts}_{2\hat i}a^{\ts}_{1\hat k}a^{\ts}_{1k'}\frac{\pa^2 V}{\pa x_{\hat i}\pa x_{k'}}\frac{\pa f}{\pa x_{\hat k}}(a^{\ts}\nabla)_2 f\right]\\
&&\sum_{\hat i,\hat k,k'=1}^3\left[ a^{\ts}_{1\hat i}a^{\ts}_{2\hat k}a^{\ts}_{2k'}\frac{\pa^2 V}{\pa x_{\hat i}\pa x_{k'}}\frac{\pa f}{\pa x_{\hat k}}(a^{\ts}\nabla)_1 f+a^{\ts}_{2\hat i}a^{\ts}_{2\hat k}a^{\ts}_{2k'}\frac{\pa^2 V}{\pa x_{\hat i}\pa x_{k'}}\frac{\pa f}{\pa x_{\hat k}}(a^{\ts}\nabla)_2 f\right]\\
&=&\sum_{\hat i,k'=1}^3\left[ a^{\ts}_{1\hat i}a^{\ts}_{1k'}\frac{\pa^2 V}{\pa x_{\hat i}\pa x_{k'}} |(a^{\ts}\nabla)_1 f|^2+a^{\ts}_{2\hat i}a^{\ts}_{1k'}\frac{\pa^2 V}{\pa x_{\hat i}\pa x_{k'}}(a^{\ts}\nabla)_1 f(a^{\ts}\nabla)_2 f\right]\\
&&\sum_{\hat i,k'=1}^3\left[ a^{\ts}_{1\hat i}a^{\ts}_{2k'}\frac{\pa^2 V}{\pa x_{\hat i}\pa x_{k'}}(a^{\ts}\nabla)_2 f(a^{\ts}\nabla)_1 f+a^{\ts}_{2\hat i}a^{\ts}_{2k'}\frac{\pa^2 V}{\pa x_{\hat i}\pa x_{k'}}|(a^{\ts}\nabla)_2 f|^2\right]\\
&=&\Big[\frac{\pa^2 V}{\pa x\pa x}+\frac{y^2}{4}\frac{\pa^2 V}{\pa z\pa z}-y\frac{\pa^2 V}{\pa x\pa z}\Big] |(a^{\ts}\nabla)_1 f|^2+\Big[\frac{\pa^2 V}{\pa y\pa y}+\frac{x^2}{4}\frac{\pa^2 V}{\pa z\pa z}+x\frac{\pa^2 V}{\pa y\pa z}\Big] |(a^{\ts}\nabla)_2 f|^2\\
&&+2\Big[\frac{\pa^2 V}{\pa x\pa y}+\frac{x}{2}\frac{\pa^2 V}{\pa x\pa z}-\frac{y}{2}\frac{\pa^2 V}{\pa y\pa z}-\frac{xy}{4}\frac{\pa^2 V}{\pa z\pa z}\Big] (a^{\ts}\nabla)_1 f(a^{\ts}\nabla)_2 f;
\eeaa

\beaa
\cJ_4&=&-\sum_{\hat i,\hat k,k'=1}^3\left[a^{\ts}_{1\hat k}a^{\ts}_{1k'}\frac{\pa a^{\ts}_{1\hat i}}{\pa x_{\hat k}}\frac{\pa V}{\pa x_{k'}} \frac{\pa f}{\pa x_{\hat i}}(a^{\ts}\nabla)_1f+a^{\ts}_{1\hat k}a^{\ts}_{1k'}\frac{\pa a^{\ts}_{2\hat i}}{\pa x_{\hat k}}\frac{\pa V}{\pa x_{k'}} \frac{\pa f}{\pa x_{\hat i}}(a^{\ts}\nabla)_2f\right]\\
&&-\sum_{\hat i,\hat k,k'=1}^3\left[a^{\ts}_{2\hat k}a^{\ts}_{2k'}\frac{\pa a^{\ts}_{1\hat i}}{\pa x_{\hat k}}\frac{\pa V}{\pa x_{k'}} \frac{\pa f}{\pa x_{\hat i}}(a^{\ts}\nabla)_1f+a^{\ts}_{2\hat k}a^{\ts}_{2k'}\frac{\pa a^{\ts}_{2\hat i}}{\pa x_{\hat k}}\frac{\pa V}{\pa x_{k'}} \frac{\pa f}{\pa x_{\hat i}}(a^{\ts}\nabla)_2f\right]\\
&=&-\frac{1}{2}(a^{\ts}\nabla)_1V\pa_z f(a^{\ts}\nabla)_2f+\frac{1}{2}(a^{\ts}\nabla)_2V\pa_z f(a^{\ts}\nabla)_1f.
\eeaa
Summing up the above formulas, we get $\mathfrak{R}_{ab}$. We now compute the drift tensor term of $\mathfrak{R}_{zb}$.
By taking $b=-\frac{1}{2}aa^{\ts}\nabla V$, we have
\beaa
\mathfrak{R}_{zb}(\nabla f,\nabla f) 
&=&-\sum_{\hat i,\hat k=1}^{3}\left[ z^{\ts}_{1\hat i}\frac{\pa b_{\hat k}}{\pa x_{\hat i}}\frac{\pa f}{\pa x_{\hat k}}(z^{\ts}\nabla f)_1-b_{\hat k}\frac{\pa z^{\ts}_{i\hat i}}{\pa x_{\hat k}} \frac{\pa f}{\pa x_{\hat i}} (z^{\ts}\nabla f)_1\right]\\ 
&=&\sum_{k=1}^2\sum_{\hat i,\hat k,k'=1}^3\left[z^{\ts}_{1\hat i}\frac{\pa a^{\ts}_{k\hat k} }{\pa x_{\hat i}}a^{\ts}_{kk'}\frac{\pa V}{\pa x_{k'}} \frac{\pa f}{\pa x_{\hat k}}(z^{\ts}\nabla)_1 f \right]\\
&&+\sum_{k=1}^2\sum_{\hat i,\hat k,k'=1}^3\left[z^{\ts}_{1\hat i}\frac{\pa a^{\ts}_{kk'}}{\pa x_{\hat i}}a^{\ts}_{k\hat k}\frac{\pa V}{\pa x_{k'}} \frac{\pa f}{\pa x_{\hat k}}(z^{\ts}\nabla)_1 f\right]\\
&&+\sum_{k=1}^2\sum_{\hat i,\hat k,k'=1}^3\left[ z^{\ts}_{1\hat i}a^{\ts}_{k\hat k}a^{\ts}_{kk'}\frac{\pa^2 V}{\pa x_{\hat i}\pa x_{k'}}\frac{\pa f}{\pa x_{\hat k}}(z^{\ts}\nabla)_1 f\right]\\
&&-\sum_{k=1}^2\sum_{\hat i,\hat k,k'=1}^3\left[a^{\ts}_{k\hat k}a^{\ts}_{kk'}\frac{\pa z^{\ts}_{1\hat i}}{\pa x_{\hat k}}\frac{\pa V}{\pa x_{k'}} \frac{\pa f}{\pa x_{\hat i}}(z^{\ts}\nabla)_1f\right] \\
&=& \cJ_1^z+\cJ_2^z+\cJ_3^z+\cJ_4^z.
\eeaa
We further compute as blow by taking advantage of the constant matrix $z$,
\beaa
\cJ_1^z&=&\sum_{k=1}^2\sum_{\hat i,\hat k,k'=1}^3\left[z^{\ts}_{1\hat i}\frac{\pa a^{\ts}_{k\hat k} }{\pa x_{\hat i}}a^{\ts}_{kk'}\frac{\pa V}{\pa x_{k'}} \frac{\pa f}{\pa x_{\hat k}}(z^{\ts}\nabla)_1 f \right]=0;\\
\cJ_2^z&=&\sum_{k=1}^2\sum_{\hat i,\hat k,k'=1}^3\left[z^{\ts}_{1\hat i}\frac{\pa a^{\ts}_{kk'}}{\pa x_{\hat i}}a^{\ts}_{k\hat k}\frac{\pa V}{\pa x_{k'}} \frac{\pa f}{\pa x_{\hat k}}(z^{\ts}\nabla)_1 f\right]=0;\\
\cJ_4^z&=&-\sum_{k=1}^2\sum_{\hat i,\hat k,k'=1}^3\left[a^{\ts}_{k\hat k}a^{\ts}_{kk'}\frac{\pa z^{\ts}_{1\hat i}}{\pa x_{\hat k}}\frac{\pa V}{\pa x_{k'}} \frac{\pa f}{\pa x_{\hat i}}(z^{\ts}\nabla)_1f\right]=0\\
\cJ_3^z&=&\sum_{k=1}^2\sum_{\hat i,\hat k,k'=1}^3\left[ z^{\ts}_{1 \hat i}a^{\ts}_{k\hat k}a^{\ts}_{kk'}\frac{\pa^2 V}{\pa x_{\hat i}\pa x_{k'}}\frac{\pa f}{\pa x_{\hat k}}(z^{\ts}\nabla)_1 f\right]\\
&=& \Big(\frac{\pa^2 V}{\pa x\pa z }-\frac{y}{2}\frac{\pa^2 V}{\pa z\pa z}\Big)(a^{\ts}\nabla)_1 f (z^{\ts}\nabla)_1 f+\Big(\frac{\pa^2 V}{\pa y\pa z}+\frac{x}{2}\frac{\pa^2 V}{\pa z\pa z}\Big)(z^{\ts}\nabla)_1 f(a^{\ts}\nabla)_2 f. 
\eeaa
The proof is thus completed.\qed 
\end{proof}{}

\subsection{Proof Of Proposition \ref{prop se2}}
By routine computations, we derive the following lemma. 
\begin{lem}\label{lemma: vector and mat for se2}
For displacement group $\textbf{SE}(2)$, we have 
\beaa
Q&=&\left(
\begin{array}{ccccccccc}
 1 & 0 & 0 & 0 & 0 & 0 & 0 & 0 & 0 \\
 0 & e^{\beta \theta} & 1 & 0 & 0 & 0 & 0 & 0 & 0 \\
 0 & 0 & 0 & e^{\beta \theta} & 0 & 0 & 1 & 0 & 0 \\
 0 & 0 & 0 & 0 & e^{2 \beta \theta} & e^{\beta \theta} & 0 & e^{\beta \theta} & 1 \\
\end{array}
\right);\\
P&=&\left(
\begin{array}{ccccccccc}
 0 & 0 & 0 & 0 & 0 & 0 & -g(\theta,x,y) & 0 & 0 \\
 0 & 0 & 0 & 0 & 0 & 0 & 0 & -g(\theta,x,y) e^{\beta \theta} & -g(\theta,x,y) \\
\end{array}
\right);\\
D^{\ts}&=&(0,\beta e^{\beta\theta}\pa_xf ,0,0),\quad E^{\ts}=(- \pa_yf \pa_{\theta}g , -\pa_yf \pa_yg-e^{\beta \theta} \pa_yf \pa_xg);\\
C^{\ts}&=&(0,\beta e^{\beta \theta}\pa_y f+\beta e^{2\beta\theta} \pa_xf,0,0,-\beta e^{2\beta \theta}\pa_{\theta} f,-\beta e^{\beta \theta}\pa_{\theta} f,0,0,0).
\eeaa
$F=\left(
\begin{array}{c}
 0 \\
 0 \\
 g\pa_{\theta}g \pa_yf \\
 0 \\
 0 \\
 e^{\beta \theta} g \pa_yf \pa_yg+e^{2 \beta \theta} g \pa_yf \pa_x g \\
 0 \\
 0 \\
 g \pa_yf \pa_yg+e^{\beta \theta} g \pa_y f \pa_x g \\
\end{array}
\right),\quad G=\left(
\begin{array}{c}
 0 \\
 0 \\
 -2 g \pa_yf \pa_{\theta}g \\
 0 \\
 0 \\
 -2 e^{\beta \theta} g \pa_yf \pa_yg-2 e^{2 \beta \theta} g \pa_yf \pa_xg\\
 0 \\
 0 \\
 -2 g \pa_yf \pa_yg-2 e^{\beta \theta} g \pa_yf \pa_xg \\
\end{array}
\right)$.
\end{lem}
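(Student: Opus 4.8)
The statement is computational in nature: it evaluates the matrices $Q,P$ and the vectors $C,D,E,F,G$ of Notation~\ref{notation} at the explicit $\textbf{SE}(2)$ data $a^{\ts}=\begin{pmatrix}1&0&0\\0&e^{\beta\theta}&1\end{pmatrix}$, $z^{\ts}=(0,0,-g(\theta,x,y))$. So the plan is to substitute these into each defining formula and simplify, working in the local coordinates $(x_1,x_2,x_3)=(\theta,x,y)$, so that $(a^{\ts}\nabla)_1 f=\partial_\theta f$, $(a^{\ts}\nabla)_2 f=e^{\beta\theta}\partial_x f+\partial_y f$, and $(z^{\ts}\nabla)_1 f=-g\,\partial_y f$.

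The first step I would isolate is the two structural facts that annihilate almost every term in the sums. The matrix $a^{\ts}$ has a \emph{single} non-constant entry $a^{\ts}_{22}=e^{\beta\theta}$, whose only nonzero first partial is $\partial_\theta a^{\ts}_{22}=\beta e^{\beta\theta}$ and whose second partials all vanish; in particular $a^{\ts}$ is independent of $x_3=y$, so $\partial_{x_3}a^{\ts}_{i\hat i}\equiv 0$. Likewise $z^{\ts}$ has the single non-constant entry $z^{\ts}_{13}=-g$, with $\partial_{x_{i'}}z^{\ts}_{13}=-\partial_{x_{i'}}g$. With these in hand: $Q_{ik\hat i\hat k}=a^{\ts}_{i\hat i}a^{\ts}_{k\hat k}$ and $P_{ik\hat i\hat k}=z^{\ts}_{i\hat i}a^{\ts}_{k\hat k}$ are just Kronecker-type products of the rows of $a^{\ts}$ (resp.\ of the row of $z^{\ts}$ with the rows of $a^{\ts}$), and writing them out in the row order $\sum_i\sum_k$ and column order $\sum_{\hat i}\sum_{\hat k}$ produces the claimed $4\times 9$ and $2\times 9$ matrices. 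For the vectors, $D_{ik}=\sum a^{\ts}_{i\hat i}(\partial_{x_{\hat i}}a^{\ts}_{k\hat k})\partial_{x_{\hat k}}f$ is forced to $(\hat i,k,\hat k)=(1,2,2)$ and $i=1$, leaving only $D_{12}=\beta e^{\beta\theta}\partial_x f$; $E_{ik}=\sum a^{\ts}_{i\hat i}(\partial_{x_{\hat i}}z^{\ts}_{k\hat k})\partial_{x_{\hat k}}f$ forces $\hat k=3$ and the $\hat i$-sum reassembles $(a^{\ts}\nabla)_i g$, giving the stated $E$; for $C$ the first group forces $(i',k,\hat k)=(1,2,2)$ then $i=1,\hat i=1$, the second group forces $(i',i,\hat i)=(1,2,2)$ then $k=1$, and collecting the three surviving $(\hat i,\hat k)$ slots $(1,2),(2,2),(2,3)$ yields $C^{\ts}$. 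Finally, in $F$ and $G$ every term carrying a factor $\partial_{x_{i'}}a^{\ts}_{i\hat i}$ sits against a $z^{\ts}_{\cdot 3}$ factor and hence differentiates $a^{\ts}$ in $x_3=y$, so it vanishes; the surviving terms all carry $\partial z^{\ts}_{13}=-\partial g$, and after the $i,i'$-sums reassemble $\sum_i a^{\ts}_{i\hat i}(a^{\ts}\nabla)_i g$ one reads off the claimed $F$ and the claimed $G=-2F$ componentwise (the factor $-2$ being automatic here, since the $\partial a$-type contributions to both vectors drop out, leaving $G$ equal to $-2$ times the $\partial z$-type terms that constitute $F$).

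I expect no genuine obstacle beyond bookkeeping. The one place care is required is the flattening conventions: the index bundle $(i,k,\hat i,\hat k)$ labels a $4$- or $2$-dimensional row index together with a $9$-dimensional column (or entry) index, and the displayed matrices and vectors fix a specific ordering of these, so each surviving contribution must be placed in exactly the right slot. Once the "single non-constant entry" observations for $a^{\ts}$ and $z^{\ts}$ are recorded, the remaining verification uses no analytic input and is a finite, if somewhat tedious, check — essentially the same bookkeeping that was carried out for the Heisenberg group in the preceding subsection, now with $g$ carried along symbolically.
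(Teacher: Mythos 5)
Your proposal is correct and is exactly the computation the paper intends: the paper offers no proof beyond ``by routine computations,'' and your substitution of the explicit $a^{\ts}$ and $z^{\ts}$ into Notation~\ref{notation}, together with the two key observations (the only non-constant entry of $a^{\ts}$ is $a^{\ts}_{22}=e^{\beta\theta}$ with sole nonzero derivative $\partial_\theta a^{\ts}_{22}=\beta e^{\beta\theta}$, and the only non-constant entry of $z^{\ts}$ is $z^{\ts}_{13}=-g$), reproduces every stated entry of $Q,P,C,D,E,F,G$, including the identity $G=-2F$. I spot-checked the surviving index slots for $C$, $F$, and $G$ and they all land in the claimed positions.
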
{}

The proof of Proposition \ref{prop se2} follows from the two lemmas below.
\begin{lem}\label{SE2 hess} On the displacement group, we have
\beaa
[QX+D]^{\ts}[QX+D]+[PX+E]^{\ts}[PX+E]+2[C^{\ts}+F^{\ts}+G^{\ts}]X
=|\mathfrak{Hess}_{a,z}^{G}f|^2+\mathfrak{R}^{G}(\nabla f,\nabla f).
\eeaa
In particular, we have 
\beaa
|\mathfrak{Hess}_{a,z}^{G}f|^2&=&[X+\Lambda_1]^{\ts}Q^{\ts}Q[X+\Lambda_1]+[X+\Lambda_2]^{\ts}P^{\ts}P[X+\Lambda_2];\\
\Lambda_1^{\ts}&=& (0,\beta\pa_xf,\frac{\beta\pa_yf}{2},\beta\pa_xf,0,0,\frac{\beta\pa_yf}{2},0,-\beta\pa_{\theta}f); \\
\Lambda_2^{\ts}&=& (0,0,0,0,0,0,\lambda_6,0,\lambda_9);\\
\lambda_6&=&\frac{\pa_{\theta}g\pa_yf}{g}-\frac{\beta(a^{\ts}\nabla)_2f}{g^2}-\frac{\pa_{\theta}g\pa_yf}{g};\\ \lambda_9&=&\frac{(a^{\ts}\nabla)_2g\pa_yf}{g}+\frac{\beta \pa_{\theta}f}{g^2}-\frac{(a^{\ts}\nabla)_2g\pa_yf}{g};\\
\mathfrak{R}^{G}(\nabla f,\nabla f)&=&\Gamma_1(\log g,\log g)\Gamma_1^z(f,f)-\beta^2(1+\frac{1}{g^2})\Gamma_1(f,f)+\frac{\beta^2}{2g^2}\Gamma_1^z(f,f)
\eeaa
\end{lem}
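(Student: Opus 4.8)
The plan is to prove Lemma~\ref{SE2 hess} by an explicit completing-the-square computation, in direct parallel with the Heisenberg case (Lemma~\ref{heisenberg hess}); the new feature is that the non-constant function $g$ makes the vectors $F$ and $G$ nonzero, so these must be carried along. First I would substitute the explicit $Q,P$ and $C,D,E,F,G$ from Lemma~\ref{lemma: vector and mat for se2} and expand
\[
[QX+D]^{\ts}[QX+D]+[PX+E]^{\ts}[PX+E]+2(C+F+G)^{\ts}X
= X^{\ts}Q^{\ts}QX+X^{\ts}P^{\ts}PX+2\big(Q^{\ts}D+P^{\ts}E+C+F+G\big)^{\ts}X+D^{\ts}D+E^{\ts}E.
\]
Reading off the rows of $Q$, the vector $QX$ has components $\pa^2_{\theta\theta}f$, $e^{\beta\theta}\pa^2_{\theta x}f+\pa^2_{\theta y}f$ (appearing twice, since rows $(1,2)$ and $(2,1)$ of $Q$ agree on symmetric $X$), and $e^{2\beta\theta}\pa^2_{xx}f+2e^{\beta\theta}\pa^2_{xy}f+\pa^2_{yy}f$; similarly $PX$ has components $-g\,\pa^2_{\theta y}f$ and $-g(e^{\beta\theta}\pa^2_{xy}f+\pa^2_{yy}f)$. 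Thus $X^{\ts}Q^{\ts}QX=|QX|^2$ and $X^{\ts}P^{\ts}PX=|PX|^2$ are explicit sums of squares of second-derivative combinations, and the only remaining work in the second-order part is to absorb the linear term $2(Q^{\ts}D+P^{\ts}E+C+F+G)^{\ts}X$.

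Next I would verify Assumption~\ref{assumption:main result} here: one must exhibit $\Lambda_1,\Lambda_2$ with
\[
(Q^{\ts}Q\Lambda_1+P^{\ts}P\Lambda_2)^{\ts}X=(Q^{\ts}D+P^{\ts}E+C+F+G)^{\ts}X
\]
for every symmetric Hessian vector $X$, the candidates being the $\Lambda_1,\Lambda_2$ (equivalently $\lambda_6,\lambda_9$) displayed in the statement. Checking this reduces to matching, for each independent symmetric monomial $\pa^2_{\hat i\hat k}f$, its coefficient on the two sides; this is exactly where the nonzero $F$ and $G$ cancel against the $g$-dependent parts of $C$, $Q^{\ts}D$ and $P^{\ts}E$. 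Once $\Lambda_1,\Lambda_2$ are fixed, using symmetry of $Q^{\ts}Q$ one has $X^{\ts}Q^{\ts}QX+2(Q^{\ts}Q\Lambda_1)^{\ts}X=[X+\Lambda_1]^{\ts}Q^{\ts}Q[X+\Lambda_1]-\Lambda_1^{\ts}Q^{\ts}Q\Lambda_1$, and likewise for $P$, so the second-order part collapses to $|\mathfrak{Hess}_{a,z}^{G}f|^2=[X+\Lambda_1]^{\ts}Q^{\ts}Q[X+\Lambda_1]+[X+\Lambda_2]^{\ts}P^{\ts}P[X+\Lambda_2]$ exactly as asserted (this is the intrinsic version of the ``merge the two equal squares and absorb the first-order pieces'' step carried out by hand in the Heisenberg proof).

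The leftover is purely quadratic in $\nabla f$: $\mathfrak{R}^{G}(\nabla f,\nabla f)=D^{\ts}D+E^{\ts}E-\Lambda_1^{\ts}Q^{\ts}Q\Lambda_1-\Lambda_2^{\ts}P^{\ts}P\Lambda_2$, and the final step is to identify this with $\Gamma_1(\log g,\log g)\Gamma_1^z(f,f)-\beta^2(1+\tfrac{1}{g^2})\Gamma_1(f,f)+\tfrac{\beta^2}{2g^2}\Gamma_1^z(f,f)$. Rewriting everything through $(a^{\ts}\nabla)_1f=\pa_\theta f$, $(a^{\ts}\nabla)_2f=e^{\beta\theta}\pa_x f+\pa_y f$, $(z^{\ts}\nabla)_1f=-g\,\pa_y f$ and the logarithmic gradient $(a^{\ts}\nabla)_i\log g$, one gets $D^{\ts}D=\beta^2e^{2\beta\theta}(\pa_x f)^2$ and, since $E^{\ts}=(-\pa_y f\,(a^{\ts}\nabla)_1 g,\,-\pa_y f\,(a^{\ts}\nabla)_2 g)$, that $E^{\ts}E=\Gamma_1(\log g,\log g)\,\Gamma_1^z(f,f)$; the residual identity $D^{\ts}D-\Lambda_1^{\ts}Q^{\ts}Q\Lambda_1-\Lambda_2^{\ts}P^{\ts}P\Lambda_2=-\beta^2(1+\tfrac{1}{g^2})\Gamma_1(f,f)+\tfrac{\beta^2}{2g^2}\Gamma_1^z(f,f)$ is then a finite algebraic check.

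The step I expect to be the main obstacle is the \emph{choice} of $\Lambda_1,\Lambda_2$, not any single computation. Because $Q$ is $4\times9$ and $P$ is $2\times9$, the forms $Q^{\ts}Q$ and $P^{\ts}P$ are badly rank-deficient (and the coincidence of rows $(1,2)$ and $(2,1)$ of $Q$ on symmetric Hessians adds still more freedom), so Assumption~\ref{assumption:main result} pins down $\Lambda_1,\Lambda_2$ only modulo these kernels. A careless representative still satisfies the Assumption but leaves $\mathfrak{R}^{G}$ in an unrecognizable form; one has to select the specific $\Lambda_1,\Lambda_2$ of the statement so that after subtracting $\Lambda_1^{\ts}Q^{\ts}Q\Lambda_1+\Lambda_2^{\ts}P^{\ts}P\Lambda_2$ the answer is manifestly the clean $\Gamma_1,\Gamma_1^z$ combination. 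Everything else is bounded bookkeeping of nine-component vectors with $e^{\beta\theta}$-weighted entries, together with the square-completion already rehearsed for $\mathbb{H}^1$.
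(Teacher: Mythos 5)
Your proposal is correct and follows essentially the same route as the paper's appendix proof: both are the completing-the-square computation of Theorem \ref{thm1} specialized to $\textbf{SE}(2)$, with the cancellation $(F+G+P^{\ts}E)^{\ts}X=0$ on symmetric Hessians and the identification $\mathfrak{R}^G=D^{\ts}D+E^{\ts}E-\Lambda_1^{\ts}Q^{\ts}Q\Lambda_1-\Lambda_2^{\ts}P^{\ts}P\Lambda_2$ reproducing exactly the paper's hand-expanded squares (your abstract form $[X+\Lambda_1]^{\ts}Q^{\ts}Q[X+\Lambda_1]$ automatically yields the paper's ad hoc merging of the two equal $(\theta,x)$/$(x,\theta)$ squares). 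The coefficient checks you defer do go through with the stated $\Lambda_1,\Lambda_2$.
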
{}
\begin{lem}\label{SE2 tensor}
By routine computations, we obtain
\beaa
\mathfrak{R}_{ab}(\nabla f,\nabla f)&=&\beta^2e^{\beta \theta}\frac{\pa f}{\pa x}(a^{\ts}\nabla )_2f +\beta e^{\beta\theta} (a^{\ts}\nabla)_2V\frac{\pa f}{\pa x}(a^{\ts}\nabla )_1 f  +\beta e^{\beta \theta}\frac{\pa V}{\pa x}(a^{\ts}\nabla)_2f(a^{\ts}\nabla)_1f \\
&&+\frac{\pa^2 V}{\pa \theta\pa \theta} |(a^{\ts}\nabla)_1 f|^2+2(e^{\beta\theta}\frac{\pa^2 V}{\pa \theta\pa x}+\frac{\pa^2 V}{\pa \theta \pa y} )(a^{\ts}\nabla)_1 f(a^{\ts}\nabla)_2 f\\
&&+\sum_{\hat i,k'=1}^3a^{\ts}_{2\hat i}a^{\ts}_{2k'}\frac{\pa^2 V}{\pa x_{\hat i}\pa x_{k'}}|(a^{\ts}\nabla)_2 f|^2-\beta e^{\beta\theta}(a^{\ts}\nabla)_1V \frac{\pa f}{\pa x}(a^{\ts}\nabla)_2f;\\
\mathfrak{R}_{zb}(\nabla f,\nabla f)&=&\sum_{i=1}^2\sum_{i',\hat i=1}^{3}  a^{\ts}_{ii'}a^{\ts}_{i \hat i} \frac{\pa^2 z^{\ts}_{1\hat k}}{\partial x_{i'}\partial x_{\hat i}}\pa_yf (z^{\ts}\nabla)_1f -\sum_{k=1}^2(a^{\ts}\nabla)_kz^{\ts}_{13}(a^{\ts}\nabla)_kV\pa_yf(a^{\ts}\nabla)_1f
\\ &&-g\frac{\pa^2 V}{\pa \theta\pa y}|(a^{\ts}\nabla)_1 f|^2-g(e^{\beta\theta}\frac{\pa^2 V}{\pa x\pa y}+\frac{\pa^2 V}{\pa y\pa y} )(a^{\ts}\nabla)_2 f(a^{\ts}\nabla)_1 f;\\
\mathfrak{R}_{\rho^*}(\nabla f,\nabla f)&=& -2\sum_{l=1}^2\sum_{l',\hat l=1}^3a^{\ts}_{ll'}a^{\ts}_{l\hat l}\frac{\pa^2 z^{\ts}_{13}}{\pa x_{l'}\pa x_{\hat l}}\pa_yf (z^{\ts}\nabla)_1f-2\sum_{l=1}^2\sum_{l',\hat l=1}^3a^{\ts}_{ll'}a^{\ts}_{l\hat l}\frac{\pa z^{\ts}_{13}}{\pa x_{\hat l}}\frac{\pa z^{\ts}_{13}}{\pa x_{l'}} |\pa_yf|^2\\
&&-2 \sum_{l=1}^2\sum_{\hat l=1}^3(a^{\ts}\nabla)_l\log \rho^* a^{\ts}_{l\hat l}\frac{\pa z^{\ts}_{13}}{\pa x_{\hat l}}\pa_yf (z^{\ts}\nabla)_1f.
\eeaa
\end{lem}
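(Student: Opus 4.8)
The plan is to specialize the three tensor formulas of Theorem~\ref{thm1} to the $\textbf{SE}(2)$ data
\[
a^{\ts}=\begin{pmatrix}1&0&0\\0&e^{\beta\theta}&1\end{pmatrix},\qquad z^{\ts}=\begin{pmatrix}0&0&-g(\theta,x,y)\end{pmatrix},
\]
proceeding exactly as in the Heisenberg computation of Lemma~\ref{heisenberg tensor}, together with the vectors $Q,P,D,E,F,C,G$ recorded in Lemma~\ref{lemma: vector and mat for se2}. The only non-vanishing first derivative of $a^{\ts}$ is $\pa_\theta a^{\ts}_{22}=\beta e^{\beta\theta}$, with $\pa^2_{\theta\theta}a^{\ts}_{22}=\beta^2 e^{\beta\theta}$, whereas every derivative of $z^{\ts}$ is a derivative of $-g$. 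Throughout I substitute $b=-\tfrac12 aa^{\ts}\nabla V$, so that each factor $\pa b_{\hat k}/\pa x_{\hat i}$ contributes one $\nabla^2V$ term and two $(\nabla a^{\ts})(\nabla V)$ terms, while each factor $b_{\hat k}$ times a derivative contributes a single first-order $\nabla V$ term.

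First I would treat $\mathfrak{R}_{ab}=\mathfrak{R}_a+\mathfrak{R}_b$. Because $a^{\ts}$ depends only on $\theta$, the four sums defining $\mathfrak{R}_a$ collapse to the single surviving contribution $\beta^2 e^{\beta\theta}\,\pa_yf\,(a^{\ts}\nabla)_2f$ from the block $i=k=2$, $i'=\hat i=1$; all mixed terms cancel in pairs, exactly as in the Heisenberg case. For $\mathfrak{R}_b$ I would mimic the split into $\cJ_1,\dots,\cJ_4$ of Lemma~\ref{heisenberg tensor}: $\cJ_3$ produces $\sum_{\hat i,k'=1}^3 a^{\ts}_{2\hat i}a^{\ts}_{2k'}\pa^2_{x_{\hat i}x_{k'}}V\,|(a^{\ts}\nabla)_2f|^2$ together with the $\pa^2_{\theta\theta}V$ term and the mixed $e^{\beta\theta}\pa^2_{\theta x}V+\pa^2_{\theta y}V$ Hessian term, while $\cJ_1,\cJ_2,\cJ_4$ assemble into the first-order $\beta e^{\beta\theta}(a^{\ts}\nabla)_\bullet V$ contributions. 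Expanding $(a^{\ts}\nabla)_2=e^{\beta\theta}\pa_x+\pa_y$ then gives the displayed ``in particular'' identity for the coefficient of $|(a^{\ts}\nabla)_2f|^2$.

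Next I would compute $\mathfrak{R}_{zb}$. Since $z$ is no longer constant, its pure-$z$ part retains the two second-derivative-of-$z$ sums; with $z^{\ts}=(0,0,-g)$ these reduce to $\sum_{i,i',\hat i}a^{\ts}_{ii'}a^{\ts}_{i\hat i}\,\pa^2_{x_{i'}x_{\hat i}}z^{\ts}_{13}\,\pa_yf\,(z^{\ts}\nabla)_1f$, which after $z^{\ts}_{13}=-g$ and $(z^{\ts}\nabla)_1f=-g\,\pa_zf$ becomes $[\pa^2_{\theta\theta}g+e^{2\beta\theta}\pa^2_{xx}g+\pa^2_{yy}g+2e^{\beta\theta}\pa^2_{xy}g]\,|(z^{\ts}\nabla)_1f|^2/g$, the second ``in particular'' identity. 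The drift part of $\mathfrak{R}_{zb}$, with $b=-\tfrac12aa^{\ts}\nabla V$, yields the $-g\,\pa^2_{\theta y}V$ and $-g(e^{\beta\theta}\pa^2_{xy}V+\pa^2_{yy}V)$ Hessian terms plus the $-(a^{\ts}\nabla)_kz^{\ts}_{13}\,(a^{\ts}\nabla)_kV$ first-order terms, every remaining contribution dropping because $z^{\ts}_{11}=z^{\ts}_{12}=0$.

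Finally, for $\mathfrak{R}_{\rho^*}$ I would expand the weighted divergences in \eqref{Gz2} using $\nabla\cdot(\rho^* M F)/\rho^*=\nabla\cdot(MF)+(\nabla\log\rho^*)\cdot(MF)$, with $M=zz^{\ts}$ in the first divergence and $M=aa^{\ts}$ in the second. Since the only nonzero entry of $z^{\ts}$ is $z^{\ts}_{13}=-g$, most of the six lines of the general $\mathfrak{R}_{\rho^*}$ in Theorem~\ref{thm1} vanish, leaving the three displayed sums: the second-order-in-$z$ sum, the $(\pa z^{\ts}_{13})(\pa z^{\ts}_{13})$ sum produced when the product rule hits both copies of $z$ inside $zz^{\ts}$, and the $(a^{\ts}\nabla)_l\log\rho^*$ sum carrying the $\nabla\log\rho^*$ factor; these later combine with the analogous sum in $\mathfrak{R}_{zb}$ and simplify to the closed form stated in Proposition~\ref{prop se2}. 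I expect the bookkeeping of $\mathfrak{R}_{\rho^*}$ — matching the product-rule terms, keeping straight the opposite signs of $\div^{\rho^*}_z$ and $\div^{\rho^*}_a$, and verifying that the full six-line general expression reduces to exactly these three surviving sums — to be the main obstacle; the $\mathfrak{R}_{ab}$ and $\mathfrak{R}_{zb}$ parts are routine index chases of precisely the kind already carried out for the Heisenberg group.
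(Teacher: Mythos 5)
Your plan follows the paper's own proof essentially verbatim: the paper likewise specializes the general tensors of Theorem \ref{thm1}, splitting $\mathfrak{R}_a$ into $\cI_1,\dots,\cI_4$ (of which only the second-derivative term survives), $\mathfrak{R}_b$ and the drift part of $\mathfrak{R}_{zb}$ into $\cJ$-type sums with $b=-\tfrac12 aa^{\ts}\nabla V$, and $\mathfrak{R}_{\rho^*}$ into ten terms $\cK_1,\dots,\cK_{10}$ of which exactly the three sums you predict ($\cK_8$, $\cK_9$, $\cK_{10}$) survive. The only quibble is a harmless index slip: since $a^{\ts}_{21}=0$, the surviving contribution to $\mathfrak{R}_a$ comes from $i=1$, $k=2$, $i'=\hat i=1$ rather than $i=k=2$, which does not affect the result.
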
{}
\begin{proof}[Proof of Lemma \ref{SE2 hess}]
According to Lemma \ref{lemma: vector and mat for se2} and observe the fact that $G=-2F$ and $(a^{\ts}\nabla)_2f=e^{\beta\theta}\pa_xf+\pa_yf$, we first have 
\beaa
2C^{\ts}X&=&2[\beta e^{\beta \theta}\pa_y f+\beta e^{2\beta\theta} \pa_xf]\frac{\pa^2 f}{\pa \theta \pa x}+2[-\beta e^{2\beta \theta}\pa_{\theta} f]\frac{\pa^2 f}{\pa x \pa x}+2[-\beta e^{\beta \theta}\pa_{\theta} f]\frac{\pa^2 f}{\pa x\pa y};\\
2[F^{\ts}+G^{\ts}]X&=&-2\Big(g\pa_{\theta}g\pa_yf\frac{\pa^2 f}{\pa \theta \pa y}+e^{\beta\theta}g (a^{\ts}\nabla)_2g\pa_yf\frac{\pa^2 f}{\pa x\pa y}+g(a^{\ts}\nabla)_2g\pa_yf\frac{\pa^2 f}{\pa y\pa y} \Big).
\eeaa
By direct computations, we end up with 
\beaa
&&[QX+D]^{\ts}[QX+D]+ [PX+E]^{\ts}[PX+E]+2C^{\ts}X+2F^{\ts}X+2G^{\ts}X\\
&=&
\left[ \frac{\pa^2 f}{\pa \theta\pa \theta}\right]^2+\left[ e^{2\beta\theta} \frac{\pa^2 f}{\pa x\pa x}+2e^{\beta\theta}\frac{\pa^2 f}{\pa x\pa y}+\frac{\pa^2 f}{\pa y\pa y}\right]^2+\left[e^{\beta \theta}\frac{\pa^2 f}{\pa \theta \pa x}+\frac{\pa^2 f}{\pa \theta\pa y}+\beta e^{\beta\theta}\frac{\pa f}{\pa x} \right]^2\\
&&+\left[e^{\beta \theta}\frac{\pa^2 f}{\pa \theta \pa x}+\frac{\pa^2 f}{\pa \theta\pa y} \right]^2+\left[-g\frac{\pa^2 f}{\pa \theta\pa y} -\pa_yf\pa_{\theta}g\right]^2+ \left[-g e ^{\beta\theta}\frac{\pa^2 f}{\pa x\pa y} -g\frac{\pa^2f}{\pa y\pa y}-(a^{\ts}\nabla)_2g \pa_yf\right]^2\\
&&+2[\beta e^{\beta \theta}\pa_y f+\beta e^{2\beta\theta} \pa_xf]\frac{\pa^2 f}{\pa \theta \pa x}+2[-\beta e^{2\beta \theta}\pa_{\theta} f]\frac{\pa^2 f}{\pa x \pa x}+2[-\beta e^{\beta \theta}\pa_{\theta} f]\frac{\pa^2 f}{\pa x\pa y}\\
&&-2\Big(g\pa_{\theta}g\pa_yf\frac{\pa^2 f}{\pa \theta \pa y}+e^{\beta\theta}g (a^{\ts}\nabla)_2g\pa_yf\frac{\pa^2 f}{\pa x\pa y}+g(a^{\ts}\nabla)_2g\pa_yf\frac{\pa^2 f}{\pa y\pa y} \Big)\\
&=&\left[ \frac{\pa^2 f}{\pa \theta\pa \theta}\right]^2+\left[ e^{2\beta\theta} \frac{\pa^2 f}{\pa x\pa x}+2e^{\beta\theta}\frac{\pa^2 f}{\pa x\pa y}+\frac{\pa^2 f}{\pa y\pa y}\right]^2+\left[e^{\beta \theta}\frac{\pa^2 f}{\pa \theta \pa x}+\frac{\pa^2 f}{\pa \theta\pa y}+\beta e^{\beta\theta}\frac{\pa f}{\pa x} \right]^2\\
&&+\left[e^{\beta \theta}\frac{\pa^2 f}{\pa \theta \pa x}+\frac{\pa^2 f}{\pa \theta\pa y} \right]^2+\left[-g\frac{\pa^2 f}{\pa \theta\pa y} -\pa_yf\pa_{\theta}g\right]^2+ \left[-g e ^{\beta\theta}\frac{\pa^2 f}{\pa x\pa y} -g\frac{\pa^2f}{\pa y\pa y}-(a^{\ts}\nabla)_2g \pa_yf\right]^2\\
&&+2\beta (a^{\ts}\nabla)_2f\Big[ e^{\beta \theta} \frac{\pa^2 f}{\pa \theta \pa x}+\frac{\pa^2 f}{\pa \theta \pa y}\Big]-2\beta (a^{\ts}\nabla)_2f\frac{\pa^2 f}{\pa \theta \pa y}-2g\pa_{\theta}g\pa_yf\frac{\pa^2 f}{\pa \theta \pa y}\\
&&-2\beta \pa_{\theta}f\Big[2e^{\beta\theta}\frac{\pa^2 f}{\pa x\pa y}+e^{2\beta\theta}\frac{\pa^2 f}{\pa x\pa x}+\frac{\pa^2 f}{\pa y\pa y} \Big]+2\beta \pa_{\theta}f\Big[e^{\beta\theta}\frac{\pa^2 f}{\pa x\pa y}+\frac{\pa^2 f}{\pa y\pa y} \Big]\\
&&-2g(a^{\ts}\nabla)_2g\pa_yf\Big[e^{\beta\theta}\frac{\pa^2f}{\pa x\pa y}+\frac{\pa^2 f}{\pa y\pa y}\Big].
\eeaa
Complete square for the above terms, we end up with 
\beaa 
&&[QX+D]^{\ts}[QX+D]+[PX+E]^{\ts}[PX+E]+2C^{\ts}X+2F^{\ts}X+2G^{\ts}X\\
&=&\left[ \frac{\pa^2 f}{\pa \theta\pa \theta}\right]^2+\left[ e^{2\beta\theta} \frac{\pa^2 f}{\pa x\pa x}+2e^{\beta\theta}\frac{\pa^2 f}{\pa x\pa y}+\frac{\pa^2 f}{\pa y\pa y}-\beta \pa_{\theta}f\right]^2-\beta^2|\pa_{\theta}f|^2\\
&&+\left[e^{\beta \theta}\frac{\pa^2 f}{\pa \theta \pa x}+\frac{\pa^2 f}{\pa \theta\pa y}+\beta e^{\beta\theta}\frac{\pa f}{\pa x} \right]^2
+\left[e^{\beta \theta}\frac{\pa^2 f}{\pa \theta \pa x}+\frac{\pa^2 f}{\pa \theta\pa y}+\beta (a^{\ts}\nabla)_2f \right]^2-\beta^2|(a^{\ts}\nabla)_2f|^2\\
&&+\Big[g\frac{\pa^2 f}{\pa \theta\pa y}+\pa_{\theta}g\pa_yf-\frac{\beta(a^{\ts}\nabla)_2f}{g}-\pa_{\theta}g\pa_yf \Big]^2-\Big[\frac{\beta(a^{\ts}\nabla)_2f}{g}+\pa_{\theta}g\pa_yf \Big]^2\\
&&+\Big[ge^{\beta\theta}\frac{\pa^2 f}{\pa x\pa y}+g\frac{\pa^2 f}{\pa y\pa y} +(a^{\ts}\nabla)_2g\pa_yf+\frac{\beta \pa_{\theta}f}{g}-(a^{\ts}\nabla)_2g\pa_yf \Big]^2-\Big[\frac{\beta \pa_{\theta}f}{g}-(a^{\ts}\nabla)_2g\pa_yf \Big]^2\\
&&+2\Big[\frac{\beta(a^{\ts}\nabla)_2f}{g}+\pa_{\theta}g\pa_yf \Big]\pa_{\theta}g\pa_yf-2\pa_yf(a^{\ts}\nabla)_2g\times \Big[ \frac{\beta \pa_{\theta}f}{g}-(a^{\ts}\nabla)_2g\pa_yf \Big]
\eeaa
The first order terms generate tensor $\mathfrak{R}^G(\nabla f,\nabla f)$ and the sum of square terms generate vectors $\Lambda_1$ and $\Lambda_2$. We further formulate the above two terms as below 
\beaa
&&\left[e^{\beta \theta}\frac{\pa^2 f}{\pa \theta \pa x}+\frac{\pa^2 f}{\pa \theta\pa y}+\beta e^{\beta\theta}\frac{\pa f}{\pa x} \right]^2
+\left[e^{\beta \theta}\frac{\pa^2 f}{\pa \theta \pa x}+\frac{\pa^2 f}{\pa \theta\pa y}+\beta (a^{\ts}\nabla)_2f \right]^2\\
&=&2\left[e^{\beta \theta}\frac{\pa^2 f}{\pa \theta \pa x}+\frac{\pa^2 f}{\pa \theta\pa y}+\beta e^{\beta\theta}\frac{\pa f}{\pa x}+\frac{\beta}{2}\pa_yf \right]^2+\frac{\beta^2}{2}|\pa_yf|^2.
\eeaa
Adding $\frac{\beta^2}{2}|\pa_yf|^2$ into the term $\mathfrak{R}^G(\nabla f,\nabla f)$ again, we further expand the tensor term $\mathfrak{R}^G(\nabla f,\nabla f)$ below, 
\beaa
&&\mathfrak{R}^G(\nabla f,\nabla f)\\
&=&-\beta^2[|\pa_{\theta}f|^2+|(a^{\ts}\nabla)_2f|^2]-\Big[\frac{\beta \pa_{\theta}f}{g}-(a^{\ts}\nabla)_2g\pa_yf \Big]^2-\Big[\frac{\beta(a^{\ts}\nabla)_2f}{g}+\pa_{\theta}g\pa_yf \Big]^2\\
&&+2\Big[\frac{\beta(a^{\ts}\nabla)_2f}{g}+\pa_{\theta}g\pa_yf \Big]\pa_{\theta}g\pa_yf-2\pa_yf(a^{\ts}\nabla)_2g\times \Big[ \frac{\beta \pa_{\theta}f}{g}-(a^{\ts}\nabla)_2g\pa_yf \Big]+\frac{\beta^2}{2}|\pa_yf|^2\\
&=& -\beta^2\Gamma_1(f,f)-\frac{\beta^2}{g^2}|(a^{\ts}\nabla)_1f|^2
-|(a^{\ts}\nabla)_2(\log g)|^2 |(z^{\ts}\nabla)_1f|^2-2\frac{\beta}{g}(a^{\ts}\nabla)_2\log g(a^{\ts}\nabla)_1f(z^{\ts}\nabla)_1f\\
&&-\frac{\beta^2}{g^2}|(a^{\ts}\nabla)_2f|^2-|(a^{\ts}\nabla)_1\log g|^2|(z^{\ts}\nabla)_1f|^2+2\frac{\beta}{g} (a^{\ts}\nabla)_1\log g (a^{\ts}\nabla)_2f(z^{\ts}\nabla)_1f\\
&&-2\frac{\beta}{g}(a^{\ts}\nabla)_1\log g(a^{\ts}\nabla)_2f (z^{\ts}\nabla)_1f
+2|(a^{\ts}\nabla)_1\log g|^2|(z^{\ts}\nabla)_1f|^2\\
&&+2\frac{\beta}{g}(a^{\ts}\nabla)_2\log g(a^{\ts}\nabla)_1f(z^{\ts}\nabla)_1f+2|(a^{\ts}\nabla)_2\log g|^2|(z^{\ts}\nabla)_1f|^2+\frac{\beta^2}{2g^2}\Gamma_1^z(f,f).
\eeaa
By grouping the bilinear terms of $\nabla f$, we get 
\beaa
\mathfrak{R}^G(\nabla f,\nabla f)
=\Gamma_1(\log g,\log g)\Gamma_1^z(f,f)-\beta^2(1+\frac{1}{g^2})\Gamma_1(f,f)+\frac{\beta^2}{2g^2}\Gamma_1^z(f,f).
\eeaa
\qed
\end{proof}{}

We are now left to compute the three tensor terms.

\begin{proof}[Proof of Lemma \ref{SE2 tensor}]
For $\mathbf{SE}(2)$, we have $n=2$ and $m=1$. Recall from Theorem \ref{thm1}, we denote 
$\mathfrak{R}_{ab}(\nabla f,\nabla f)=\mathfrak{R}_a(\nabla f,\nabla f)+\mathfrak{R}_b(\nabla f,\nabla f)$ where $\mathfrak{R}_b(\nabla f,\nabla f)$ represents the tensor term involving drift $b$. We thus have
\beaa
\mathfrak{R}_a(\nabla f,\nabla f)
&=&\sum_{i,k=1}^2\sum_{i',\hat i,\hat k=1}^{3} \la a^{\ts}_{ii'} (\frac{\partial a^{\ts}_{i \hat i}}{\partial x_{i'}} \frac{\partial a^{\ts}_{k\hat k}}{\partial x_{\hat i}}\frac{\partial f}{\partial x_{\hat k}}) ,(a^{\ts}\nabla)_kf\ra_{\hR^2}  \nonumber\\
&&+\sum_{i,k=2}^n\sum_{i',\hat i,\hat k=1}^{3} \la a^{\ts}_{ii'}a^{\ts}_{i \hat i} (\frac{\partial }{\partial x_{i'}} \frac{\partial a^{\ts}_{k\hat k}}{\partial x_{\hat i}})(\frac{\partial f}{\partial x_{\hat k}}) ,(a^{\ts}\nabla)_kf\ra_{\hR^2} \nonumber\\
&&-\sum_{i,k=1}^2\sum_{i',\hat i,\hat k=1}^{3} \la a^{\ts}_{k\hat k}\frac{\partial a^{\ts}_{ii'}}{\partial x_{\hat k}} \frac{\partial a^{\ts}_{i \hat i}}{\partial x_{i'}} \frac{\partial f}{\partial x_{\hat i}})  ,(a^{\ts}\nabla)_kf\ra_{\hR^2}  \nonumber\\
&&-\sum_{i,k=1}^2\sum_{i',\hat i,\hat k=1}^{3} \la a^{\ts}_{k\hat k} a^{\ts}_{ii'} (\frac{\partial }{\partial x_{\hat k}} \frac{\partial a^{\ts}_{i \hat i}}{\partial x_{i'}}) \frac{\partial f}{\partial x_{\hat i}}  ,(a^{\ts}\nabla)_kf\ra_{\hR^2},  \nonumber\\
&=& \cI_1+\cI_2+\cI_3+\cI_4.
\eeaa
By direct computations, we have
\beaa
\cI_1&=&\sum_{i=1}^2\sum_{i',\hat i,\hat k=1}^{3}  \Big[ a^{\ts}_{ii'} (\frac{\partial a^{\ts}_{i \hat i}}{\partial x_{i'}} \frac{\partial a^{\ts}_{1\hat k}}{\partial x_{\hat i}}\frac{\partial f}{\partial x_{\hat k}})(a^{\ts}\nabla)_1f+  a^{\ts}_{ii'} (\frac{\partial a^{\ts}_{i \hat i}}{\partial x_{i'}} \frac{\partial a^{\ts}_{2\hat k}}{\partial x_{\hat i}}\frac{\partial f}{\partial x_{\hat k}}) (a^{\ts}\nabla)_2f\Big]=0 ; \\
\cI_2&=&\sum_{i=2}^n\sum_{i',\hat i,\hat k=1}^{3} \Big[ a^{\ts}_{ii'}a^{\ts}_{i \hat i} (\frac{\partial }{\partial x_{i'}} \frac{\partial a^{\ts}_{1\hat k}}{\partial x_{\hat i}})(\frac{\partial f}{\partial x_{\hat k}}) (a^{\ts}\nabla)_1f+ a^{\ts}_{ii'}a^{\ts}_{i \hat i} (\frac{\partial }{\partial x_{i'}} \frac{\partial a^{\ts}_{2\hat k}}{\partial x_{\hat i}})(\frac{\partial f}{\partial x_{\hat k}})(a^{\ts}\nabla)_2f\Big]\\
&=&a^{\ts}_{11}a^{\ts}_{11}\frac{\pa^2}{\pa \theta \pa \theta}a^{\ts}_{22}\frac{\pa f}{\pa x}(a^{\ts}\nabla )_2f=\beta^2e^{\beta \theta}\frac{\pa f}{\pa x}(a^{\ts}\nabla )_2f;  \\
\cI_3&=&-\sum_{i=1}^2\sum_{i',\hat i,\hat k=1}^{3} \Big[ a^{\ts}_{1\hat k}\frac{\partial a^{\ts}_{ii'}}{\partial x_{\hat k}} \frac{\partial a^{\ts}_{i \hat i}}{\partial x_{i'}} \frac{\partial f}{\partial x_{\hat i}})(a^{\ts}\nabla)_1f+  a^{\ts}_{2\hat k}\frac{\partial a^{\ts}_{ii'}}{\partial x_{\hat k}} \frac{\partial a^{\ts}_{i \hat i}}{\partial x_{i'}} \frac{\partial f}{\partial x_{\hat i}})(a^{\ts}\nabla)_2f\Big]=0;\\
\cI_4&=&-\sum_{i=1}^2\sum_{i',\hat i,\hat k=1}^{3} \Big[ a^{\ts}_{1\hat k} a^{\ts}_{ii'} (\frac{\partial }{\partial x_{\hat k}} \frac{\partial a^{\ts}_{i \hat i}}{\partial x_{i'}}) \frac{\partial f}{\partial x_{\hat i}}(a^{\ts}\nabla)_1f+ a^{\ts}_{2\hat k} a^{\ts}_{ii'} (\frac{\partial }{\partial x_{\hat k}} \frac{\partial a^{\ts}_{i \hat i}}{\partial x_{i'}}) \frac{\partial f}{\partial x_{\hat i}}(a^{\ts}\nabla)_2f\Big]=0.
\eeaa
For the drift term in tensor $\mathfrak{R}_{ab}$, taking $b=-\frac{1}{2}aa^{\ts}\nabla V$, we get
\beaa
\mathfrak{R}_b
&=&\sum_{i,k=1}^2\sum_{\hat i,\hat k,k'=1}^3\left[a^{\ts}_{i\hat i}\frac{\pa a^{\ts}_{k\hat k} }{\pa x_{\hat i}}a^{\ts}_{kk'}\frac{\pa V}{\pa x_{k'}} \frac{\pa f}{\pa x_{\hat k}}(a^{\ts}\nabla)_i f \right]\\
&&+\sum_{i,k=1}^2\sum_{\hat i,\hat k,k'=1}^3\left[a^{\ts}_{i\hat i}\frac{\pa a^{\ts}_{kk'}}{\pa x_{\hat i}}a^{\ts}_{k\hat k}\frac{\pa V}{\pa x_{k'}} \frac{\pa f}{\pa x_{\hat k}}(a^{\ts}\nabla)_i f\right]\\
&&+\sum_{i,k=1}^2\sum_{\hat i,\hat k,k'=1}^3\left[ a^{\ts}_{i\hat i}a^{\ts}_{k\hat k}a^{\ts}_{kk'}\frac{\pa^2 V}{\pa x_{\hat i}\pa x_{k'}}\frac{\pa f}{\pa x_{\hat k}}(a^{\ts}\nabla)_i f\right]\\
&&-\sum_{i,k=1}^2\sum_{\hat i,\hat k,k'=1}^3\left[a^{\ts}_{k\hat k}a^{\ts}_{kk'}\frac{\pa a^{\ts}_{i\hat i}}{\pa x_{\hat k}}\frac{\pa V}{\pa x_{k'}} \frac{\pa f}{\pa x_{\hat i}}(a^{\ts}\nabla)_if\right] \\
&=& \cJ_1+\cJ_2+\cJ_3+\cJ_4.
\eeaa
Plugging into the matrices $a^{\ts}$ and $z^{\ts}$, we get
\beaa
\cJ_1
&=&\sum_{\hat i,\hat k,k'=1}^3\left[ a^{\ts}_{1\hat i}\frac{\pa a^{\ts}_{1\hat k} }{\pa x_{\hat i}}a^{\ts}_{1k'}\frac{\pa V}{\pa x_{k'}} \frac{\pa f}{\pa x_{\hat k}}(a^{\ts}\nabla)_1 f+a^{\ts}_{2\hat i}\frac{\pa a^{\ts}_{1\hat k} }{\pa x_{\hat i}}a^{\ts}_{1k'}\frac{\pa V}{\pa x_{k'}} \frac{\pa f}{\pa x_{\hat k}}(a^{\ts}\nabla)_2 f\right]\\
&&+\sum_{\hat i,\hat k,k'=1}^3\left[ a^{\ts}_{1\hat i}\frac{\pa a^{\ts}_{2\hat k} }{\pa x_{\hat i}}a^{\ts}_{2k'}\frac{\pa V}{\pa x_{k'}} \frac{\pa f}{\pa x_{\hat k}}(a^{\ts}\nabla)_1 f+a^{\ts}_{2\hat i}\frac{\pa a^{\ts}_{2\hat k} }{\pa x_{\hat i}}a^{\ts}_{2k'}\frac{\pa V}{\pa x_{k'}} \frac{\pa f}{\pa x_{\hat k}}(a^{\ts}\nabla)_2 f\right]\\
&=&\beta e^{\beta\theta} (a^{\ts}\nabla)_2V\frac{\pa f}{\pa x}(a^{\ts}\nabla )_1 f;
\eeaa
\beaa
\cJ_2
&=&\sum_{\hat i,\hat k,k'=1}^3\left[a^{\ts}_{1\hat i}\frac{\pa a^{\ts}_{1k'}}{\pa x_{\hat i}}a^{\ts}_{1\hat k}\frac{\pa V}{\pa x_{k'}} \frac{\pa f}{\pa x_{\hat k}}(a^{\ts}\nabla)_1 f+a^{\ts}_{2\hat i}\frac{\pa a^{\ts}_{1k'}}{\pa x_{\hat i}}a^{\ts}_{1\hat k}\frac{\pa V}{\pa x_{k'}} \frac{\pa f}{\pa x_{\hat k}}(a^{\ts}\nabla)_2 f\right]\\
&&+\sum_{\hat i,\hat k,k'=1}^3\left[a^{\ts}_{1\hat i}\frac{\pa a^{\ts}_{2k'}}{\pa x_{\hat i}}a^{\ts}_{2\hat k}\frac{\pa V}{\pa x_{k'}} \frac{\pa f}{\pa x_{\hat k}}(a^{\ts}\nabla)_1 f+a^{\ts}_{2\hat i}\frac{\pa a^{\ts}_{2k'}}{\pa x_{\hat i}}a^{\ts}_{2\hat k}\frac{\pa V}{\pa x_{k'}} \frac{\pa f}{\pa x_{\hat k}}(a^{\ts}\nabla)_2 f\right]\\
&=&\beta e^{\beta \theta}\frac{\pa V}{\pa x}(a^{\ts}\nabla)_2f(a^{\ts}\nabla)_1f;
\eeaa
\beaa
\cJ_3
&=&\sum_{\hat i,k'=1}^3\left[ a^{\ts}_{1\hat i}a^{\ts}_{1k'}\frac{\pa^2 V}{\pa x_{\hat i}\pa x_{k'}} |(a^{\ts}\nabla)_1 f|^2+a^{\ts}_{2\hat i}a^{\ts}_{1k'}\frac{\pa^2 V}{\pa x_{\hat i}\pa x_{k'}}(a^{\ts}\nabla)_1 f(a^{\ts}\nabla)_2 f\right]\\
&&+\sum_{\hat i,k'=1}^3\left[ a^{\ts}_{1\hat i}a^{\ts}_{2k'}\frac{\pa^2 V}{\pa x_{\hat i}\pa x_{k'}}(a^{\ts}\nabla)_2 f(a^{\ts}\nabla)_1 f+a^{\ts}_{2\hat i}a^{\ts}_{2k'}\frac{\pa^2 V}{\pa x_{\hat i}\pa x_{k'}}|(a^{\ts}\nabla)_2 f|^2\right]\\
&=&\frac{\pa^2 V}{\pa \theta\pa \theta} |(a^{\ts}\nabla)_1 f|^2+2(e^{\beta\theta}\frac{\pa^2 V}{\pa \theta\pa x}+\frac{\pa^2 V}{\pa \theta \pa y} )(a^{\ts}\nabla)_1 f(a^{\ts}\nabla)_2 f\\
&&+\sum_{\hat i,k'=1}^3a^{\ts}_{2\hat i}a^{\ts}_{2k'}\frac{pa^2 V}{\pa x_{\hat i}\pa x_{k'}})|(a^{\ts}\nabla)_2 f|^2;
\eeaa

\beaa
\cJ_4
&=&-\sum_{\hat i,\hat k,k'=1}^3\left[a^{\ts}_{1\hat k}a^{\ts}_{1k'}\frac{\pa a^{\ts}_{1\hat i}}{\pa x_{\hat k}}\frac{\pa V}{\pa x_{k'}} \frac{\pa f}{\pa x_{\hat i}}(a^{\ts}\nabla)_1f+a^{\ts}_{1\hat k}a^{\ts}_{1k'}\frac{\pa a^{\ts}_{2\hat i}}{\pa x_{\hat k}}\frac{\pa V}{\pa x_{k'}} \frac{\pa f}{\pa x_{\hat i}}(a^{\ts}\nabla)_2f\right]\\
&&-\sum_{\hat i,\hat k,k'=1}^3\left[a^{\ts}_{2\hat k}a^{\ts}_{2k'}\frac{\pa a^{\ts}_{1\hat i}}{\pa x_{\hat k}}\frac{\pa V}{\pa x_{k'}} \frac{\pa f}{\pa x_{\hat i}}(a^{\ts}\nabla)_1f+a^{\ts}_{2\hat k}a^{\ts}_{2k'}\frac{\pa a^{\ts}_{2\hat i}}{\pa x_{\hat k}}\frac{\pa V}{\pa x_{k'}} \frac{\pa f}{\pa x_{\hat i}}(a^{\ts}\nabla)_2f\right]\\
&=&-\beta e^{\beta\theta}(a^{\ts}\nabla)_1V \frac{\pa f}{\pa x}(a^{\ts}\nabla)_2f.
\eeaa
 Combing the above computations, we get the tensor $\mathfrak{R}_{ab}$.  Now we turn to the second tensor $\mathfrak{R}_{zb}$, which has the following form,
 \beaa
\mathfrak{R}_{zb}(\nabla f,\nabla f)&=&\sum_{i=1}^2\sum_{i',\hat i,\hat k=1}^{3} \la a^{\ts}_{ii'} (\frac{\partial a^{\ts}_{i \hat i}}{\partial x_{i'}} \frac{\partial z^{\ts}_{1\hat k}}{\partial x_{\hat i}}\frac{\partial f}{\partial x_{\hat k}}) ,(z^{\ts}\nabla)_1f\ra_{\hR}\nonumber \\
	&&+\sum_{i=1}^2\sum_{i',\hat i,\hat k=1}^{3} \la a^{\ts}_{ii'}a^{\ts}_{i \hat i} (\frac{\partial }{\partial x_{i'}} \frac{\partial z^{\ts}_{1\hat k}}{\partial x_{\hat i}})(\frac{\partial f}{\partial x_{\hat k}}) ,(z^{\ts}\nabla)_1f\ra_{\hR} \nonumber \\
	&&-\sum_{i=1}^2\sum_{i',\hat i,\hat k=1}^{3} \la z^{\ts}_{1\hat k}\frac{\partial a^{\ts}_{ii'}}{\partial x_{\hat k}} \frac{\partial a^{\ts}_{i \hat i}}{\partial x_{i'}} \frac{\partial f}{\partial x_{\hat i}})  ,(z^{\ts}\nabla)_1f\ra_{\hR} \nonumber \\
&&-\sum_{i=1}^2\sum_{i',\hat i,\hat k=1}^{3} \la z^{\ts}_{1\hat k} a^{\ts}_{ii'} (\frac{\partial }{\partial x_{\hat k}} \frac{\partial a^{\ts}_{i \hat i}}{\partial x_{i'}}) \frac{\partial f}{\partial x_{\hat i}}  ,(z^{\ts}\nabla)_1f\ra_{\hR}\nonumber \\
&&-\sum_{i=1}^2 \sum_{\hat i,\hat k=1}^{3}\la (z^{\ts}_{1\hat i}\frac{\pa b_{\hat k}}{\pa x_{\hat i}}\frac{\pa f}{\pa x_{\hat k}}-b_{\hat k}\frac{\pa z^{\ts}_{1\hat i}}{\pa x_{\hat k}} \frac{\pa f}{\pa x_{\hat i}} ),(z^{\ts}\nabla f)_1\ra_{\hR},\nonumber\\
&=&\cI_1^z+\cI_2^z+\cI_3^z+\cI_4^z+\mathfrak R^{z}_b(\nabla f,\nabla f).
\eeaa 
where we denote further that 
\beaa
\mathfrak{R}_{b}^z(\nabla f,\nabla f)=-\sum_{\hat i,\hat k=1}^{3} (z^{\ts}_{1\hat i}\frac{\pa b_{\hat k}}{\pa x_{\hat i}}\frac{\pa f}{\pa x_{\hat k}}-b_{\hat k}\frac{\pa z^{\ts}_{i\hat i}}{\pa x_{\hat k}} \frac{\pa f}{\pa x_{\hat i}} )(z^{\ts}\nabla f)_1.
\eeaa
By taking $b=-aa^{\ts}\nabla V$, we further obtain that
\beaa
\mathfrak{R}_{b}^z(\nabla f,\nabla f)
&=& -\sum_{\hat i,\hat k=1}^{3}\left[ z^{\ts}_{1\hat i}\frac{\pa b_{\hat k}}{\pa x_{\hat i}}\frac{\pa f}{\pa x_{\hat k}}(z^{\ts}\nabla f)_1-b_{\hat k}\frac{\pa z^{\ts}_{i\hat i}}{\pa x_{\hat k}} \frac{\pa f}{\pa x_{\hat i}} (z^{\ts}\nabla f)_1\right]\\
&=&\sum_{k=1}^2\sum_{\hat i,\hat k,k'=1}^3\left[z^{\ts}_{1\hat i}\frac{\pa a^{\ts}_{k\hat k} }{\pa x_{\hat i}}a^{\ts}_{kk'}\frac{\pa V}{\pa x_{k'}} \frac{\pa f}{\pa x_{\hat k}}(z^{\ts}\nabla)_1 f \right]\\
&&+\sum_{k=1}^2\sum_{\hat i,\hat k,k'=1}^3\left[z^{\ts}_{1\hat i}\frac{\pa a^{\ts}_{kk'}}{\pa x_{\hat i}}a^{\ts}_{k\hat k}\frac{\pa V}{\pa x_{k'}} \frac{\pa f}{\pa x_{\hat k}}(z^{\ts}\nabla)_1 f\right]\\ 
&&+\sum_{k=1}^2\sum_{\hat i,\hat k,k'=1}^3\left[ z^{\ts}_{1\hat i}a^{\ts}_{k\hat k}a^{\ts}_{kk'}\frac{\pa^2 V}{\pa x_{\hat i}\pa x_{k'}}\frac{\pa f}{\pa x_{\hat k}}(z^{\ts}\nabla)_1 f\right]
\eeaa 
\beaa 
&&-\sum_{k=1}^2\sum_{\hat i,\hat k,k'=1}^3\left[a^{\ts}_{k\hat k}a^{\ts}_{kk'}\frac{\pa z^{\ts}_{1\hat i}}{\pa x_{\hat k}}\frac{\pa V}{\pa x_{k'}} \frac{\pa f}{\pa x_{\hat i}}(z^{\ts}\nabla)_1f\right] \\
&=& \cJ_1^z+\cJ_2^z+\cJ_3^z+\cJ_4^z.
\eeaa
By direct computations, it is not hard to observe that
\beaa 
\cI_1^z&=& \sum_{i=1}^2\sum_{i',\hat i,\hat k=1}^{3} \la a^{\ts}_{ii'} (\frac{\partial a^{\ts}_{i \hat i}}{\partial x_{i'}} \frac{\partial z^{\ts}_{1\hat k}}{\partial x_{\hat i}}\frac{\partial f}{\partial x_{\hat k}}) ,(z^{\ts}\nabla)_1f\ra_{\hR}=0 \\
	\cI_2^z&=&\sum_{i=1}^2\sum_{i',\hat i,\hat k=1}^{3} \la a^{\ts}_{ii'}a^{\ts}_{i \hat i} (\frac{\partial }{\partial x_{i'}} \frac{\partial z^{\ts}_{1\hat k}}{\partial x_{\hat i}})(\frac{\partial f}{\partial x_{\hat k}}),(z^{\ts}\nabla)_1f\ra_{\hR}\\
	&&= \sum_{i=1}^2\sum_{i',\hat i=1}^{3}  a^{\ts}_{ii'}a^{\ts}_{i \hat i} \frac{\pa^2 z^{\ts}_{1\hat k}}{\partial x_{i'}\partial x_{\hat i}}\pa_yf (z^{\ts}\nabla)_1f \\
	\cI_3^z&=&-\sum_{i=1}^2\sum_{i',\hat i,\hat k=1}^{3} \la z^{\ts}_{1\hat k}\frac{\partial a^{\ts}_{ii'}}{\partial x_{\hat k}} \frac{\partial a^{\ts}_{i \hat i}}{\partial x_{i'}} \frac{\partial f}{\partial x_{\hat i}})  ,(z^{\ts}\nabla)_1f\ra_{\hR}=0 \nonumber \\
\cI_4^z&=&-\sum_{i=1}^2\sum_{i',\hat i,\hat k=1}^{3} \la z^{\ts}_{1\hat k} a^{\ts}_{ii'} (\frac{\partial }{\partial x_{\hat k}} \frac{\partial a^{\ts}_{i \hat i}}{\partial x_{i'}}) \frac{\partial f}{\partial x_{\hat i}}  ,(z^{\ts}\nabla)_1f\ra_{\hR}=0,\nonumber 
\eeaa 
and 
\beaa 
\cJ_1^z&=&\sum_{k=1}^2\sum_{\hat i,\hat k,k'=1}^3\left[z^{\ts}_{1\hat i}\frac{\pa a^{\ts}_{k\hat k} }{\pa x_{\hat i}}a^{\ts}_{kk'}\frac{\pa V}{\pa x_{k'}} \frac{\pa f}{\pa x_{\hat k}}(z^{\ts}\nabla)_1 f \right]=0;\\
\cJ_2^z&=&\sum_{k=1}^2\sum_{\hat i,\hat k,k'=1}^3\left[z^{\ts}_{1\hat i}\frac{\pa a^{\ts}_{kk'}}{\pa x_{\hat i}}a^{\ts}_{k\hat k}\frac{\pa V}{\pa x_{k'}} \frac{\pa f}{\pa x_{\hat k}}(z^{\ts}\nabla)_1 f\right]=0;\\
\cJ_4^z&=&-\sum_{k=1}^2\sum_{\hat i,\hat k,k'=1}^3\left[a^{\ts}_{k\hat k}a^{\ts}_{kk'}\frac{\pa z^{\ts}_{1\hat i}}{\pa x_{\hat k}}\frac{\pa V}{\pa x_{k'}} \frac{\pa f}{\pa x_{\hat i}}(z^{\ts}\nabla)_1f\right]\\
&=& -\sum_{k=1}^2(a^{\ts}\nabla)_kz^{\ts}_{13}(a^{\ts}\nabla)_kV\pa_yf(z^{\ts}\nabla)_1f.
\eeaa 
\beaa 
\cJ_3^z&=& \sum_{k=1}^2\sum_{\hat i,\hat k,k'=1}^3\left[ z^{\ts}_{1 \hat i}a^{\ts}_{k\hat k}a^{\ts}_{kk'}\frac{\pa^2 V}{\pa x_{\hat i}\pa x_{k'}}\frac{\pa f}{\pa x_{\hat k}}(z^{\ts}\nabla)_1 f\right]\\
&=&\sum_{\hat i,\hat k,k'=1}^3\left[ z^{\ts}_{1\hat i}a^{\ts}_{1\hat k}a^{\ts}_{1k'}\frac{\pa^2 V}{\pa x_{\hat i}\pa x_{k'}}\frac{\pa f}{\pa x_{\hat k}}(z^{\ts}\nabla)_1 f+ z^{\ts}_{1\hat i}a^{\ts}_{2\hat k}a^{\ts}_{2k'}\frac{\pa^2 V}{\pa x_{\hat i}\pa x_{k'}}\frac{\pa f}{\pa x_{\hat k}}(z^{\ts}\nabla)_1 f\right]\\
&=&\sum_{\hat i,k'=1}^3\left[ z^{\ts}_{1\hat i}a^{\ts}_{1k'}\frac{\pa^2 V}{\pa x_{\hat i}\pa x_{k'}} (a^{\ts}\nabla)_1 f(z^{\ts}\nabla)_1 f+ z^{\ts}_{1\hat i}a^{\ts}_{2k'}\frac{\pa^2 V}{\pa x_{\hat i}\pa x_{k'}}(a^{\ts}\nabla)_2 f(z^{\ts}\nabla)_1 f\right]\\
&=&-g\frac{\pa^2 V}{\pa \theta\pa y}(a^{\ts}\nabla)_1 f(z^{\ts}\nabla)_1 f-g(e^{\beta\theta}\frac{\pa^2 V}{\pa x\pa y}+\frac{\pa^2 V}{\pa y\pa y} )(a^{\ts}\nabla)_2 f(z^{\ts}\nabla)_1 f;
\eeaa 
Now we are left to compute the term $\mathfrak{R}_{\rho^*}$. Recall that, 
\beaa
\mathfrak{R}_{\rho^*}(\nabla f,\nabla f)&=&2\sum_{k=1}^1 \sum_{i=1}^2\sum_{k',\hat k,\hat i,i'=1}^{3}\left[\frac{\pa }{\pa x_{k'}} z^{\ts}_{kk'} z^{\ts}_{k\hat k}\frac{\pa}{\pa x_{\hat k}}a^{\ts}_{i\hat i}\frac{\pa f}{\pa x_{\hat i}}a^{\ts}_{ii'}\frac{\pa f}{\pa x_{i'}}\right]\\
 &&+2\sum_{k=1}^1 \sum_{i=1}^2\sum_{k',\hat k,\hat i,i'=1}^{3}\left[z^{\ts}_{kk'}\frac{\pa }{\pa x_{k'}} z^{\ts}_{k\hat k} \frac{\pa}{\pa x_{\hat k}}a^{\ts}_{i\hat i}\frac{\pa f}{\pa x_{\hat i}}a^{\ts}_{ii'}\frac{\pa f}{\pa x_{i'}} \right.\nonumber\\
 &&\quad\quad\quad\quad\quad\quad\quad\quad+z^{\ts}_{kk'} z^{\ts}_{k\hat k} \frac{\pa^2}{\pa x_{k'}\pa x_{\hat k}}a^{\ts}_{i\hat i}\frac{\pa f}{\pa x_{\hat i}}a^{\ts}_{ii'}\frac{\pa f}{\pa x_{i'}}\nonumber \\
&&\left.\quad\quad\quad\quad\quad\quad\quad\quad+z^{\ts}_{kk'} z^{\ts}_{k\hat k} \frac{\pa}{\pa x_{\hat k}}a^{\ts}_{i\hat i}\frac{\pa f}{\pa x_{\hat i}}\frac{\pa }{\pa x_{k'}}a^{\ts}_{ii'}\frac{\pa f}{\pa x_{i'}} \right].\nonumber \\
&&+2\sum_{k=1}^1 \sum_{i=1}^2\sum_{\hat k,\hat i,i'=1}^{3}(z^{\ts}\nabla\log\rho^*)_k \left[ z^{\ts}_{k\hat k}\frac{\pa}{\pa x_{\hat k}}a^{\ts}_{i\hat i}\frac{\pa f}{\pa x_{\hat i}}a^{\ts}_{ii'}\frac{\pa f}{\pa x_{i'}} \right] \nonumber\\
&&-2\sum_{j=1}^1\sum_{l=1}^2\sum_{ l',\hat l,\hat j,j'=1}^{3}\left[\frac{\pa }{\pa x_{l'}} a^{\ts}_{ll'} a^{\ts}_{l\hat l} \frac{\pa}{\pa x_{\hat l}}z^{\ts}_{j\hat j}\frac{\pa f}{\pa x_{\hat j}}z^{\ts}_{jj'}\frac{\pa f}{\pa x_{j'}} \right] \nonumber\\
 &&- 2\sum_{j=1}^1\sum_{l=1}^2\sum_{ l',\hat l,\hat j,j'=1}^{3}\left[ a^{\ts}_{ll'}\frac{\pa }{\pa x_{l'}}a^{\ts}_{l\hat l} \frac{\pa}{\pa x_{\hat l}}z^{\ts}_{j\hat j}\frac{\pa f}{\pa x_{\hat j}}z^{\ts}_{jj'}\frac{\pa f}{\pa x_{j'}}  \right.\nonumber\\
 &&\quad\quad\quad\quad\quad\quad\quad\quad+a^{\ts}_{ll'}a^{\ts}_{l\hat l} \frac{\pa^2}{\pa x_{l'}\pa x_{\hat l}}z^{\ts}_{j\hat j}\frac{\pa f}{\pa x_{\hat j}}z^{\ts}_{jj'}\frac{\pa f}{\pa x_{j'}}\nonumber \\
&&\left.\quad\quad\quad\quad\quad\quad\quad\quad+a^{\ts}_{ll'}a^{\ts}_{l\hat l} \frac{\pa}{\pa x_{\hat l}}z^{\ts}_{j\hat j}\frac{\pa f}{\pa x_{\hat j}}\frac{\partial}{\pa x_{l'}}z^{\ts}_{jj'}\frac{\pa f}{\pa x_{j'}}  \right] \nonumber \\
&&-2\sum_{j=1}^1\sum_{l=1}^2\sum_{\hat l,\hat j,j'=1}^{3}(a^{\ts}\nabla\log\rho^*)_l \left[ a^{\ts}_{l\hat l}\frac{\pa}{\pa x_{\hat l}}z^{\ts}_{j\hat j}\frac{\pa f}{\pa x_{\hat j}}z^{\ts}_{jj'}\frac{\pa f}{\pa x_{j'}} \right]\nonumber\\
&=&\sum_{i=1}^{10}\cK_i.
\eeaa
By direct computation, we get 
\beaa
\cK_1&=&0, \quad \cK_2=0,\quad \cK_3=0,\quad \cK_4=0,\quad \cK_5=0,\quad \cK_6=0,\quad \cK_7=0;\\
\cK_8&=& -2\sum_{l=1}^2\sum_{l',\hat l=1}^3a^{\ts}_{ll'}a^{\ts}_{l\hat l}\frac{\pa^2 z^{\ts}_{13}}{\pa x_{l'}\pa x_{\hat l}}\pa_yf (z^{\ts}\nabla)_1f;\\
\cK_9&=&-2\sum_{l=1}^2\sum_{l',\hat l=1}^3a^{\ts}_{ll'}a^{\ts}_{l\hat l}\frac{\pa z^{\ts}_{13}}{\pa x_{\hat l}}\frac{\pa z^{\ts}_{13}}{\pa x_{l'}} |\pa_yf|^2=-2\Gamma_1(\log g,\log g)|(z^{\ts}\nabla)_1f|^2;\\
\cK_{10}&=&-2 \sum_{l=1}^2\sum_{\hat l=1}^3(a^{\ts}\nabla)_l\log \rho^* a^{\ts}_{l\hat l}\frac{\pa z^{\ts}_{13}}{\pa x_{\hat l}}\pa_yf (z^{\ts}\nabla)_1f=-2\Gamma_1(\log \rho^*,\log g)|(z^{\ts}\nabla)_1f|^2.
\eeaa 
\end{proof}{}

\subsection{Proof Of Proposition \ref{prop Martinet}}
By direct computations, we have the following lemma.
\begin{lem}\label{martinet} For Martinet sub-Riemannian structure $(\mathbb M,\mathcal H, (aa^{\ts})^{\dd}_{|\mathcal{H}})$, we have
\beaa
Q&=&\left(
\begin{array}{ccccccccc}
 1 & 0 & \frac{y^2}{2} & 0 & 0 & 0 & \frac{y^2}{2} & 0 & \frac{y^4}{4} \\
 0 & 1 & 0 & 0 & 0 & 0 & 0 & \frac{y^2}{2} & 0 \\
 0 & 0 & 0 & 1 & 0 & \frac{y^2}{2} & 0 & 0 & 0 \\
 0 & 0 & 0 & 0 & 1 & 0 & 0 & 0 & 0 \\
\end{array}
\right);\\
P&=&\begin{pmatrix}
	0&0&0&0&0&0&1&0&y^2/2\\
	0&0&0&0&0&0&0&1&0
\end{pmatrix};\\
C^{\ts}&=&(0,0,0,0,0,\frac{y^3}{2}\pa_zf+y\pa_xf,-y\pa_yf,0,-\frac{y^3}{2}\pa_yf);\\
D^{\ts}&=&(0,0,y\pa_zf,0),\quad E^{\ts}=(0,0);\\
F^{\ts}&=&G^{\ts}=(0,0,0,0,0,0,0,0,0).
\eeaa
\end{lem}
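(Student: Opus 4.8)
The plan is to substitute the Martinet data
$a^{\ts}=\begin{pmatrix}1&0&y^2/2\\0&1&0\end{pmatrix}$ and $z^{\ts}=(0,0,1)$
directly into the definitions of $Q,P,C,D,E,F,G$ from Notation~\ref{notation}, exploiting two structural facts that collapse nearly every sum. First, $z^{\ts}$ is a constant matrix, so every summand containing a partial derivative of an entry of $z^{\ts}$ vanishes. Second, the only non-constant entry of $a^{\ts}$ is $a^{\ts}_{13}=y^2/2$, whose only nonzero first partial is $\partial_{x_2}a^{\ts}_{13}=y$ (here $x_1=x$, $x_2=y$, $x_3=z$); in particular $a^{\ts}$ does not depend on $x_3$, so $\partial_{x_3}a^{\ts}\equiv 0$, and the only nonzero second partial is $\partial_{x_2}^2 a^{\ts}_{13}=1$.

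First I would compute $Q$ and $P$. Since $Q_{ik\hat i\hat k}=a^{\ts}_{i\hat i}a^{\ts}_{k\hat k}$, each of the four rows of $Q$ is the vectorized outer product of the $i$th row of $a^{\ts}$ with the $k$th row; using the rows $(1,0,y^2/2)$ and $(0,1,0)$ together with the fixed ordering of the nine column indices $(\hat i,\hat k)$, $1\le\hat i,\hat k\le 3$, the four rows read off at once, and likewise the two rows of $P$ from $P_{ik\hat i\hat k}=z^{\ts}_{i\hat i}a^{\ts}_{k\hat k}$ with the row $z^{\ts}_{1\cdot}=(0,0,1)$. For $D_{ik}=\sum_{\hat i,\hat k}a^{\ts}_{i\hat i}(\partial_{x_{\hat i}}a^{\ts}_{k\hat k})\partial_{x_{\hat k}}f$ the only surviving index triple is $(k,\hat k,\hat i)=(1,3,2)$, contributing $a^{\ts}_{i2}\,y\,\partial_z f$; since $a^{\ts}_{12}=0$ and $a^{\ts}_{22}=1$ this gives $D^{\ts}=(0,0,y\partial_z f,0)$, while $E=0$ because $z^{\ts}$ is constant.

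Next I would dispose of $F$ and $G$. In $F_{\hat i\hat k}$ the first term carries $\partial_{x_{i'}}z^{\ts}_{k\hat k}=0$ and the second carries the factor $z^{\ts}_{ki'}$, which forces $i'=3$ and hence meets $\partial_{x_3}a^{\ts}_{i\hat i}=0$; so $F=0$. In $G_{\hat i\hat j}$ the first two terms contain the factor $z^{\ts}_{jj'}$ (forcing $j'=3$) times $\partial_{x_3}a^{\ts}=0$, and the last two contain derivatives of the constant matrix $z^{\ts}$; so $G=0$. Finally, for $C_{\hat i\hat k}$: in the first term $\partial_{x_{i'}}a^{\ts}_{k\hat k}\neq 0$ forces $(k,\hat k,i')=(1,3,2)$, after which the surviving factor $a^{\ts}_{i2}$ forces $i=2$, producing $y\,a^{\ts}_{2\hat i}(a^{\ts}\nabla)_1f$ in the slot $C_{\hat i 3}$; in the second term $\partial_{x_{i'}}a^{\ts}_{i\hat i}\neq 0$ forces $(i,\hat i,i')=(1,3,2)$, after which $a^{\ts}_{k2}$ forces $k=2$, producing $-y\,a^{\ts}_{1\hat k}(a^{\ts}\nabla)_2f$ in the slot $C_{3\hat k}$. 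Substituting $(a^{\ts}\nabla)_1f=\partial_x f+\tfrac{y^2}{2}\partial_z f$ and $(a^{\ts}\nabla)_2f=\partial_y f$ and listing the nine entries in the fixed order yields the stated $C^{\ts}$.

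I expect the main obstacle to be purely organizational: keeping the several multi-index orderings mutually consistent — the $n^2$, $nm$, and $(n+m)^2$ labelings indexing the rows of $Q$ and $P$, the vectorization $X$ of the Hessian, and the vectors $C,D,E,F,G$ — so that each nonzero entry lands in its correct slot. As a consistency check I would recompute the combination $(F+C+G+Q^{\ts}D+P^{\ts}E)^{\ts}X$ and verify that it reproduces exactly the first-order terms appearing after the completing-the-square step in the proof of Proposition~\ref{prop Martinet}.
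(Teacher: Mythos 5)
Your proposal is correct and is exactly the computation the paper intends: the lemma is stated as following "by direct computations," i.e.\ by substituting the Martinet matrices $a^{\ts}$ and $z^{\ts}$ into Notation~\ref{notation} and using that $z^{\ts}$ is constant and that the only nonvanishing derivative of $a^{\ts}$ is $\partial_y a^{\ts}_{13}=y$. Your index bookkeeping (in particular the placement of the nonzero entries of $C$ at the slots $(\hat i,\hat k)=(2,3),(3,1),(3,3)$) reproduces the stated vectors, so nothing further is needed.
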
{}

The proof follows from the following two lemmas. 
\begin{lem}\label{martinet hess} For Martinet sub-Riemannian structure, $F$ and $G$ are zero vectors, we have
\beaa
[QX+D]^{\ts}[QX+D]+[PX+E]^{\ts}[PX+E]+2C^{\ts}X=|\mathfrak{Hess}_{a,z}^{G}f|^2+\mathfrak{R}^G(\nabla f,\nabla f).
\eeaa
In particular, we have
\beaa
|\mathfrak{Hess}_{a,z}^{G}f|^2&=&[X+\Lambda_1]^{\ts}Q^{\ts}Q[X+\Lambda_1]+[X+\Lambda_2]^{\ts}P^{\ts}P[X+\Lambda_2];\\
\Lambda_1^{\ts}&=&(0,y\pa_zf/2,0,y\pa_zf/2,0,0,0,0,0); \\
\Lambda_2^{\ts}&=&(0,0,0,0,0,0,-y\pa_yf,\frac{y^3}{2}\pa_zf+y\pa_xf,0);\\
\mathfrak{R}^G(\nabla f,\nabla f)&=&\frac{y^2}{2}\Gamma_1^z(f,f)-y^2\Gamma_1(f,f).
\eeaa
\end{lem}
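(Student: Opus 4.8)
The plan is to follow the completing-the-square strategy already used for the Heisenberg group in Lemma~\ref{heisenberg hess}. First I would substitute the explicit data of Lemma~\ref{martinet} into the left-hand side. Since $F$ and $G$ vanish, the only cross term is $2C^{\ts}X$; reading off the three nonzero entries of $C$ against the matching slots of the vectorized Hessian $X$, and using that $\tfrac{y^3}{2}\pa_zf+y\pa_xf=y(a^{\ts}\nabla)_1f$, $\pa_yf=(a^{\ts}\nabla)_2f$, together with the symmetry of the Hessian, one gets
\beaa
2C^{\ts}X = 2y(a^{\ts}\nabla)_1f\,\Big(\frac{\pa^2 f}{\pa y\pa z}\Big) - 2y(a^{\ts}\nabla)_2f\,\Big(\frac{\pa^2 f}{\pa z\pa x}+\tfrac{y^2}{2}\frac{\pa^2 f}{\pa z\pa z}\Big).
\eeaa
Next I would expand $[QX+D]^{\ts}[QX+D]+[PX+E]^{\ts}[PX+E]$; with $D^{\ts}=(0,0,y\pa_zf,0)$, $E=0$, and the rows of $Q$, $P$ from Lemma~\ref{martinet}, this is a sum of six squares of second-derivative combinations — exactly the $(2,1)$ slot of $QX+D$ carries the extra first-order term $y\pa_zf$, while the two entries of $PX$ carry none.

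The key step is the completing of squares. Writing $u:=\frac{\pa^2 f}{\pa x\pa y}+\tfrac{y^2}{2}\frac{\pa^2 f}{\pa y\pa z}$ for the common value of the $(1,2)$ and $(2,1)$ entries of $QX$, their contribution is $u^2+(u+y\pa_zf)^2=2\big(u+\tfrac{y}{2}\pa_zf\big)^2+\tfrac{y^2}{2}|\pa_zf|^2$, which produces the shift $\tfrac{y}{2}\pa_zf$ in both slots (hence the entries of $\Lambda_1$) plus the residual $\tfrac{y^2}{2}\Gamma_1^z(f,f)$. Likewise, writing $w_1:=\frac{\pa^2 f}{\pa z\pa x}+\tfrac{y^2}{2}\frac{\pa^2 f}{\pa z\pa z}$ and $w_2:=\frac{\pa^2 f}{\pa y\pa z}$ for the two entries of $PX$, the identity
\beaa
w_1^2+w_2^2+2y(a^{\ts}\nabla)_1f\,w_2-2y(a^{\ts}\nabla)_2f\,w_1=\big(w_1-y(a^{\ts}\nabla)_2f\big)^2+\big(w_2+y(a^{\ts}\nabla)_1f\big)^2-y^2\Gamma_1(f,f)
\eeaa
absorbs precisely the cross term $2C^{\ts}X$, producing the shifts $-y\pa_yf$ and $y(a^{\ts}\nabla)_1f$ (hence the entries of $\Lambda_2$) plus the residual $-y^2\Gamma_1(f,f)$. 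The two squares coming from the $(1,1)$ and $(2,2)$ slots of $QX+D$ contain no first-order term and are left untouched.

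Finally I would recognize the collected squares as $[X+\Lambda_1]^{\ts}Q^{\ts}Q[X+\Lambda_1]+[X+\Lambda_2]^{\ts}P^{\ts}P[X+\Lambda_2]=|\mathfrak{Hess}_{a,z}^{G}f|^2$, by checking directly that $Q\Lambda_1$ has its only nonzero entries $\tfrac{y}{2}\pa_zf$ in the $(1,2)$ and $(2,1)$ slots and $P\Lambda_2$ has entries $-y\pa_yf$ and $y(a^{\ts}\nabla)_1f$, which match the shifts found above; the leftover quadratic form in $\nabla f$ is then $\tfrac{y^2}{2}\Gamma_1^z(f,f)-y^2\Gamma_1(f,f)=\mathfrak{R}^{G}(\nabla f,\nabla f)$. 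I expect the only delicate point to be the index bookkeeping in $2C^{\ts}X$ and in the two completing-square identities — in particular making sure the cross term from $C$ pairs with the $P$-block (not the $Q$-block) with the correct signs, so that the shifts come out exactly as $\Lambda_1,\Lambda_2$ and Assumption~\ref{assumption:main result} (which here reduces, in its action on $X$, to $Q^{\ts}Q\Lambda_1+P^{\ts}P\Lambda_2=C+Q^{\ts}D$) is satisfied automatically.
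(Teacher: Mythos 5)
Your proposal is correct and follows essentially the same route as the paper: expand $[QX+D]^{\ts}[QX+D]+[PX+E]^{\ts}[PX+E]+2C^{\ts}X$ using the explicit $Q,P,C,D$ from Lemma~\ref{martinet}, then complete squares, with the $Q$-block absorbing $D$ via $u^2+(u+y\pa_zf)^2=2(u+\tfrac{y}{2}\pa_zf)^2+\tfrac{y^2}{2}|\pa_zf|^2$ and the $P$-block absorbing $2C^{\ts}X$. Your version is in fact slightly more explicit than the paper's (which leaves that first identity implicit in its final equality), and your closing check that the shifts match $Q\Lambda_1$ and $P\Lambda_2$, so that Assumption~\ref{assumption:main result} holds when paired with the symmetric Hessian, is accurate.
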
{}
\begin{lem}\label{tensor martinet}
By routine computations, we obtain
\beaa
\mathfrak{R}_{ab}(\nabla f,\nabla f)&=&\frac{\pa f}{\pa z}(a^{\ts}\nabla )_1f+y(a^{\ts}\nabla)_1V \frac{\pa f}{\pa z}(a^{\ts}\nabla)_2f +y \frac{\pa V}{\pa z}(a^{\ts}\nabla)_1f(a^{\ts}\nabla)_2f \\
&&+\sum_{\hat i,k'=1}^3a^{\ts}_{1\hat i}a^{\ts}_{1k'}\frac{\pa^2 V}{\pa x_{\hat i}\pa x_{k'}}|(a^{\ts}\nabla)_1 f|^2+2(\frac{\pa^2 V}{\pa x\pa y}+\frac{y^2}{2}\frac{\pa^2 V}{\pa y \pa z} )(a^{\ts}\nabla)_1 f(a^{\ts}\nabla)_2 f\\
&&+\frac{\pa^2 V}{\pa y\pa y}|(a^{\ts}\nabla)_2 f|^2-y\frac{\pa V}{\pa y}\frac{\pa f}{\pa z}(a^{\ts}\nabla)_1f;\\
\mathfrak{R}_{zb}(\nabla f,\nabla f)&=& (\frac{\pa^2 V}{\pa x\pa z}+\frac{y^2}{2}\frac{\pa^2 V}{\pa z\pa z})(a^{\ts}\nabla)_1 f (z^{\ts}\nabla)_1 f+\frac{\pa^2 V}{\pa y\pa z}(a^{\ts}\nabla)_2 f(z^{\ts}\nabla)_1 f;\\
\mathfrak{R}_{\rho^*}(\nabla f,\nabla f)&=&0.
\eeaa
\end{lem}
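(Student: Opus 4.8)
The plan is to specialize the three curvature tensors $\mathfrak{R}_{ab}$, $\mathfrak{R}_{zb}$, $\mathfrak{R}_{\rho^*}$ of Theorem \ref{thm1} to the Martinet data recorded in Lemma \ref{martinet}, following the same template used in the proofs of Lemmas \ref{heisenberg tensor} and \ref{SE2 tensor}. Two structural features will drive every simplification. First, the only non-constant entry of $a^{\ts}$ is $a^{\ts}_{13}=\tfrac{y^2}{2}$, a function of $y=x_2$ alone, with $\pa_y a^{\ts}_{13}=y$ and $\pa^2_y a^{\ts}_{13}=1$ the only surviving partials; in particular no entry of $a^{\ts}$ depends on $x_3=z$, and $a^{\ts}_{22}=1$, $a^{\ts}_{12}=0$. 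Second, $z^{\ts}=(0,0,1)$ is a constant matrix, so $\pa z^{\ts}\equiv 0$.

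First I would treat $\mathfrak{R}_{ab}=\mathfrak{R}_a+\mathfrak{R}_b$. For the drift-free part $\mathfrak{R}_a=\cI_1+\cI_2+\cI_3+\cI_4$: in $\cI_1$ the factor $\tfrac{\pa a^{\ts}_{k\hat k}}{\pa x_{\hat i}}$ would have to be differentiated in a $z$-direction, which kills it; in $\cI_3$ and $\cI_4$ the only admissible index choice produces the factor $a^{\ts}_{12}=0$; and in $\cI_2$ the unique nonzero second derivative $\pa^2_y a^{\ts}_{13}=1$ forces $i'=\hat i=2$ and hence $i=2$, leaving $\cI_2=\pa_z f\,(a^{\ts}\nabla)_1 f$, the $V$-free term in the statement. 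For $\mathfrak{R}_b$ I would insert $b=-\tfrac{1}{2}aa^{\ts}\nabla V$ and expand into $\cJ_1+\cJ_2+\cJ_3+\cJ_4$ exactly as in Lemma \ref{heisenberg tensor}: the block $\cJ_3$, where the two rows of $a^{\ts}$ contract $\mathrm{Hess}\,V$, becomes $\sum_{i,k=1}^2\langle a^{\ts}_i, (\mathrm{Hess}\,V)a^{\ts}_k\rangle (a^{\ts}\nabla)_i f(a^{\ts}\nabla)_k f$, which evaluates to $\big(\tfrac{\pa^2 V}{\pa x\pa x}+y^2\tfrac{\pa^2 V}{\pa x\pa z}+\tfrac{y^4}{4}\tfrac{\pa^2 V}{\pa z\pa z}\big)|(a^{\ts}\nabla)_1 f|^2+2\big(\tfrac{\pa^2 V}{\pa x\pa y}+\tfrac{y^2}{2}\tfrac{\pa^2 V}{\pa y\pa z}\big)(a^{\ts}\nabla)_1 f(a^{\ts}\nabla)_2 f+\tfrac{\pa^2 V}{\pa y\pa y}|(a^{\ts}\nabla)_2 f|^2$; $\cJ_1$ and $\cJ_4$ each collapse to a single term (again by $a^{\ts}_{22}=1$, $a^{\ts}_{12}=0$) giving $y(a^{\ts}\nabla)_1V\,\pa_z f\,(a^{\ts}\nabla)_2 f$ and $-y\tfrac{\pa V}{\pa y}\pa_z f\,(a^{\ts}\nabla)_1 f$; and $\cJ_2$, which in the $\mathbb{H}^1$ computation cancels against a second contribution sourced by the entry $a^{\ts}_{23}$, has no partner here—the Martinet $a^{\ts}$ possesses a single $z$-adapted entry $a^{\ts}_{13}$—so it survives as $y\tfrac{\pa V}{\pa z}(a^{\ts}\nabla)_1 f(a^{\ts}\nabla)_2 f$. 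Summing $\cI_2+\cJ_1+\cJ_2+\cJ_3+\cJ_4$ then reproduces $\mathfrak{R}_{ab}$ exactly as stated.

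Next I would compute $\mathfrak{R}_{zb}$. The four geometric sums $\cI_1^z,\dots,\cI_4^z$ each carry a factor $\pa z^{\ts}$ or require differentiating an entry of $a^{\ts}$ in the $z$-direction, hence all vanish, and so does the ``$\pa z^{\ts}$'' part of the drift term; only $\cJ_3^z$ remains. With $b=-\tfrac{1}{2}aa^{\ts}\nabla V$ and $z^{\ts}_{1\hat i}\neq 0$ only at $\hat i=3$, the surviving sum reorganizes as $\sum_{k=1}^2 (a^{\ts}\nabla)_k(\pa_z V)\,(a^{\ts}\nabla)_k f\,(a^{\ts}\nabla)_1 f$, and using $(a^{\ts}\nabla)_1(\pa_z V)=\tfrac{\pa^2 V}{\pa x\pa z}+\tfrac{y^2}{2}\tfrac{\pa^2 V}{\pa z\pa z}$ and $(a^{\ts}\nabla)_2(\pa_z V)=\tfrac{\pa^2 V}{\pa y\pa z}$ yields precisely the stated $\mathfrak{R}_{zb}$. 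Finally, every summand of $\mathfrak{R}_{\rho^*}$ in Theorem \ref{thm1} either carries a derivative of $z^{\ts}$ (which vanishes) or, after contracting against the constant column $z^{\ts}=(0,0,1)$, forces a $z$-derivative of an entry of $a^{\ts}$ (which also vanishes, no entry of $a^{\ts}$ depending on $z$); hence $\mathfrak{R}_{\rho^*}=0$ — note that, although here the weighted volume $\Vol=e^{-y^2/2}$ is non-trivial, it enters this tensor only through those vanishing factors. I expect the only laborious point to be the index bookkeeping in $\mathfrak{R}_a$ and $\mathfrak{R}_b$ — identifying which of the many index combinations survive the sparsity of $a^{\ts}$ — and the one genuine novelty relative to $\mathbb{H}^1$ to be the non-cancellation of $\cJ_2$; there is no conceptual obstacle, the argument being structurally identical to Lemmas \ref{heisenberg tensor} and \ref{SE2 tensor}.
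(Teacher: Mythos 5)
Your proposal is correct and follows essentially the same route as the paper's own proof: decomposing $\mathfrak{R}_{ab}=\cI_1+\cdots+\cI_4+\cJ_1+\cdots+\cJ_4$ (with only $\cI_2,\cJ_1,\cJ_2,\cJ_3,\cJ_4$ surviving the sparsity of $a^{\ts}$), keeping only $\cJ_3^z$ in $\mathfrak{R}_{zb}$, and killing $\mathfrak{R}_{\rho^*}$ via the constancy of $z^{\ts}$ and the $z$-independence of $a^{\ts}$. Your observations about the non-cancellation of $\cJ_2$ (versus the Heisenberg case) and the irrelevance of the nontrivial volume $\Vol=e^{-y^2/2}$ to $\mathfrak{R}_{\rho^*}$ are both accurate.
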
{}
\begin{proof}[Proof of Lemma \ref{martinet hess}] Since $F$ and $G$ are zero vectors, we have
\beaa
2C^{\ts}X=2\Big[\frac{\pa^2 f}{\pa y\pa z}(\frac{y^3}{2}\pa_zf+y\pa_xf)-\frac{\pa^2 f}{\pa x \pa z}(y\pa_yf)-\frac{\pa^2 f}{\pa z\pa z}(\frac{y^3}{2}\pa_yf)\Big].
\eeaa
By routine computation, we observe that 
\beaa
&&[QX+D]^{\ts}[QX+D]+[PX+E]^{\ts}[PX+E]+2C^{\ts}X\\
&=&\left[\frac{\pa^2 f}{\pa x\pa x}+\frac{y^2}{2}\frac{\pa^2 f}{\pa x\pa z} +\frac{y^2}{2}\frac{\pa^2 f}{\pa z \pa x}+\frac{y^4}{4}\frac{\pa^2 f}{\pa z \pa z}\right]^2 +\left[\frac{\pa^2 f}{\pa y \pa x}+\frac{y^2}{2}\frac{\pa^2 f}{\pa z\pa y}+y\pa_zf\right]^2\\
&&+\left[\frac{\pa^2 f}{\pa y \pa x}+\frac{y^2}{2}\frac{\pa^2 f}{\pa z\pa y}\right]^2+\left[\frac{\pa^2 f}{\pa y\pa y} \right]^2\\
&&+\left[\frac{\pa^2 f}{\pa z\pa x}+\frac{y^2}{2}\frac{\pa^2 f}{\pa z\pa z} \right]^2+\left[ \frac{\pa^2 f}{\pa z\pa y}\right]^2\\
&&+2\frac{\pa^2 f}{\pa y\pa z}(\frac{y^3}{2}\pa_zf+y\pa_xf)-2\frac{\pa^2 f}{\pa x \pa z}(y\pa_yf)-2\frac{\pa^2 f}{\pa z\pa z}(\frac{y^3}{2}\pa_yf)\\
&=&\left[\frac{\pa^2 f}{\pa x\pa x}+\frac{y^2}{2}\frac{\pa^2 f}{\pa x\pa z} +\frac{y^2}{2}\frac{\pa^2 f}{\pa z \pa x}+\frac{y^4}{4}\frac{\pa^2 f}{\pa z \pa z}\right]^2 +\left[\frac{\pa^2 f}{\pa y \pa x}+\frac{y^2}{2}\frac{\pa^2 f}{\pa z\pa y}+y\pa_zf\right]^2\\
&&+\left[\frac{\pa^2 f}{\pa y \pa x}+\frac{y^2}{2}\frac{\pa^2 f}{\pa z\pa y}\right]^2+\left[\frac{\pa^2 f}{\pa y\pa y} \right]^2\\
&&+\left[\frac{\pa^2 f}{\pa z\pa x}+\frac{y^2}{2}\frac{\pa^2 f}{\pa z\pa z}-y\pa_y f \right]^2+\left[ \frac{\pa^2 f}{\pa z\pa y}+(\frac{y^3}{2}\pa_zf+y\pa_xf)\right]^2\\
&&-y^2|\pa_y f|^2-(\frac{y^3}{2}\pa_zf+y\pa_xf)^2\\
&=&|\mathfrak{Hess}_{a,z}^{G}f|^2+\frac{y^2}{2}\Gamma_1^z(f,f)-y^2\Gamma_1(f,f).
\eeaa 
The proof is thus completed.
\qed 
\end{proof}
We are now left to compute the three tensor terms.

\begin{proof}[Proof of Lemma \ref{tensor martinet}]
Similar to the proof of Lemma \ref{SE2 tensor}, we have 
\beaa
\mathfrak{R}_a(\nabla f,\nabla f)
&=&\sum_{i,k=1}^2\sum_{i',\hat i,\hat k=1}^{3} \la a^{\ts}_{ii'} (\frac{\partial a^{\ts}_{i \hat i}}{\partial x_{i'}} \frac{\partial a^{\ts}_{k\hat k}}{\partial x_{\hat i}}\frac{\partial f}{\partial x_{\hat k}}) ,(a^{\ts}\nabla)_kf\ra_{\hR^2}  \nonumber\\
&&+\sum_{i,k=2}^n\sum_{i',\hat i,\hat k=1}^{3} \la a^{\ts}_{ii'}a^{\ts}_{i \hat i} (\frac{\partial }{\partial x_{i'}} \frac{\partial a^{\ts}_{k\hat k}}{\partial x_{\hat i}})(\frac{\partial f}{\partial x_{\hat k}}) ,(a^{\ts}\nabla)_kf\ra_{\hR^2} \nonumber\\
&&-\sum_{i,k=1}^2\sum_{i',\hat i,\hat k=1}^{3} \la a^{\ts}_{k\hat k}\frac{\partial a^{\ts}_{ii'}}{\partial x_{\hat k}} \frac{\partial a^{\ts}_{i \hat i}}{\partial x_{i'}} \frac{\partial f}{\partial x_{\hat i}})  ,(a^{\ts}\nabla)_kf\ra_{\hR^2}  \nonumber\\
&&-\sum_{i,k=1}^2\sum_{i',\hat i,\hat k=1}^{3} \la a^{\ts}_{k\hat k} a^{\ts}_{ii'} (\frac{\partial }{\partial x_{\hat k}} \frac{\partial a^{\ts}_{i \hat i}}{\partial x_{i'}}) \frac{\partial f}{\partial x_{\hat i}}  ,(a^{\ts}\nabla)_kf\ra_{\hR^2},  \nonumber\\
&=& \cI_1+\cI_2+\cI_3+\cI_4.
\eeaa
By direct computations, we have
\beaa
\cI_1&=&\sum_{i=1}^2\sum_{i',\hat i,\hat k=1}^{3} \Big[ a^{\ts}_{ii'} (\frac{\partial a^{\ts}_{i \hat i}}{\partial x_{i'}} \frac{\partial a^{\ts}_{1\hat k}}{\partial x_{\hat i}}\frac{\partial f}{\partial x_{\hat k}})(a^{\ts}\nabla)_1f+  a^{\ts}_{ii'} (\frac{\partial a^{\ts}_{i \hat i}}{\partial x_{i'}} \frac{\partial a^{\ts}_{2\hat k}}{\partial x_{\hat i}}\frac{\partial f}{\partial x_{\hat k}}) (a^{\ts}\nabla)_2f\Big]=0;\\
\cI_2&=&\sum_{i=2}^n\sum_{i',\hat i,\hat k=1}^{3}\Big[  a^{\ts}_{ii'}a^{\ts}_{i \hat i} (\frac{\partial }{\partial x_{i'}} \frac{\partial a^{\ts}_{1\hat k}}{\partial x_{\hat i}})(\frac{\partial f}{\partial x_{\hat k}}) (a^{\ts}\nabla)_1f+ a^{\ts}_{ii'}a^{\ts}_{i \hat i} (\frac{\partial }{\partial x_{i'}} \frac{\partial a^{\ts}_{2\hat k}}{\partial x_{\hat i}})(\frac{\partial f}{\partial x_{\hat k}})(a^{\ts}\nabla)_2f\Big]\\
&=&a^{\ts}_{22}a^{\ts}_{22}\frac{\pa^2}{\pa y \pa y}a^{\ts}_{13}\frac{\pa f}{\pa z}(a^{\ts}\nabla )_1f=\frac{\pa f}{\pa z}(a^{\ts}\nabla )_1f ;  \\
\cI_3&=&-\sum_{i=1}^2\sum_{i',\hat i,\hat k=1}^{3} \Big[  a^{\ts}_{1\hat k}\frac{\partial a^{\ts}_{ii'}}{\partial x_{\hat k}} \frac{\partial a^{\ts}_{i \hat i}}{\partial x_{i'}} \frac{\partial f}{\partial x_{\hat i}})(a^{\ts}\nabla)_1f+  a^{\ts}_{2\hat k}\frac{\partial a^{\ts}_{ii'}}{\partial x_{\hat k}} \frac{\partial a^{\ts}_{i \hat i}}{\partial x_{i'}} \frac{\partial f}{\partial x_{\hat i}})(a^{\ts}\nabla)_2f\Big]=0;\\
\cI_4&=&-\sum_{i=1}^2\sum_{i',\hat i,\hat k=1}^{3} \Big[  a^{\ts}_{1\hat k} a^{\ts}_{ii'} (\frac{\partial }{\partial x_{\hat k}} \frac{\partial a^{\ts}_{i \hat i}}{\partial x_{i'}}) \frac{\partial f}{\partial x_{\hat i}}(a^{\ts}\nabla)_1f+  a^{\ts}_{2\hat k} a^{\ts}_{ii'} (\frac{\partial }{\partial x_{\hat k}} \frac{\partial a^{\ts}_{i \hat i}}{\partial x_{i'}}) \frac{\partial f}{\partial x_{\hat i}}(a^{\ts}\nabla)_2f\Big]=0.
\eeaa
For the drift term, we take $b=-\frac{1}{2}aa^{\ts}\nabla V$, we have
\beaa
\mathfrak{R}_b
&=&\sum_{i,k=1}^2\sum_{\hat i,\hat k,k'=1}^3\left[a^{\ts}_{i\hat i}\frac{\pa a^{\ts}_{k\hat k} }{\pa x_{\hat i}}a^{\ts}_{kk'}\frac{\pa V}{\pa x_{k'}} \frac{\pa f}{\pa x_{\hat k}}(a^{\ts}\nabla)_i f +a^{\ts}_{i\hat i}\frac{\pa a^{\ts}_{kk'}}{\pa x_{\hat i}}a^{\ts}_{k\hat k}\frac{\pa V}{\pa x_{k'}} \frac{\pa f}{\pa x_{\hat k}}(a^{\ts}\nabla)_i f\right]\\
&&+\sum_{i,k=1}^2\sum_{\hat i,\hat k,k'=1}^3\left[ a^{\ts}_{i\hat i}a^{\ts}_{k\hat k}a^{\ts}_{kk'}\frac{\pa^2 V}{\pa x_{\hat i}\pa x_{k'}}\frac{\pa f}{\pa x_{\hat k}}(a^{\ts}\nabla)_i f\right]\\
&&-\sum_{i,k=1}^2\sum_{\hat i,\hat k,k'=1}^3\left[a^{\ts}_{k\hat k}a^{\ts}_{kk'}\frac{\pa a^{\ts}_{i\hat i}}{\pa x_{\hat k}}\frac{\pa V}{\pa x_{k'}} \frac{\pa f}{\pa x_{\hat i}}(a^{\ts}\nabla)_if\right] \\
&=& \cJ_1+\cJ_2+\cJ_3+\cJ_4.
\eeaa
Plugging into the matrices of $a^{\ts}$ and $z^{\ts}$, we get
\beaa
\cJ_1
&=&\sum_{\hat i,\hat k,k'=1}^3\left[ a^{\ts}_{1\hat i}\frac{\pa a^{\ts}_{1\hat k} }{\pa x_{\hat i}}a^{\ts}_{1k'}\frac{\pa V}{\pa x_{k'}} \frac{\pa f}{\pa x_{\hat k}}(a^{\ts}\nabla)_1 f+a^{\ts}_{2\hat i}\frac{\pa a^{\ts}_{1\hat k} }{\pa x_{\hat i}}a^{\ts}_{1k'}\frac{\pa V}{\pa x_{k'}} \frac{\pa f}{\pa x_{\hat k}}(a^{\ts}\nabla)_2 f\right]\\
&&+\sum_{\hat i,\hat k,k'=1}^3\left[ a^{\ts}_{1\hat i}\frac{\pa a^{\ts}_{2\hat k} }{\pa x_{\hat i}}a^{\ts}_{2k'}\frac{\pa V}{\pa x_{k'}} \frac{\pa f}{\pa x_{\hat k}}(a^{\ts}\nabla)_1 f+a^{\ts}_{2\hat i}\frac{\pa a^{\ts}_{2\hat k} }{\pa x_{\hat i}}a^{\ts}_{2k'}\frac{\pa V}{\pa x_{k'}} \frac{\pa f}{\pa x_{\hat k}}(a^{\ts}\nabla)_2 f\right]\\
&=& a^{\ts}_{22}\frac{\pa a^{\ts}_{13}}{\pa y}(a^{\ts}\nabla)_1V \frac{\pa f}{\pa z}(a^{\ts}\nabla)_2f=y(a^{\ts}\nabla)_1V \frac{\pa f}{\pa z}(a^{\ts}\nabla)_2f;
\eeaa
\beaa
\cJ_2
&=&\sum_{\hat i,\hat k,k'=1}^3\left[a^{\ts}_{1\hat i}\frac{\pa a^{\ts}_{1k'}}{\pa x_{\hat i}}a^{\ts}_{1\hat k}\frac{\pa V}{\pa x_{k'}} \frac{\pa f}{\pa x_{\hat k}}(a^{\ts}\nabla)_1 f+a^{\ts}_{2\hat i}\frac{\pa a^{\ts}_{1k'}}{\pa x_{\hat i}}a^{\ts}_{1\hat k}\frac{\pa V}{\pa x_{k'}} \frac{\pa f}{\pa x_{\hat k}}(a^{\ts}\nabla)_2 f\right]\\
&&+\sum_{\hat i,\hat k,k'=1}^3\left[a^{\ts}_{1\hat i}\frac{\pa a^{\ts}_{2k'}}{\pa x_{\hat i}}a^{\ts}_{2\hat k}\frac{\pa V}{\pa x_{k'}} \frac{\pa f}{\pa x_{\hat k}}(a^{\ts}\nabla)_1 f+a^{\ts}_{2\hat i}\frac{\pa a^{\ts}_{2k'}}{\pa x_{\hat i}}a^{\ts}_{2\hat k}\frac{\pa V}{\pa x_{k'}} \frac{\pa f}{\pa x_{\hat k}}(a^{\ts}\nabla)_2 f\right]\\
&=&y \frac{\pa V}{\pa z}(a^{\ts}\nabla)_1f(a^{\ts}\nabla)_2f;
\eeaa
\beaa
\cJ_3
&=&\sum_{\hat i,k'=1}^3\left[ a^{\ts}_{1\hat i}a^{\ts}_{1k'}\frac{\pa^2 V}{\pa x_{\hat i}\pa x_{k'}} |(a^{\ts}\nabla)_1 f|^2+a^{\ts}_{2\hat i}a^{\ts}_{1k'}\frac{\pa^2 V}{\pa x_{\hat i}\pa x_{k'}}(a^{\ts}\nabla)_1 f(a^{\ts}\nabla)_2 f\right]\\
&&+\sum_{\hat i,k'=1}^3\left[ a^{\ts}_{1\hat i}a^{\ts}_{2k'}\frac{\pa^2 V}{\pa x_{\hat i}\pa x_{k'}}(a^{\ts}\nabla)_2 f(a^{\ts}\nabla)_1 f+a^{\ts}_{2\hat i}a^{\ts}_{2k'}\frac{\pa^2 V}{\pa x_{\hat i}\pa x_{k'}}|(a^{\ts}\nabla)_2 f|^2\right]\\
&=&\sum_{\hat i,k'=1}^3a^{\ts}_{1\hat i}a^{\ts}_{1k'}\frac{\pa^2 V}{\pa x_{\hat i}\pa x_{k'}}|(a^{\ts}\nabla)_1 f|^2+2(\frac{\pa^2 V}{\pa x\pa y}+\frac{y^2}{2}\frac{\pa^2 V}{\pa y \pa z} )(a^{\ts}\nabla)_1 f(a^{\ts}\nabla)_2 f\\
&&+\frac{\pa^2 V}{\pa y\pa y}|(a^{\ts}\nabla)_2 f|^2;
\eeaa

\beaa
\cJ_4
&=&-\sum_{\hat i,\hat k,k'=1}^3\left[a^{\ts}_{1\hat k}a^{\ts}_{1k'}\frac{\pa a^{\ts}_{1\hat i}}{\pa x_{\hat k}}\frac{\pa V}{\pa x_{k'}} \frac{\pa f}{\pa x_{\hat i}}(a^{\ts}\nabla)_1f+a^{\ts}_{1\hat k}a^{\ts}_{1k'}\frac{\pa a^{\ts}_{2\hat i}}{\pa x_{\hat k}}\frac{\pa V}{\pa x_{k'}} \frac{\pa f}{\pa x_{\hat i}}(a^{\ts}\nabla)_2f\right]\\
&&-\sum_{\hat i,\hat k,k'=1}^3\left[a^{\ts}_{2\hat k}a^{\ts}_{2k'}\frac{\pa a^{\ts}_{1\hat i}}{\pa x_{\hat k}}\frac{\pa V}{\pa x_{k'}} \frac{\pa f}{\pa x_{\hat i}}(a^{\ts}\nabla)_1f+a^{\ts}_{2\hat k}a^{\ts}_{2k'}\frac{\pa a^{\ts}_{2\hat i}}{\pa x_{\hat k}}\frac{\pa V}{\pa x_{k'}} \frac{\pa f}{\pa x_{\hat i}}(a^{\ts}\nabla)_2f\right]\\
&=&-y\frac{\pa V}{\pa y}\frac{\pa f}{\pa z}(a^{\ts}\nabla)_1f.
\eeaa
 Combing the above computations, we get the tensor $\mathfrak{R}_{ab}$. Now we turn to the second tensor $\mathfrak{R}_{zb}$.
Since $z^{\ts}=(0,0,1)$, it is obvious to see that only the drift term of the tensor $\mathfrak{R}_{zb}$ remains, where we denote
\beaa
\mathfrak{R}_{zb}(\nabla f,\nabla f)=-2\sum_{\hat i,\hat k=1}^{n+m} (z^T_{1\hat i}\frac{\pa b_{\hat k}}{\pa x_{\hat i}}\frac{\pa f}{\pa x_{\hat k}}-b_{\hat k}\frac{\pa z^T_{1\hat i}}{\pa x_{\hat k}} \frac{\pa f}{\pa x_{\hat i}} )(z^T\nabla )_1f.
\eeaa
By taking $b=-\frac{1}{2}aa^{\ts}\nabla V$, we further obtain that
\beaa
\mathfrak{R}_{zb}(\nabla f,\nabla f)
&=&-2\sum_{\hat i,\hat k=1}^{3}\left[ z^T_{1\hat i}\frac{\pa b_{\hat k}}{\pa x_{\hat i}}\frac{\pa f}{\pa x_{\hat k}}(z^T\nabla f)_1-b_{\hat k}\frac{\pa z^T_{1\hat i}}{\pa x_{\hat k}} \frac{\pa f}{\pa x_{\hat i}} (z^T\nabla f)_1\right]\\
&=&\sum_{k=1}^2\sum_{\hat i,\hat k,k'=1}^3\left[z^{\ts}_{1\hat i}\frac{\pa a^{\ts}_{k\hat k} }{\pa x_{\hat i}}a^{\ts}_{kk'}\frac{\pa V}{\pa x_{k'}} \frac{\pa f}{\pa x_{\hat k}}(z^{\ts}\nabla)_1 f\right]\\
&&+\sum_{k=1}^2\sum_{\hat i,\hat k,k'=1}^3\left[z^{\ts}_{1\hat i}\frac{\pa a^{\ts}_{kk'}}{\pa x_{\hat i}}a^{\ts}_{k\hat k}\frac{\pa V}{\pa x_{k'}} \frac{\pa f}{\pa x_{\hat k}}(z^{\ts}\nabla)_1 f\right]\\ 
&&+\sum_{k=1}^2\sum_{\hat i,\hat k,k'=1}^3\left[ z^{\ts}_{1\hat i}a^{\ts}_{k\hat k}a^{\ts}_{kk'}\frac{\pa^2 V}{\pa x_{\hat i}\pa x_{k'}}\frac{\pa f}{\pa x_{\hat k}}(z^{\ts}\nabla)_1 f\right]\\
&&-\sum_{k=1}^2\sum_{\hat i,\hat k,k'=1}^3\left[a^{\ts}_{k\hat k}a^{\ts}_{kk'}\frac{\pa z^{\ts}_{1\hat i}}{\pa x_{\hat k}}\frac{\pa V}{\pa x_{k'}} \frac{\pa f}{\pa x_{\hat i}}(z^{\ts}\nabla)_1f\right] \\
&=& \cJ_1^z+\cJ_2^z+\cJ_3^z+\cJ_4^z.
\eeaa
By direct computations, it is not hard to observe that
\beaa
\cJ_1^z&=&\sum_{k=1}^2\sum_{\hat i,\hat k,k'=1}^3\left[z^{\ts}_{1\hat i}\frac{\pa a^{\ts}_{k\hat k} }{\pa x_{\hat i}}a^{\ts}_{kk'}\frac{\pa V}{\pa x_{k'}} \frac{\pa f}{\pa x_{\hat k}}(a^{\ts}\nabla)_1 f \right]=0;\\
\cJ_2^z&=&\sum_{k=1}^2\sum_{\hat i,\hat k,k'=1}^3\left[z^{\ts}_{1\hat i}\frac{\pa a^{\ts}_{kk'}}{\pa x_{\hat i}}a^{\ts}_{k\hat k}\frac{\pa V}{\pa x_{k'}} \frac{\pa f}{\pa x_{\hat k}}(a^{\ts}\nabla)_1 f\right]=0;\\
\cJ_4^z&=&-\sum_{k=1}^2\sum_{\hat i,\hat k,k'=1}^3\left[a^{\ts}_{k\hat k}a^{\ts}_{kk'}\frac{\pa z^{\ts}_{1\hat i}}{\pa x_{\hat k}}\frac{\pa V}{\pa x_{k'}} \frac{\pa f}{\pa x_{\hat i}}(a^{\ts}\nabla)_1f\right]=0.
\eeaa
The only non-zero term has the following form,
\beaa
\cJ_3^z&=&\sum_{k=1}^2\sum_{\hat i,\hat k,k'=1}^3\left[ z^{\ts}_{1 \hat i}a^{\ts}_{k\hat k}a^{\ts}_{kk'}\frac{\pa^2 V}{\pa x_{\hat i}\pa x_{k'}}\frac{\pa f}{\pa x_{\hat k}}(z^{\ts}\nabla)_1 f\right]\\
&=&\sum_{\hat i,\hat k,k'=1}^3\left[ z^{\ts}_{1\hat i}a^{\ts}_{1\hat k}a^{\ts}_{1k'}\frac{\pa^2 V}{\pa x_{\hat i}\pa x_{k'}}\frac{\pa f}{\pa x_{\hat k}}(z^{\ts}\nabla)_1 f+ z^{\ts}_{1\hat i}a^{\ts}_{2\hat k}a^{\ts}_{2k'}\frac{\pa^2 V}{\pa x_{\hat i}\pa x_{k'}}\frac{\pa f}{\pa x_{\hat k}}(z^{\ts}\nabla)_1 f\right]\\
&=&\sum_{\hat i,k'=1}^3\left[ z^{\ts}_{1\hat i}a^{\ts}_{1k'}\frac{\pa^2 V}{\pa x_{\hat i}\pa x_{k'}} (a^{\ts}\nabla)_1 f(z^{\ts}\nabla)_1 f+ z^{\ts}_{1\hat i}a^{\ts}_{2k'}\frac{\pa^2 V}{\pa x_{\hat i}\pa x_{k'}}(a^{\ts}\nabla)_2 f(z^{\ts}\nabla)_1 f\right]\\
&=&(\frac{\pa^2 V}{\pa x\pa z}+\frac{y^2}{2}\frac{\pa^2 V}{\pa z\pa z})(a^{\ts}\nabla)_1 f(z^{\ts}\nabla)_1 f+\frac{\pa^2 V}{\pa y\pa z}(a^{\ts}\nabla)_2 f(z^{\ts}\nabla)_1 f.
\eeaa
Since matrix $z^{\ts}$ is a constant matrix and matrix $a^{\ts}$ contains only variable $y$, it is easy to observe that 
\[
\mathfrak{R}_{\rho^*}(\nabla f,\nabla f)=0.
\]
\qed 
\end{proof}{}

\section*{Acknowledgement} The authors would like to thank Professor Fabrice Baudoin for many helpful discussions and valuable suggestions. The authors would also like to thank Luca Rizzi for helpful discussion on Popp's volume.


\begin{thebibliography}{10}

\bibitem{agrachev2009optimal}
A.~Agrachev and P.~Lee.
\newblock Optimal transportation under nonholonomic constraints.
\newblock {\em Transactions of the American Mathematical Society}, 361(11):6019--6047, 2009.

\bibitem{bakryemery1985}
D.~Bakry and M.~{\'E}mery.
\newblock Diffusions hypercontractives.
\newblock {\em S{\'e}minaire de Probabilit{\'e}s XIX 1983/84},
177--206. Springer, 1985.

\bibitem{barilari2013formula}
D.~Barilari and L.~Rizzi.
\newblock A formula for Popp’s volume in sub-Riemannian geometry.
\newblock {\em Analysis and Geometry in Metric Spaces},1:42--57, 2013.

\bibitem{barilari2019bakry}
D.~Barilari and L.~Rizzi.
\newblock Bakry-\'{E}mery curvature and model spaces in sub-Riemannian
  geometry.
\newblock {\em Mathematische Annalen}, 377(1-2), 435-–482, 
  2020.

\bibitem{baudoin2016wasserstein}
F.~Baudoin.
\newblock Wasserstein contraction properties for hypoelliptic diffusions.
\newblock {\em arXiv preprint arXiv:1602.04177}, 2016.

\bibitem{Baudoin2017}
F.~Baudoin.
\newblock Bakry--{\'E}mery meet Villani.
\newblock {\em Journal of Functional Analysis}, 273(7): 2275--2291, 2017.

\bibitem{baudoin2012log}
F.~Baudoin and M.~Bonnefont.
\newblock Log-sobolev inequalities for subelliptic operators satisfying a
  generalized curvature dimension inequality.
\newblock {\em Journal of Functional Analysis}, 262(6): 2646--2676, 2012.

\bibitem{BBG}
F.~Baudoin, M.~Bonnefont, and N.~Garofalo.
\newblock A sub-{{Riemannian}} curvature-dimension inequality, volume doubling
  property and the {{Poincar\'e}} inequality.
\newblock {\em Mathematische Annalen}, 358(3-4): 833--860, 2014.

\bibitem{baudoin2015subelliptic}
F.~Baudoin and M.~Cecil.
\newblock The subelliptic heat kernel on the three-dimensional solvable lie groups.
\newblock {\em 
Forum Mathematicum}, 27(4):2051--2086, 2015.

\bibitem{baudoin2015log}
F.~Baudoin and Q.~Feng.
\newblock Log-sobolev inequalities on the horizontal path space of a totally
  geodesic foliation.
\newblock {\em arXiv preprint arXiv:1503.08180}, 2015.

\bibitem{BaudoinGarofalo09}
F.~Baudoin and N.~Garofalo.
\newblock Curvature-dimension inequalities and \textsc{R}icci lower bounds for
  sub-Riemannian manifolds with transverse symmetries.
\newblock {\em Journal of the European Mathematical Society,} 19(1): 151--219, 2017.

\bibitem{baudoin2019gamma}
F.~Baudoin, M.~Gordina, and D.~P. Herzog.
\newblock Gamma calculus beyond \textsc{V}illani and explicit convergence
  estimates for langevin dynamics with singular potentials.
\newblock {\em arXiv preprint arXiv:1907.03092}, 2019.

\bibitem{BGK}
F.~Baudoin, E.~Grong, K.~Kuwada, and A.~Thalmaier.
\newblock Sub-{{Laplacian}} comparison theorems on totally geodesic
  {{Riemannian}} foliations.
\newblock {\em arXiv:1706.08489}, 2017.

\bibitem{baudoinwang2012}
F.~Baudoin and J.~Wang.
\newblock Curvature dimension inequalities and subelliptic heat kernel gradient bounds on contact manifolds.
\newblock {\em Potential Analysis},  40(2): 163–193. 2014.

\bibitem{Feng}
Q.~Feng.
\newblock Harnack inequalities on totally geodesic foliations with transverse
  {{Ricci}} flow.
\newblock {\em arXiv:1712.02275}, 2017.

\bibitem{FL}
Q.~Feng and W.~Li.
\newblock Generalized gamma $ z $ calculus via sub-Riemannian density manifold.
\newblock {\em arXiv preprint arXiv:1910.07480}, 2019.

\bibitem{GL2016}
M.~Gordina and T.~Laetsch.
\newblock Sub-laplacians on sub-Riemannian manifolds.
\newblock {\em Potential Analysis}, 44(4): 811--837, 2016.

\bibitem{hebisch2009coercive}
W. Hebisch and B. Zegarlinski.
\newblock Coercive inequalities on metric measure spaces.
\newblock {\em Journal of Functional Analysis}, 258(3): 814--851, 2010.



\bibitem{inglis2009logarithmic}
J. Inglis and I. Papageorgiou.
\newblock Logarithmic Sobolev inequalities for infinite dimensional H{\"o}rmander type generators on the Heisenberg group.
\newblock {\em Potential Analysis}, 31(1): 79--102, 2009.




\bibitem{jungel2016entropy}
A.~J{\"u}ngel.
\newblock {\em Entropy methods for diffusive partial differential equations}.
\newblock Springer, 2016.


\bibitem{KL}
B.~Khesin and P.~Lee.
\newblock A nonholonomic {{Moser}} theorem and optimal transport.
\newblock {\em Journal of Symplectic Geometry}, 7(4): 381--414, 2009.

\bibitem{LiG}
W. Li.
\newblock Geometry of probability simplex via optimal transport.
\newblock {\em arXiv:1803.06360}, 2018.

\bibitem{Li2019_diffusion}
W. Li.
\newblock Diffusion Hypercontractivity via Generalized Density
  Manifold.
\newblock {\em arXiv:1907.12546}, 2019.

\bibitem{MV}
P. A. Markowich, C. Villani.
\newblock{On The Trend To Equilibrium For The Fokker-Planck Equation: An Interplay Between Physics And Functional Analysis.} 
\newblock{\em Physics and Functional Analysis}, Matematica Contemporanea, 1999. 

\bibitem{OV}
F.~Otto and C.~Villani. 
\newblock{Generalization of an inequality by Talagrand and links with the logarithmic Sobolev inequality.}
\newblock{\em Journal of Functional Analysis}, 173 (2): 361--400, 2000.

\bibitem{wang1997logarithmic}
F.-Y. WANG.
\newblock Logarithmic sobolev inequalities on noncompact Riemannian manifolds.
\newblock {\em Probability Theory and Related Fields}, 109(3): 417--424, 1997.


\end{thebibliography}
\end{document}